\newcommand*{\mailto}[1]{\href{mailto:#1}{\nolinkurl{#1}}}
\newcommand{\arxiv}[1]{\href{http://arxiv.org/abs/#1}{arXiv:#1}}
\newcommand{\msc}[1]{\href{http://www.ams.org/msc/msc2010.html?t=&s=#1}{#1}}
\newtheorem{theorem}{Theorem}[section]
\newtheorem{lemma}[theorem]{Lemma}
\newtheorem{corollary}[theorem]{Corollary}
\newtheorem{remark}[theorem]{Remark}
\newcommand{\R}{{\mathbb R}}
\newcommand{\N}{{\mathbb N}}
\newcommand{\Z}{{\mathbb Z}}
\newcommand{\C}{{\mathbb C}}
\newcommand{\bS}{{\mathbb S}}
\newcommand{\hr}{{\mathfrak H}}
\newcommand{\cRH}{{\mathcal R}_{H}}
\newcommand{\cRl}{{\mathcal R}_{l}}
\newcommand{\id}{{\mathbbm{1}}}
\newcommand{\OO}{{\mathcal O}}
\newcommand{\loc}{\mathrm{loc}}
\newcommand{\I}{\mathrm{i}}
\newcommand{\E}{\mathrm{e}}
\newcommand{\vx}{\mathbf{x}}
\newcommand{\vy}{\mathbf{y}}
\DeclareMathOperator{\re}{Re}
\DeclareMathOperator{\im}{Im}
\DeclareMathOperator{\sign}{sign}
\DeclareMathOperator*{\meanlim}{l.\! i.\! m.}
\newcommand{\nn}{\nonumber}
\newcommand{\be}{\begin{equation}}
\newcommand{\ee}{\end{equation}}
\newcommand{\ti}{\tilde}
\newcommand{\abs}[1]{\left\lvert #1 \right\rvert}
\newcommand{\abss}[1]{\lvert #1 \rvert}
\newcommand{\norm}[1]{\left\lVert #1 \right\rVert}
\newcommand{\inner}[2]{\left\langle#1,#2\right\rangle}
\newcommand{\floor}[1]{\lfloor #1 \rfloor}
\newcommand{\vphi}{\varphi}
\newcommand{\lam}{\lambda}
\newcommand{\gam}{\gamma}
\numberwithin{equation}{section}
\newcommand{\dlmf}[1]{%
\cite[%
  \def\nextitem{\def\nextitem{, }}%
  \@for \el:=#1\do{\nextitem\href{http://dlmf.nist.gov/\el}{(\el)}}%
]{dlmf}%
}
\begin{document}

\title{Dispersion Estimates for Spherical Schr\"odinger Equations}

\author[A. Kostenko]{Aleksey Kostenko}
\address{Faculty of Mathematics\\ University of Vienna\\
Oskar-Morgenstern-Platz 1\\ 1090 Wien\\ Austria}
\email{\mailto{duzer80@gmail.com};\mailto{Oleksiy.Kostenko@univie.ac.at}}
\urladdr{\url{http://www.mat.univie.ac.at/~kostenko/}}

\author[G. Teschl]{Gerald Teschl}
\address{Faculty of Mathematics\\ University of Vienna\\
Oskar-Morgenstern-Platz 1\\ 1090 Wien\\ Austria\\ and International 
Erwin Schr\"odinger Institute for Mathematical Physics\\ 
Boltzmanngasse 9\\ 1090 Wien\\ Austria}
\email{\mailto{Gerald.Teschl@univie.ac.at}}
\urladdr{\url{http://www.mat.univie.ac.at/~gerald/}}

\author[J. H. Toloza]{Julio H. Toloza}
\address{Consejo Nacional de Investigaciones Cient\'{i}ficas y 
T\'{e}cnicas (CONICET)\\ and Centro de Investigaci\'{o}n en 
Inform\'{a}tica para la Ingenier\'{i}a\\ Universidad Tecnol\'{o}gica 
Nacional -- Facultad Regional C\'{o}rdoba\\ Maestro L\'{o}pez s/n\\
X5016ZAA C\'{o}rdoba, Argentina}
\email{\mailto{jtoloza@scdt.frc.utn.edu.ar}}

\thanks{Ann. Henri Poincar\'e {\bf 17}, 3147--3176 (2016).}
\thanks{{\it Research supported by the Austrian Science Fund (FWF) 
under Grants No.\ Y330, P26060 and by the National Scientific and 
Technical Research Council (CONICET, Argentina).}}

\keywords{Schr\"odinger equation, dispersive estimates, scattering}
\subjclass[2010]{Primary \msc{35Q41}, \msc{34L25}; Secondary \msc{81U30}, \msc{81Q15}}

\begin{abstract}
We derive a dispersion estimate for one-dimensional perturbed radial Schr\"o\-din\-ger operators.
We also derive several new estimates for solutions of the underlying differential equation and
investigate the behavior of the Jost function near the edge of the continuous spectrum.
\end{abstract}

\maketitle

\section{Introduction}

We are concerned with the one-dimensional Schr\"odinger equation
\begin{equation} \label{Schr}
  \I \dot \psi(t,x) = H \psi(t,x), \quad 
  H:= - \frac{d^2}{dx^2} + \frac{l(l+1)}{x^2} + q(x),\quad 
  (t,x)\in\R\times \R_+,
\end{equation}
with real integrable potential $q$ and with the angular momentum 
$l>-\frac{1}{2}$. We will use $\tau$ to describe the formal Sturm--Liouville differential expression and
$H$ the self-adjoint operator acting in $L^2(\R_+)$ and given by $\tau$ together with the usual boundary condition at $x=0$:
\be\label{eq:bc}
\lim_{x\to0} x^l ( (l+1)f(x) - x f'(x))=0, \qquad l\in\Big(-\frac{1}{2},\frac{1}{2}\Big).
\ee

More specifically, our goal is to provide dispersive 
decay estimates for these equations. To this end we
recall (e.g., \cite[Sect.~9.7]{tschroe}) that for $\int_0^\infty x |q(x)| dx<\infty$ 
the operator $H$ has a purely absolutely continuous spectrum
on $(0,\infty)$ plus a finite number of eigenvalues in $(-\infty,0]$. 
At the edge of the continuous spectrum there could be a resonance (or an eigenvalue if $l>\frac{1}{2}$). Various equivalent definitions of
what is meant by a resonance in this setting will be given in Lemma~\ref{lem:resonance}.
Then our main result read as follows:
\begin{theorem}\label{Main}
Assume that 
\be\label{eq:q_hyp}
\int_0^1 |q(x)| dx <\infty \quad \text{and} \quad  
\int_1^\infty x^{\max(2,l+1)} |q(x)| dx <\infty,
\ee
and suppose there is neither a resonance nor an eigenvalue at $0$.
Then the following decay holds
\begin{equation}\label{full}
\norm{\E^{-\I tH}P_c(H)}_{L^1(\R_+)\to L^\infty(\R_+)}
	= \mathcal{O}(|t|^{-1/2}),
\quad t\to\infty.
\end{equation}
Here $P_c(H)$ is the orthogonal projection in $L^2(\R_+)$ onto 
the continuous spectrum of $H$.
\end{theorem}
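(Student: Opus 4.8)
The plan is the standard route to dispersive estimates --- Stone's formula, passage to an oscillatory integral in the momentum $k=\sqrt{\lam}$, and the van der Corput lemma --- with the substantive work residing in the low-energy solution estimates. First I would record that \eqref{eq:q_hyp} forces $\int_0^\infty x|q(x)|\,dx<\infty$, so the scattering picture of \cite[Sect.~9.7]{tschroe} applies: for $\lam=k^2>0$ the resolvent kernel is $R_H(k^2+\I0;x,y)=W(k)^{-1}\phi(k,x\wedge y)f(k,x\vee y)$, where $\phi(k,\cdot)$ is the regular solution obeying the boundary condition at $0$ (real, entire in $k^2$, $k$-independently normalized at $x=0$), $f(k,\cdot)\sim\E^{\I kx}$ the Jost solution, and $W(k)=W[f(k,\cdot),\phi(k,\cdot)]$ the Jost function. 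Using $f(-k,\cdot)=\ol{f(k,\cdot)}$, $W(-k)=\ol{W(k)}$ on the real axis and $2\I k\,\phi(k,x)=\ol{W(k)}f(k,x)-W(k)f(-k,x)$, the jump of the resolvent collapses and Stone's formula gives, for all $x,y>0$,
\begin{equation}\label{eq:kernelrep}
[\E^{-\I tH}P_c(H)](x,y)=\frac{2}{\pi}\int_0^\infty \E^{-\I tk^2}\,\frac{k^2\,\phi(k,x)\,\phi(k,y)}{|W(k)|^2}\,dk ,
\end{equation}
so \eqref{full} reduces to an $\OO(|t|^{-1/2})$ bound for \eqref{eq:kernelrep}, uniform in $x,y$ (it suffices to treat $t>0$, $t<0$ being the complex conjugate).

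Next I would expose the spatial oscillation so that it cannot spoil uniformity. Substituting $2\I k\phi=\ol{W}f(k)-Wf(-k)$ in both factors of \eqref{eq:kernelrep} and writing $f(\pm k,x)=\E^{\pm\I kx}m(\pm k,x)$, the integrand becomes a finite sum of terms $\E^{-\I tk^2+\I k\xi}\,r(k)\,m(\eps_1k,x)\,m(\eps_2k,y)$ with $\xi\in\{\pm x\pm y\}$, $\eps_j\in\{\pm\}$ and $r(k)\in\{1,W(\mp k)/W(\pm k)\}$ of modulus one. Since $|\partial_k^2(-tk^2+k\xi)|\equiv 2t$ \emph{independently of $\xi$}, the van der Corput lemma bounds each piece over an interval $I$ by $Ct^{-1/2}\big(\norms{A}_{L^\infty(I)}+\norms{\partial_kA}_{L^1(I)}\big)$, with $A$ the amplitude and $C$ independent of $x,y$. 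As $\partial_km(\cdot,x)$ is bounded but not integrable on $(0,\infty)$, I would split $I=(0,k_0)\cup(k_0,\infty)$. On $(k_0,\infty)$ I would use the transformation-operator representation $m(k,x)=1+\int_0^\infty B(x,x+s)\E^{\I ks}\,ds$ with $\sup_x\norms{B(x,\cdot)}_{L^1}<\infty$ (a Marchenko-type bound for the perturbed Bessel operator, relative to the free Bessel Jost solution, under $\int_1^\infty x|q|\,dx<\infty$), expand the product $m(\eps_1k,x)m(\eps_2k,y)$, and keep the $s$-integrals outside the $k$-integral; each term then contains an inner integral $\int_{k_0}^\infty\E^{-\I tk^2+\I k\xi'}r(k)\,dk=\OO(t^{-1/2})$ uniformly in $\xi'$ (van der Corput, using that $r(k)$ tends to a constant as $k\to\infty$ with $\partial_kr\in L^1(k_0,\infty)$, a fixed finite quantity since $r$ depends only on $W$), and the outer $s$-integrals converge against $\norms{B(x,\cdot)}_{L^1}\norms{B(y,\cdot)}_{L^1}$, bounded uniformly in $x,y$; the one term without an $s$-integral is handled the same way after subtracting $r(\infty)$, whose contribution is an explicit Fresnel integral of size $\OO(t^{-1/2})$.

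The whole difficulty is then the low-energy regime: it remains to bound
\begin{equation}\label{eq:lowEbd}
\sup_{0<k<k_0}\left|\frac{k^2\phi(k,x)\phi(k,y)}{|W(k)|^2}\right|+\int_0^{k_0}\left|\partial_k\!\left(\frac{k^2\phi(k,x)\phi(k,y)}{|W(k)|^2}\right)\right|dk
\end{equation}
uniformly in $x,y>0$ and then apply van der Corput to \eqref{eq:kernelrep} on $(0,k_0)$. Here the hypotheses are essential. The absence of a resonance and of a zero eigenvalue --- in any of the equivalent forms of Lemma~\ref{lem:resonance} --- is exactly what keeps $W(k)$, once its power-type $k\to0$ behaviour has been divided out, bounded away from zero, so that $|W(k)|^{-2}$ and its $k$-derivative are controlled on $(0,k_0)$; in the resonant/eigenvalue case the amplitude in \eqref{eq:lowEbd} is genuinely unbounded as $k\to0$ and the $t^{-1/2}$ rate breaks down. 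The control of $k^2\phi(k,x)\phi(k,y)$ and its $k$-derivative uniformly in $x,y$ then follows from the sharp pointwise bounds for $\phi(k,x)$ and $\partial_k\phi(k,x)$ proved earlier in the paper, which hold precisely under $\int_1^\infty x^{\max(2,l+1)}|q(x)|\,dx<\infty$ --- the exponent $\max(2,l+1)$ arising because $\phi(k,x)$ passes from its $x\to0$ behaviour $\asymp x^{l+1}$ to oscillation of amplitude $\asymp k^{-l-1}$ around $kx\sim1$, and each $k$-derivative costs a factor of $x$ that the decay of $q$ must absorb. I expect exactly this uniform low-energy estimate \eqref{eq:lowEbd} to be the main obstacle: it requires pinning down the edge behaviour of the Jost function $W(k)$ accurately enough to exclude the resonant/eigenvalue case --- which is why that behaviour is studied in its own right --- together with the uniform-in-$(x,y)$ estimates for $\phi$ and $\partial_k\phi$ under the stated decay of $q$; these solution estimates, rather than the oscillatory-integral machinery, are the heart of the matter.
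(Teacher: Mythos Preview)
Your overall architecture (Stone's formula, low/high-energy split, van der Corput) matches the paper, but both halves of your argument contain genuine gaps that the paper has to work around by quite different means.

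\textbf{High energy.} You expand $\phi$ via the scattering relation $2\I k\,\phi=\ol{W}f(k,\cdot)-Wf(-k,\cdot)$ and write $f(k,x)=\E^{\I kx}m(k,x)$ with a Marchenko-type representation $m(k,x)=1+\int B(x,x+s)\E^{\I ks}\,ds$. This is exactly the step the paper warns against in the introduction: for $l>0$ the Jost solution satisfies $f(k,x)\sim c\,(kx)^{-l}\E^{\I kx}$ as $x\to0$, so $m(k,x)$ blows up like $x^{-l}$ and there is no hope that $\sup_x\|B(x,\cdot)\|_{L^1}<\infty$ or that the amplitude is uniformly bounded. The cancellation between the four terms that kept $\phi$ bounded at $0$ is destroyed once you split them apart. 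The paper avoids the scattering relation entirely in this regime and instead uses the Born series $\cRH=\sum_n\cRl(-q\cRl)^n$ together with the key Lemma~3.4: the free kernel $r_l(k;x,y)=k\sqrt{xy}\,J_{l+1/2}(kx)H^{(1)}_{l+1/2}(ky)$ is, for each sign of $k$, the Fourier transform of a measure whose total variation is bounded \emph{uniformly in $x,y$}. This uniformity (proved via a nontrivial $H^1$ estimate on a rescaled multiplier) is what replaces the exponential addition law $\E^{\I kx}\E^{\I ky}=\E^{\I k(x+y)}$ that your approach implicitly relies on.

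\textbf{Low energy.} Your claim that \eqref{eq:lowEbd} is bounded uniformly in $x,y$ from the pointwise estimates on $\phi$ and $\partial_k\phi$ is false. Those estimates give $|\partial_k\tilde\phi(k,x)|\le Cx\big(\tfrac{|k|x}{1+|k|x}\big)^l$, and feeding this into $\|A'\|_{L^1(0,k_0)}$ produces exactly the bound $C\max(x,y)$ --- this is the content of the paper's Theorem~3.1, which is \emph{not} uniform. To remove the spatial growth the paper (Theorem~3.2) uses the transformation operators $\phi=(I+B)\phi_l$ near $0$ and $\phi=(I+K)\phi_l$ near $\infty$, together with the observation that $\tilde\phi_l(k,x)=J(|k|x)$ depends only on the product $kx$: hence it is the Fourier transform of a finite measure whose total variation is $x$-independent by scaling, and the variant van der Corput Lemma~A.2 applies. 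The operators $B$ and $K$ are then pulled outside the $k$-integral and controlled by Corollaries~2.6 and~2.11, which is where the hypothesis $\int_1^\infty x^{l+1}|q|\,dx<\infty$ actually enters.

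In short, the ``standard route'' you describe is the $l=0$ argument; for $l>0$ both the singularity of the Jost solution at the origin and the loss of the exponential group law force the paper's two substitutes (Born series with uniform measure bounds at high energy, transformation operators plus scaling at low energy), neither of which appears in your proposal.
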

This result will follow from the corresponding low energy result Theorem~\ref{thm:le2} (see also Theorem~\ref{thm:le1}) with the high energy result
Theorem~\ref{thm:he}. We also remark that the decay rate is optimal (see below).

On the whole line such results have a long tradition and we refer to Weder \cite{wed}, Goldberg and Schlag \cite{GS}, Egorova, Kopylova, Marchenko and Teschl \cite{EKMT} (for the discrete case see \cite{EKT}) as well as the reviews \cite{K10,schlag}. On the half line the case $l=0$ was treated by Weder \cite{wed2}. The case of general $l$ but without potential was recently considered in Kova\v{r}\'{i}k and Truc \cite{kotr} (see also \cite{FFFP, FFFP2} for related results).
While our overall strategy looks quite similar to the classical case $l=0$, the details are much more delicate at several points: the first problem stems from
the fact that only one solution will be bounded near $x=0$ while the other one will have a singularity if $l> 0$. In particular, in this case the Jost solutions will
have a singularity near $x=0$ and the expression of the regular solution (which is in the domain of our operator near $x=0$) in terms of the Jost
solutions (i.e., the scattering relations) can no longer be used to obtain useful estimates. The second problem is that the simple group structure
of the exponential functions breaks down for Bessel functions which requires novel strategies to handle the Born series expansion of the resolvent.
And of course one has to work much harder to get some estimates, which are trivial for trigonometric functions, for Bessel functions. In particular,
our present paper should also be understood as a contribution to understanding the properties of solutions of the underlying spectral problem.
In this respect we would like to emphasize that the behavior of the Jost function near the bottom of the essential spectrum is still not understood
satisfactorily, and for this very reason the resonant case had to be excluded from our main theorem. This is definitely a gap which should be filled.

As already mentioned, we have restricted ourselves to the boundary condition \eqref{eq:bc}
corresponding to the Friedrichs extension for $-\frac{1}{2}<l<\frac{1}{2}$. 
We will investigate the effect of other boundary conditions (including the case of \eqref{Schr}--\eqref{eq:bc} with $l\in (-\frac32,-\frac12)$, considered in \cite{bdg}) in a forthcoming work \cite{HKT}. For the remaining missing case $l=-\frac{1}{2}$ on the other hand we do expect Theorem~\ref{Main} hold true but,
due to the logarithmic part of the second solution of the Bessel equation, proofs would be significantly more involved so the treatment of this
case has been omitted.

Finally, we mention that one of the motivation to study \eqref{Schr} is the fact that it arises naturally when discussing the $n$-dimensional
Schr\"odinger equation
\begin{equation} \label{Schr-n}
  \I \dot \Psi(t,\vx) = H_n \Psi(t,\vx), \quad 
  H_n:= - \Delta +  V(\vx),\quad 
  (t,\vx)\in\R\times \R^n, \quad n\ge 2.
\end{equation}
However, it is important to emphasize that this is not the only motivation since operators of the type in \eqref{Schr} are the 
prototypical example of strongly singular Schr\"odinger operators which have attracted considerable interest
recently (see e.g.\ \cite{kst,kst2,kt,kt2} and the references therein) or as examples in other physical and mathematical models (see e.g.\ \cite{bdg, DR}).
Nevertheless, and since a lot is known about dispersive estimates for \eqref{Schr-n} (see the reviews \cite{K10,schlag} already
mentioned above), it seems worth while to discuss what these estimates imply for \eqref{Schr}.

To this end recall (see e.g.\ Example 1.5 in \cite{wdln}) that if $V(\vx)=q(x)$, $x=|\vx|$, is radially symmetric, then $H_n$ will be reduced by
the spherical harmonics (cf.\ \cite{muln})
\[
Y_l^m: \bS^{n-1} \to \C, \qquad l\in\N_0, \: m=1,\dots,N(n,l),
\]
which are an orthonormal basis of eigenfunctions of the Laplace--Beltrami operator $\Delta_{\bS^{n-1}}$,
\[
-\Delta_{\bS^{n-1}} Y_l^m = l(l+n-2) Y_l^m,
\]
on $L^2(\bS^{n-1})$.\footnote{The $l$ used here is different from
the $l$ in \eqref{Schr} and the rest of the paper unless $n=3$.} Then the subspaces
\[
\hr_{l,m} = \Big\{ \Psi(\vx)=x^{\frac{n-1}{2}} \psi(x) Y_l^m\Big(\frac{\vx}{x}\Big) \big|\ x=|\vx|,\, \psi\in L^2(\R_+) \Big\} \subseteq L^2(\R^n)
\]
span $L^2(\R^n)= \bigoplus_{l,m} \hr_{l,m}$ and give rise to the decomposition
\[
H_n = \bigoplus_{l,m} U_n^{-1} H_{n,l} U_n, \qquad U_n: \hr_{l,m} \to L^2(\R_+),\ \:  x^{\frac{n-1}{2}} \psi(x) Y_l^m\Big(\frac{\vx}{x}\Big) \mapsto \psi(x),
\]
where
\[
H_{n,l} = -\frac{d^2}{dx^2} + \frac{l(l+n-2)+\frac{(n-1)(n-3)}{4}}{x^2} +q(x).
\]
In particular, an estimate of the type
\be\label{eq:dend}
\norm{\E^{-\I t H_n}P_c(H_n)}_{L^1(\R^n)\to L^\infty(\R^n)}
	= \mathcal{O}(|t|^{-n/2}),
\ee
implies
\[
\norm{\E^{-\I t H_{n,l}} P_c(H_{n,l})}_{L^1(\R_+;x^{\frac{n-1}{2}}) \to L^\infty(\R_+;x^{-\frac{n-1}{2}})}
= \mathcal{O}(|t|^{-n/2}).
\]
Here $L^2(\R_+;x^\alpha)$, $\alpha\in\R$ denotes the standard $L^2$ space with the weight $x^\alpha$.
In the special case $l=0$ we get  
\[
H_{n,0} = -\frac{d^2}{dx^2} + \frac{s(s-1)}{4x^2} + q(x),\quad s= \frac{n-1}{2}, 
\]
and hence 
\[
\norm{\E^{-\I t H_{n,0}} P_c(H_{n,0})}_{L^1(\R_+;x^s) \to L^\infty(\R_+;x^{-s})}
= \mathcal{O}(|t|^{-s-1/2}),
\]
which generalizes Theorem~2.4 from \cite{kotr} where the case without potential and with
the weight $(1+x)^s$ was established. For conditions on $V$ for \eqref{eq:dend} to
hold we refer again to the above mentioned survey articles \cite{K10,schlag}. At this point we only
note that it of course holds in the case without potential where the time evolution is given by
\[
\left(\E^{\I \Delta t} \Psi_0\right)(\vx) = \frac{1}{ (4\pi\I t)^{n/2}} \int_{\R^n} \E^{\I\frac{|\vx-\vy|^2}{4t}} \Psi_0(\vy) d\vy.
\]
Moreover, the time evolution of $H_{n,l}$ can be obtained by projecting $\E^{\I \Delta t}$ to the corresponding
spherical harmonics. For example, in three dimensions one obtains

\begin{align*}
[\E^{-\I t H_{3,l}}](x,y)
	&= \frac{xy\,Y_l^0(0,0)^{-1}}{(4 \pi \I t)^{3/2}} 
	   \int_0^\pi \int_0^{2\pi} \E^{\I\frac{x^2-2xy\cos(\theta) 
	   + y^2}{4t}} Y_l^0(\theta,\vphi) \sin(\theta) d\theta\,d\vphi
	   \\
	&= \frac{2\pi xy}{(4 \pi \I t)^{3/2}} \E^{\I\frac{x^2 +y^2}{4t}} 
	   \int_{-1}^1 \E^{-\I\frac{xy}{2t} r} P_l(r) dr
	   \\
	&= \frac{\I^{-l-1/2} }{2 \I t} \E^{\I\frac{x^2 +y^2}{4t}} \sqrt{xy} 
	   J_{l+1/2}\Big(\frac{xy}{2t}\Big),
\end{align*}
where we have chosen $m=0$, $\vx=(0,0,x)$ and used 
\[
Y_l^0(\theta,\vphi) = \sqrt{\frac{2l+1}{4\pi}} P_l(\cos(\theta)), \quad P_l (\xi) = \frac{1}{2^l l!}\frac{d^l}{d\xi^l} (\xi^2-1)^l,
\]
 as well as
\dlmf{18.17.19} for the last integral. Here $J_\nu$ is the Bessel function of order $\nu$ and $P_l$ are the Legendre polynomials.
This should again be compared with \cite[Eq.~(3.23)]{kt}.
In particular,  for $l=0$ we have $\| [\E^{-\I t H_{3,0}}](x,y) \|_\infty = \frac{1}{\sqrt{\pi} |t|^{1/2}}$ (while $\| (xy)^{-1} [\E^{-\I t H_{3,0}}](x,y)\|_\infty = \frac{1}{2\sqrt{\pi} |t|^{3/2}}$), which shows that the decay in our main Theorem~\ref{Main} is optimal.
Of course a potential of the type $V(\vx)=\frac{a}{|\vx|^2}$ with $a> - \frac{(n-2)^2}{4}$ could be included in this discussion as it can be absorbed in the definition of $s$ \cite{bpstz1,bpstz2}.

\section{Properties of solutions}
\label{wa-sec}

In this section we will collect some properties of the solutions of the underlying
differential equation required for our main results.

\subsection{The regular solution}

Suppose that $l>-\frac{1}{2}$ and  
\be\label{q:hyp}
q\in L^1_\loc(\R_+)\quad \text{and} \quad \int_0^1 x|q(x)|dx <\infty.
\ee
Then the ordinary differential equation 
\[
\tau f= z f,\qquad \tau := - \frac{d^2}{dx^2} + \frac{l(l+1)}{x^2} + q(x),
\]
has a system of solutions $\phi(z,x)$ and $\theta(z,x)$ which is real 
entire with respect to $z$ such that 
\be\label{eq:fs01}
\phi(z,x) 
	= C_l x^{l+1}\ti{\phi}(z,x),\quad  
\theta(z,x)
	= \frac{x^{-l}}{(2l+1)C_l}\ti{\theta}(z,x),\quad 
C_l =\frac{\sqrt{\pi}}{\Gamma(l+\frac{3}{2}) 2^{l+1}},
\ee
where $\ti{\phi}(z,\cdot),\ti{\theta}(z,\cdot)\in W^{1,1}[0,1]$ and 
$\ti{\phi}(z,0)=\ti{\theta}(z,0)=1$. For a detailed construction of these 
solutions we refer to, e.g., \cite{kt}.
 
We start with two lemmas containing estimates for the Green's function 
of the unperturbed equation 
\[ 
G_l(z,x,y) = \phi_l(z,x) \theta_l(z,y) - \phi_l(z,y) \theta_l(z,x)
\]
and the regular solution $\phi(z,x)$ 
(see, e.g., \cite[Lemmas~2.2, A.1, and A.2]{kst}).  Here 
\[
\phi_l(z,x) 
	= z^{-\frac{2l+1}{4}} \sqrt{\frac{\pi x}{2}} 
	  J_{l+\frac{1}{2}}(\sqrt{z} x),
\]
and
\[
\theta_l(z,x) 
= z^{\frac{2l+1}{4}} \sqrt{\frac{\pi x}{2}} 
	\begin{dcases}
	\frac{1}{\sin((l\!+\!\frac{1}{2})\pi)} 
		J_{-l-\frac{1}{2}}(\sqrt{z} x), 
		& {l\!+\!\frac{1}{2}} \in \R_+\!\setminus\N_0,
		\\
		\frac{1}{\pi}\log(z)J_{l+\frac{1}{2}}(\sqrt{z} x)
	    -Y_{l+\frac{1}{2}}(\sqrt{z} x), & {l\!+\!\frac{1}{2}} \in\N_0,
	\end{dcases}
\]
where $J_{\nu}$ and $Y_{\nu}$ are the usual Bessel and Neumann 
functions (see Appendix \ref{sec:Bessel}). All branch
cuts are chosen along the negative real axis unless explicitly stated
otherwise.

\begin{lemma}[\cite{kst}] \label{lem:b.1}
For $l>-\frac{1}{2}$ the following estimates hold:
\be\label{estphil}
\abs{\phi_l(k^2,x)} 
	\le C \left(\frac{x}{1+\abs{k}x}\right)^{l+1} \E^{\abs{\im\, k} x},
\ee	
and
\be\label{estGl}
\abs{G_l(k^2,x,y)} 
	\le C \left(\frac{x}{1+\abs{k}x}\right)^{l+1} 
	\left(\frac{1+\abs{k}y}{y}\right)^l 
	\E^{\abs{\im\, k} (x-y)}, \quad y \leq x.
\ee
\end{lemma}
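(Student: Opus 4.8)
The plan is to deduce both inequalities, through the explicit representations
\[
\phi_l(z,x)=k^{-\nu}\sqrt{\tfrac{\pi x}{2}}\,J_\nu(kx),\qquad \nu:=l+\tfrac12>0,\quad k:=\sqrt z\ \ (\text{branch }\im k\ge0),
\]
from classical uniform bounds on cylinder functions. Since $G_l$, $\phi_l$ and $\theta_l$ depend only on $z$, fixing the branch $\im k\ge0$ (so that $|\im k|=|\im\sqrt z|$) costs nothing.

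For \eqref{estphil} I would first record the well-known uniform bound
\[
|J_\nu(\zeta)|\le C_\nu\,\frac{|\zeta|^{\nu}}{(1+|\zeta|)^{\nu+1/2}}\,\E^{|\im\zeta|},\qquad |\arg\zeta|\le\tfrac{\pi}{2},
\]
obtained by splitting at $|\zeta|=1$: for $|\zeta|\le1$ the power series of $J_\nu$ gives $|J_\nu(\zeta)|\le C|\zeta|^{\nu}$, and for $|\zeta|\ge1$ the standard asymptotic expansion of $J_\nu$ with its explicit remainder estimate gives $|J_\nu(\zeta)|\le C|\zeta|^{-1/2}\E^{|\im\zeta|}$; in both ranges the displayed right-hand side is comparable to these. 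Inserting $\zeta=kx$ and cancelling the powers of $|k|$ turns this into \eqref{estphil} at once.

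The inequality \eqref{estGl} is the real work, and the crux — what I expect to be the main obstacle — is that estimating the two summands of $G_l(z,x,y)=\phi_l(z,x)\theta_l(z,y)-\phi_l(z,y)\theta_l(z,x)$ separately is too crude: because $\theta_l$ also grows like $\E^{|\im k|x}$, that route yields only the exponential factor $\E^{|\im k|(x+y)}$, whereas the claim asserts $\E^{|\im k|(x-y)}$. To expose the cancellation giving the sharper factor I would use that $G_l$ is unaffected when $\theta_l$ is replaced by any solution $g$ of the unperturbed equation with the same Wronskian $W(g,\phi_l)=W(\theta_l,\phi_l)=1$ (here $W(f,g)=fg'-f'g$; two such solutions differ by a multiple of $\phi_l$, which drops out of the antisymmetric combination), and take for $g$ the free Jost-type solution $f_l(z,x)\propto\sqrt{x}\,H^{(1)}_{l+1/2}(kx)$, normalized so that $f_l(z,x)\to\E^{\I kx}$ as $x\to\infty$. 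From the behaviour of $H^{(1)}_{l+1/2}$ at $0$ and at $\infty$ one gets, for $\im k\ge0$,
\[
|f_l(z,x)|\le C\left(\frac{1+|k|x}{|k|x}\right)^{l}\E^{-\im k\,x},
\]
while computing the Wronskian from the asymptotics $\phi_l\sim C_l x^{l+1}$, $f_l\sim\mathrm{const}\cdot k^{-l}x^{-l}$ as $x\to0$ shows that $\gamma(k):=W(f_l,\phi_l)$ is a nonzero constant times $k^{-l}$, whence $|\gamma(k)|^{-1}\le C|k|^{l}$. Hence $G_l(z,x,y)=\gamma(k)^{-1}\bigl(\phi_l(z,x)f_l(z,y)-\phi_l(z,y)f_l(z,x)\bigr)$.

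It then remains to estimate the two terms using \eqref{estphil} and the bound for $f_l$. The first term is immediately $\le C\,\bigl(\tfrac{x}{1+|k|x}\bigr)^{l+1}\bigl(\tfrac{1+|k|y}{y}\bigr)^{l}\E^{\im k(x-y)}$, which is exactly the right-hand side of \eqref{estGl} since $\im k=|\im k|\ge0$. The second term carries $\E^{-\im k(x-y)}\le1\le\E^{|\im k|(x-y)}$ and the polynomial weight $\bigl(\tfrac{y}{1+|k|y}\bigr)^{l+1}\bigl(\tfrac{1+|k|x}{x}\bigr)^{l}$, which equals the desired weight $\bigl(\tfrac{x}{1+|k|x}\bigr)^{l+1}\bigl(\tfrac{1+|k|y}{y}\bigr)^{l}$ times the factor $\bigl(\tfrac{y+|k|xy}{x+|k|xy}\bigr)^{2l+1}\le1$ (using $y\le x$ and $2l+1>0$); so this term too is dominated by the right-hand side of \eqref{estGl}, and adding the two finishes the proof. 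Finally, the case $l+\tfrac12\in\N$, in which $H^{(1)}_{l+1/2}$ (and $\theta_l$) carries a logarithmic term, requires no extra argument because that logarithm is subordinate to the leading $|\zeta|^{-l-1/2}$ singularity at the origin used above; only $l=-\tfrac12$ is genuinely different, consistent with its exclusion.
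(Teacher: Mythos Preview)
The paper does not actually prove this lemma; it is simply quoted from \cite{kst} without argument. So there is no proof in the paper to compare against, and your task reduces to checking that what you wrote is a valid proof.

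Your argument is correct. The first estimate is the standard Bessel bound; the second one hinges, as you correctly identify, on avoiding the naive termwise estimate (which would give $\E^{|\im k|(x+y)}$) by rewriting $G_l$ through the Hankel-type solution $f_l$ and the Wronskian $\gamma(k)=W(f_l,\phi_l)=\E^{\I\pi l/2}k^{-l}$. The paper itself uses exactly this device later, in the proof of Lemma~\ref{lem:part-z}, where it writes
\[
G_l(k^2,x,y)=-\tfrac{\I\pi}{4}\sqrt{xy}\bigl[H^{(1)}_{l+\frac12}(kx)H^{(2)}_{l+\frac12}(ky)-H^{(1)}_{l+\frac12}(ky)H^{(2)}_{l+\frac12}(kx)\bigr],
\]
which is precisely your replacement of $\theta_l$ by a decaying solution. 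Your treatment of the two resulting terms is clean: the first gives the target bound directly, and for the second your monotonicity argument $\bigl(\tfrac{y+|k|xy}{x+|k|xy}\bigr)^{2l+1}\le 1$ (valid since $y\le x$ and $2l+1>0$) together with $\E^{-\im k(x-y)}\le \E^{|\im k|(x-y)}$ is exactly right.

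One small cosmetic point: you fix $\im k\ge 0$ at the outset, which is fine because $G_l$ and $\phi_l$ depend only on $z=k^2$; you might state this one line earlier so the reader does not wonder why $|\im k|$ becomes $\im k$ in the exponential bookkeeping.
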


\begin{lemma}[\cite{kst}]
Assume \eqref{q:hyp}. Then $\phi(z,x)$ satisfies the integral equation
\[ 
\phi(z,x) = \phi_l(z,x) + \int_0^x G_l(z,x,y) q(y) \phi(z,y) dy.
\]
Moreover, $\phi$ is entire in $z$ for every $x>0$ and satisfies the estimate
\be\label{estphi}
\abs{\phi(k^2,x) - \phi_l(k^2,x)} 
	\leq C \left(\frac{x}{1+ \abs{k} x}\right)^{l+1} 
	\E^{\abs{\im\, k} x} \int_0^x \frac{y \abs{q(y)}}{1 +\abs{k} y} dy.
\ee
\end{lemma}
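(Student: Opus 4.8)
This is quoted from \cite{kst}; the plan is to realize $\phi(z,\cdot)$ as the solution of the stated Volterra integral equation and to treat it by successive approximations, with the a priori bounds of Lemma~\ref{lem:b.1} supplying all the control needed. First I would record the one preliminary fact that $G_l$ is the Volterra Green's function of the unperturbed expression $\tau_l:=\tau-q=-\frac{d^2}{dx^2}+\frac{l(l+1)}{x^2}$: from \eqref{eq:fs01} one has $\phi_l(z,x)=C_l x^{l+1}(1+O(x^2))$ and $\theta_l(z,x)=\frac{x^{-l}}{(2l+1)C_l}(1+O(x^2))$ as $x\to0$, hence the constant Wronskian $W(\phi_l,\theta_l)\equiv-1$, and therefore $G_l(z,x,x)=0$, $\partial_x G_l(z,x,y)\big|_{x=y}=1$; differentiating $u(x):=\int_0^x G_l(z,x,y)g(y)\,dy$ twice (and using $\tau_l\phi_l=z\phi_l$, $\tau_l\theta_l=z\theta_l$) then shows $(\tau_l-z)u=-g$ for every $g\in L^1_\loc(\R_+)$, with $u(x)=o(x^{l+1})$ as $x\to0$ on account of \eqref{estGl}.

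Next I would set up the iteration $\phi_0:=\phi_l$, $\phi_{n+1}(z,x):=\int_0^x G_l(z,x,y)q(y)\phi_n(z,y)\,dy$. Plugging \eqref{estphil} and \eqref{estGl} into the $n$-th term and exploiting the elementary identities $\left(\frac{1+\abs{k}y}{y}\right)^{l}\left(\frac{y}{1+\abs{k}y}\right)^{l+1}=\frac{y}{1+\abs{k}y}$ and $\E^{\abs{\im k}(x-y)}\E^{\abs{\im k}y}=\E^{\abs{\im k}x}$, an induction on $n$ — whose inductive step is the identity $\int_0^x\Lambda'(y)\Lambda(y)^{n}\,dy=\frac{\Lambda(x)^{n+1}}{n+1}$ with $\Lambda(x):=\int_0^x\frac{y\abs{q(y)}}{1+\abs{k}y}\,dy$ — gives
\[
\abs{\phi_n(z,x)}\le C\left(\frac{x}{1+\abs{k}x}\right)^{l+1}\E^{\abs{\im k}x}\,\frac{\bigl(C\Lambda(x)\bigr)^{n}}{n!},\qquad n\ge 0.
\]
Since $\Lambda(x)\le\int_0^x y\abs{q(y)}\,dy<\infty$ for each fixed $x$ by \eqref{q:hyp} (uniformly in $k$), the series $\phi:=\sum_{n\ge0}\phi_n$ converges absolutely and locally uniformly in $(z,x)$; interchanging sum and integral (dominated convergence) it satisfies the integral equation, and being a locally uniform limit of functions entire in $z$ it is entire in $z$. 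Because $\phi-\phi_l=\sum_{n\ge1}\phi_n=O\bigl(x^{l+1}\Lambda(x)\bigr)=o(x^{l+1})$, whereas the only solution of $(\tau_l-z)v=0$ that is $o(x^{l+1})$ vanishes (the two building blocks behave like $x^{l+1}$ and $x^{-l}$), this $\phi$ is exactly the regular solution of \eqref{eq:fs01}. Finally, summing the majorant yields
\[
\abs{\phi(k^2,x)-\phi_l(k^2,x)}\le C\left(\frac{x}{1+\abs{k}x}\right)^{l+1}\E^{\abs{\im k}x}\bigl(\E^{C\Lambda(x)}-1\bigr),
\]
and \eqref{estphi} drops out of $\E^{t}-1\le t\,\E^{t}$ after the factor $\E^{C\Lambda(x)}$ is absorbed into the constant.

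I do not expect any deep obstacle here. The one step calling for genuine attention is the very last one: converting the Neumann-series bound $\E^{C\Lambda(x)}-1$ into the \emph{linear} bound $C\Lambda(x)$ of \eqref{estphi} requires $\Lambda$ to be bounded globally, not merely pointwise in $x$, which is where a real decay hypothesis on $q$ (stronger than the bare \eqref{q:hyp}) has to be invoked. The only other thing to be careful about is the Wronskian and normalization bookkeeping in the first step, since the whole iteration rests on $G_l$ being the Volterra kernel with $\partial_x G_l\big|_{x=y}=1$.
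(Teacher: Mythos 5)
Your argument is essentially the paper's own proof (quoted from Lemma~2.2 of \cite{kst} and recalled via the iteration and the bound \eqref{eq:phi_n} in the proof of Lemma~\ref{lem:about-partial-z}): iterate the Volterra equation, feed \eqref{estphil} and \eqref{estGl} into the $n$-th term to get the factorial bound, and sum the series, so the approach matches. Your closing caveat is accurate rather than a defect of your write-up: summing over $n\ge 1$ produces the extra factor $\E^{C\int_0^x \frac{y\abs{q(y)}}{1+\abs{k}y}dy}$, so the constant in \eqref{estphi} is uniform in $x$ only on compact $x$-intervals (or globally once $\int_1^\infty y\abs{q(y)}\,dy<\infty$, i.e.\ under \eqref{eq:q_mar}, which is the regime in which the paper actually uses the estimate); under \eqref{q:hyp} alone the exponential factor has to be read as absorbed into a constant depending on $\int_0^x y\abs{q(y)}\,dy$, exactly as in \cite{kst}.
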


We also need the following estimates.

\begin{lemma}
\label{lem:part-z}
For $l>  -\frac{1}{2}$ the following estimates
hold
\be\label{eq:partial-z-phil}
\abss{\partial_k{\phi}_l(k^2,x)}
\le C |k|x 
	\left(\frac{ x}{1+\abs{k}x}\right)^{l+2}
	\E^{\abss{\im k} x}
\ee
and 
\be\label{eq:partial-z-Gl}
\abs{\partial_k G_l(k^2,x,y)}
\le C|k| x \left(\frac{x}{1+|k|x}\right)^{l+2}
	\left(\frac{1+|k|y}{y}\right)^{l}
	\E^{\abss{\im k}(x-y)},\quad y\le x.
\ee
\end{lemma}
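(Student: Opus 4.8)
The plan is to obtain both estimates by differentiating under the integral/series sign the known representations of $\phi_l$ and $G_l$ in terms of Bessel functions, and then controlling the resulting Bessel expressions via the standard bounds collected in Appendix~\ref{sec:Bessel} (the same ones that underlie Lemma~\ref{lem:b.1}). First I would treat \eqref{eq:partial-z-phil}. Write $\phi_l(k^2,x) = k^{-l-1/2}\sqrt{\pi x/2}\, J_{l+1/2}(kx)$ and differentiate in $k$. Using the recurrence relations for Bessel functions (\dlmf{10.6.2}), $\partial_k[k^{-l-1/2}J_{l+1/2}(kx)] = -x\, k^{-l-1/2} J_{l+3/2}(kx)$, so in fact $\partial_k\phi_l(k^2,x) = -x\,\phi_{l+1}(k^2,x)\cdot(\text{constant})$ up to a harmless prefactor; more precisely one gets $\partial_k\phi_l(k^2,x)$ expressed through $\phi_{l+1}(k^2,x)$ times an explicit power of $k$. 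Applying \eqref{estphil} with $l$ replaced by $l+1$ then yields a bound of the form $C\,x\,|k|^{?}\,(x/(1+|k|x))^{l+2}\E^{|\im k|x}$; checking the power of $|k|$ that accompanies the change $l\mapsto l+1$ and the $z$-to-$k$ chain rule (recall $z = k^2$, $\partial_k = 2k\,\partial_z$) gives exactly the factor $|k|x$ in \eqref{eq:partial-z-phil}. One must be slightly careful with the two cases $l+1/2\in\N_0$ versus not, but $\phi_l$ itself has no logarithmic branch, so this is uniform.

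For \eqref{eq:partial-z-Gl} the strategy is the same but now $G_l(z,x,y) = \phi_l(z,x)\theta_l(z,y) - \phi_l(z,y)\theta_l(z,x)$, so I would differentiate each of the four factors. The terms where $\partial_k$ hits a $\phi_l$ are handled by the computation above combined with \eqref{estphil} for the undifferentiated $\theta_l$ factor (whose bound is implicit in \eqref{estGl}); the terms where $\partial_k$ hits a $\theta_l$ require the analogous Bessel recurrence for $J_{-l-1/2}$ (resp.\ $Y_{l+1/2}$ in the integer case). Here the logarithmic term when $l+1/2\in\N_0$ needs attention: differentiating $\frac1\pi\log(z)J_{l+1/2}$ produces a term $\frac{1}{\pi z}J_{l+1/2}$, but this is lower order and absorbed by the $|k|$ already present on the right-hand side. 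After collecting terms one should verify that the worst contribution has precisely the claimed structure $|k|x\,(x/(1+|k|x))^{l+2}\,((1+|k|y)/y)^{l}\,\E^{|\im k|(x-y)}$ for $y\le x$; the extra power of $x$ and the shift $l\mapsto l+2$ in the first bracket come from the $\partial_k\phi_l$ term, which is the dominant one, while the $\theta_l$-derivative terms turn out to be no larger.

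A cleaner alternative, which I would actually prefer to write down, is to avoid case distinctions by using Cauchy's integral formula: since $\phi_l(z,x)$ and $G_l(z,x,y)$ are entire in $z$, write $\partial_k f = 2k\,\partial_z f$ and $\partial_z f(z,x) = \frac{1}{2\pi\I}\oint_{|w-z|=r} \frac{f(w,x)}{(w-z)^2}\,dw$, choosing the contour radius $r$ comparable to $|z| = |k|^2$ (say $r = \frac12|k|^2$) so that on the contour $|w| \asymp |k|^2$, i.e.\ $\sqrt{w}\asymp|k|$ and $|\im\sqrt w|\lesssim|\im k|$ (taking $|\im k|\le \frac12\re k$ or handling the regime $|\im k|\gtrsim|k|$ separately where everything is dominated by the exponential). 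Then \eqref{estphil}, \eqref{estGl} plug in directly on the contour and the $r^{-1}$ from the estimate $|\partial_z f|\le r^{-1}\max_{\text{contour}}|f|$ combines with the $2k$ and the contour length $2\pi r$ to produce the factor $|k|x$ after noting $(x/(1+|k|x))^{l+1}\cdot|k|x \asymp |k|x\,(x/(1+|k|x))^{l+2}\cdot(1+|k|x)/x\cdot\dots$—one checks the bookkeeping of powers carefully here.

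The main obstacle I expect is precisely this last bookkeeping: making sure the naive bound one gets from either method—differentiation or Cauchy estimate—does not lose a power of $(1+|k|x)$ and genuinely lands on $(x/(1+|k|x))^{l+2}$ rather than $(x/(1+|k|x))^{l+1}$ with a spurious extra $x$ or $|k|$. In the small-argument regime $|k|x\lesssim1$ one has $x/(1+|k|x)\asymp x$ and the claim reads $|\partial_k\phi_l|\lesssim|k|x^{l+3}$, which matches $\partial_k$ of $x^{l+1}$ times the $z$-Taylor coefficient; in the large-argument regime $|k|x\gtrsim1$ the Bessel functions are oscillatory of size $|k|^{-1/2}x^{-1/2}$ times the prefactor, and differentiating in $k$ costs one factor of $x$ (from the argument $kx$), which is exactly the $|k|x\cdot|k|^{-1}$ improvement encoded in the statement. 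Verifying the two regimes glue together with a single constant $C$ is routine but is where the care must go.
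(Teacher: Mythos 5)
Your treatment of \eqref{eq:partial-z-phil} is fine and is exactly the paper's argument: the recurrence \eqref{eq:recurrence} gives $\partial_k\phi_l(k^2,x)=-kx\,\phi_{l+1}(k^2,x)$, and \eqref{estphil} with $l$ replaced by $l+1$ yields the stated bound.

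For \eqref{eq:partial-z-Gl}, however, there is a genuine gap, and it is not the power-counting issue you flag but the exponential factor. The claimed bound carries $\E^{\abss{\im k}(x-y)}$, and the lemma is used (in Lemma~\ref{lem:about-partial-z}) for complex $k$, where this factor is essential to arrive at $\E^{\abss{\im k}x}$ in \eqref{eq:partial-z-diff-phi}. If you differentiate $G_l(z,x,y)=\phi_l(z,x)\theta_l(z,y)-\phi_l(z,y)\theta_l(z,x)$ and estimate the four resulting products term by term, each factor can only be bounded with $\E^{\abss{\im k}x}$ resp.\ $\E^{\abss{\im k}y}$, so you get $\E^{\abss{\im k}(x+y)}$ — off by the unbounded factor $\E^{2\abss{\im k}y}$. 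The improvement from $x+y$ to $x-y$ comes from a cancellation between the two products in the regime $1\le\abs{k}y\le\abs{k}x$, and the paper implements it by rewriting $\partial_kG_l$ in terms of the cross products $H^{(1)}_{\nu}(kx)H^{(2)}_{\mu}(ky)$, whose large-argument asymptotics \eqref{eq:asymp-H1-infty}--\eqref{eq:asymp-H2-infty} carry exactly $\E^{\I k(x-y)}$; this is the content of Step (iii) of the paper's proof, preceded by the two regimes $\abs{k}y\le\abs{k}x\le1$ and $\abs{k}y\le1\le\abs{k}x$ where the $J$/$Y$ form suffices. Relatedly, your assertion that the $\partial_k\theta_l$ terms are ``no larger'' is precisely what requires the case analysis $l>\tfrac12$, $\abs{l}<\tfrac12$, $l=\tfrac12$ (the behavior of $Y_{l-\frac12}$ near zero, with a logarithm at $l=\tfrac12$); stating it without that verification is not a proof. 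Your ``cleaner alternative'' via Cauchy's formula with contour radius $r\sim\abs{k}^2$ fails outright: even for real $k$ the circle $\abs{w-k^2}=\tfrac12 k^2$ contains points with $\abss{\im\sqrt{w}}\ge c\abs{k}$, so plugging \eqref{estphil}, \eqref{estGl} in on the contour produces $\E^{c\abs{k}x}$ instead of $\E^{\abss{\im k}x}$; and in the regime $\abs{k}x\le1$ this radius also misses the smallness factor $\abs{k}x$, since $\phi_l(\cdot,x)$ varies in $z$ on the scale $x^{-2}$, not $\abs{z}$. Repairing it would require contours adapted to $\min(x^{-2},\abs{k}/x)$ together with a separate argument for the exponential, at which point one is back to the direct Bessel/Hankel analysis of the paper.
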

\begin{proof}
The first inequality follows from the identity (see \eqref{eq:recurrence})
\[
\partial_k{\phi}_l(k^2,x) = -kx \,\phi_{l+1}(k^2,x)
\]
along with the bound \eqref{estphil}.

Before proving \eqref{eq:partial-z-Gl}, let us mention that 
\begin{align*}
G_l(k^2,x,y) 
&= - \frac{\pi}{2}\sqrt{xy} \left[J_{l+\frac12}(kx)Y_{l+\frac12}(ky) 
   - J_{l+\frac12}(ky)Y_{l+\frac12}(kx)\right],
\\
&= - \frac{\I\pi}{4}\sqrt{xy} \left[ H^{(1)}_{l+\frac12}(kx)H^{(2)}_{l+\frac12}(ky)    
   - H^{(1)}_{l+\frac12}(ky)H^{(2)}_{l+\frac12}(kx)\right],
\end{align*}
where $H^{(1)}_{\nu}$ and $H^{(2)}_{\nu}$ are the usual Hankel functions (see Appendix \ref{sec:Bessel}).
Hence we obtain 
\begin{align}\label{eq:partial-z-Gl-proof}
\begin{split}
\partial_k G_l(k^2,x,y) 
=&\; \frac{\pi}{2}\sqrt{xy} \left[xJ_{l+\frac32}(kx)Y_{l+\frac12}(ky) 
     - y J_{l+\frac32}(ky)Y_{l+\frac12}(kx)\right]
\\
 &\; - \frac{\pi}{2}\sqrt{xy} \left[y J_{l+\frac12}(kx)Y_{l-\frac12}(ky) 
     - x J_{l+\frac12}(ky)Y_{l-\frac12}(kx)\right],
\\
=&\; \frac{\I\pi}{4}\sqrt{xy} \left[xH^{(1)}_{l+\frac32}(kx)H^{(2)}_{l+\frac12}(ky)
     - y H^{(1)}_{l+\frac32}(ky)H^{(2)}_{l+\frac12}(kx)\right]
\\
 &\;-\frac{\I\pi}{4}\sqrt{xy}\left[y H^{(1)}_{l+\frac12}(kx)H^{(2)}_{l-\frac12}(ky) 
    - x H^{(1)}_{l+\frac12}(ky)H^{(2)}_{l-\frac12}(kx)\right].
\end{split}
\end{align}
Consider the function 
\begin{align*}
\begin{split}
{\mathcal G}_{l}(\eta,\xi) 
:=&\; \frac{\pi}{2} \big[\eta J_{l+\frac32}(\eta)Y_{l+\frac12}(\xi)
	- \xi J_{l+\frac32}(\xi)Y_{l+\frac12}(\eta) 
\\
  &   \qquad\qquad\qquad\qquad
	-\;\xi J_{l+\frac12}(\eta)Y_{l-\frac12}(\xi)
	+  \eta J_{l+\frac12}(\xi)Y_{l-\frac12}(\eta) \big]
\\
 =&\; \frac{\I\pi}{4} \left[ \eta H^{(1)}_{l+\frac32}(\eta)H^{(2)}_{l+\frac12}(\xi) 
    - \xi H^{(1)}_{l+\frac32}(\xi)H^{(2)}_{l+\frac12}(\eta)\right.
\\    
  &   \qquad\qquad\qquad\qquad
      \left. 
    -\;\xi H^{(1)}_{l+\frac12}(\eta)H^{(2)}_{l-\frac12}(\xi) 
    +  \eta H^{(1)}_{l+\frac12}(\xi)H^{(2)}_{l-\frac12}(\eta)\right].
\end{split}
\end{align*}

{\em Step (i):   $\abs{\xi}\le\abs{\eta}\le 1$.} Let us estimate the function
\[
\eta J_{l+\frac32}(\eta)Y_{l+\frac12}(\xi)
	-\xi J_{l+\frac32}(\xi)Y_{l+\frac12}(\eta).
\]
Employing \dlmf{10.2.2} and the monotonicity of $x\mapsto \frac{x}{1+x}$ on $\R_+$, we get
\begin{multline*}
\abs{\eta J_{l+\frac32}(\eta)Y_{l+\frac12}(\xi)
	-\xi J_{l+\frac32}(\xi)Y_{l+\frac12}(\eta)}\\
\begin{aligned}	
&\le C \left[|\eta| \left(\frac{\abs{\eta}}{1+\abs{\eta}}\right)^{l+\frac{3}{2}}
	   \left(\frac{1+\abs{\xi}}{\abs{\xi}}\right)^{l+\frac12} 
	   + |\xi| \left(\frac{\abs{\xi}}{1+\abs{\xi}}\right)^{l+\frac32}
	   \left(\frac{1+\abs{\eta}}{\abs{\eta}}\right)^{l+\frac12}\right]
	   \\
&\le C |\eta|\left(\frac{\abs{\eta}}{1+\abs{\eta}}\right)^{l + \frac32}
	   \left(\frac{1+\abs{\xi}}{\abs{\xi}}\right)^{l+\frac12} 
 \le C \left(\frac{\abs{\eta}}{1+\abs{\eta}}\right)^{l + \frac32}
       \left(\frac{1+\abs{\xi}}{\abs{\xi}}\right)^{l+\frac12}.
\end{aligned}			
\end{multline*}
Similarly, if $l>1/2$, then 
\begin{multline*}
\abs{\xi J_{l+\frac12}(\eta)Y_{l-\frac12}(\xi)
	-\eta J_{l+\frac12}(\xi)Y_{l-\frac12}(\eta)}\\
\begin{aligned}	
&\le C\left[|\xi| \left(\frac{\abs{\eta}}{1+\abs{\eta}}\right)^{l+\frac{1}{2}}
			\left(\frac{1+\abs{\xi}}{\abs{\xi}}\right)^{l-\frac12} +
			|\eta| \left(\frac{\abs{\xi}}{1+\abs{\xi}}\right)^{l+\frac12} \left(\frac{1+\abs{\eta}}{\abs{\eta}}\right)^{l-\frac12}\right]\\
&\le C |\eta|  \left(\frac{\abs{\eta}}{1+\abs{\eta}}\right)^{l+\frac12} \left(\frac{1+\abs{\xi}}{\abs{\xi}}\right)^{l-\frac12}
\le C  |\eta|\left(\frac{\abs{\eta}}{1+\abs{\eta}}\right)^{l+\frac32} \left(\frac{1+\abs{\xi}}{\abs{\xi}}\right)^{l+\frac12}.	
\end{aligned}			
\end{multline*}
If $|l| <1/2$, then using \eqref{eq:Jnu01} and \eqref{eq:Ynu01} we obtain
\begin{multline*}
\abs{\xi J_{l+\frac12}(\eta)Y_{l-\frac12}(\xi)
	-\eta J_{l+\frac12}(\xi)Y_{l-\frac12}(\eta)}\\
\begin{aligned}	
&\le C  \left[|\xi| \left(\frac{\abs{\eta}}{1+\abs{\eta}}\right)^{l+\frac{1}{2}}
			\left(\frac{\abs{\xi}}{1+\abs{\xi}}\right)^{\frac12 - l} +
			|\eta| \left(\frac{\abs{\xi}}{1+\abs{\xi}}\right)^{l+\frac12} \left(\frac{\abs{\eta}}{1+\abs{\eta}}\right)^{\frac12-l}\right]\\
&\le C  |\eta| \left(\frac{\abs{\eta}}{1+\abs{\eta}}\right)^{l+\frac32} \left(\frac{1+\abs{\xi}}{\abs{\xi}}\right)^{l+\frac12}. 
\end{aligned}			
\end{multline*}
Finally, for $l=1/2$ we get
\begin{align*}
\abs{\xi J_{1}(\eta)Y_{0}(\xi)
	-\eta J_{1}(\xi)Y_{0}(\eta)}
\le C& \frac{\abs{\eta}}{1+\abs{\eta}} \frac{\abs{\xi}}{1+\abs{\xi}}
	   \log\left(\frac{\abs{\eta}}{\abs{\xi}}\right)
	   \\
\le C& |\eta|\left(\frac{\abs{\eta}}{1+\abs{\eta}}\right)^{2} 
	   \left(\frac{1+\abs{\xi}}{\abs{\xi}}\right).
\end{align*}

Summarizing the above, we find that the function ${\mathcal G}_l$ admits
the following estimate
\[
|{\mathcal G}_l(\eta,\xi)| 
\le C|\eta|\left(\frac{\abs{\eta}}{1+\abs{\eta}}\right)^{l + \frac32}
	 \left(\frac{1+\abs{\xi}}{\abs{\xi}}\right)^{l+\frac12}
\]
if $0< |\xi|\le |\eta|\le 1 $ and $l > -1/2$.

{\em Step (ii):  $\abs{\xi}\le 1\le\abs{\eta}$.} First, we get
\begin{equation*}
\abs{\eta J_{l+\frac32}(\eta)Y_{l+\frac12}(\xi)
	-\xi J_{l+\frac32}(\xi)Y_{l+\frac12}(\eta)} 
\le C\sqrt{ |\eta|}\left(\frac{1+\abs{\xi}}{\abs{\xi}}\right)^{l+\frac12}\E^{\abs{\im\eta}}
\end{equation*}
as implied by \eqref{eq:asymp-J-infty} and \eqref{eq:asymp-Y-infty}. 
If $l>1/2$, we get 
\[
\abs{\xi J_{l+\frac12}(\eta)Y_{l-\frac12}(\xi)
	-\eta J_{l+\frac12}(\xi)Y_{l-\frac12}(\eta)} 
\le C\sqrt{ |\eta|}\left(\frac{1+\abs{\xi}}{\abs{\xi}}\right)^{l+\frac12}\E^{\abs{\im\eta}}\,.
\]
For $|l| <1/2$ we obtain
\[
\abs{\xi J_{l+\frac12}(\eta)Y_{l-\frac12}(\xi)
	-\eta J_{l+\frac12}(\xi)Y_{l-\frac12}(\eta)} 
\le C\sqrt{ |\eta|}\left(\frac{\abs{\xi}}{1+\abs{\xi}}\right)^{l+\frac12 }\E^{\abs{\im\eta}}\,.
\]
And finally, for $l=1/2$, we get
\begin{align*}
\abs{\xi J_{1}(\eta)Y_{0}(\xi)
	-\eta J_{1}(\xi)Y_{0}(\eta)}
\le C &\sqrt{ |\eta|}\frac{\abs{\xi}}{1+\abs{\xi}}
	  \log\left(\frac{1+\abs{\xi}}{\abs{\xi}}\right)\E^{\abs{\im\eta}}
	  \\
\le C& \sqrt{ |\eta|}\frac{1+\abs{\xi}}{\abs{\xi}}\E^{\abs{\im\eta}}\,.
\end{align*}

Summarizing the above, we find that the function ${\mathcal G}_l$ admits
the following estimate
\[
\abs{{\mathcal G}_l(\eta,\xi)} 
\le C\sqrt{\abs{\eta}}
     \left(\frac{1+\abs{\xi}}{\abs{\xi}}\right)^{l+\frac12}
     \E^{\abs{\im\eta}}
\]
if $0<\abs{\xi}\le 1\le \abs{\eta}$ and $l> -1/2$.

{\em Step (iii): $1\le\abs{\xi}\le\abs{\eta}$.} To deal with the remaining case we shall use the second equality in \eqref{eq:partial-z-Gl-proof} and the asymptotic 
expansions of Hankel functions \eqref{eq:asymp-H1-infty}--\eqref{eq:asymp-H2-infty}: 
\be
\abs{{\mathcal G}_l(\eta,\xi)} 
\sim 2\cos(\eta - \xi) \left(\sqrt{\frac{\eta}{\xi}} 
						   - \sqrt{\frac{\xi}{\eta}}\right)
\ee
as $|\eta|$, $|\xi|\to\infty$. Therefore, we get
\[
\abs{{\mathcal G}_l(\eta,\xi)} 
\le  C\sqrt{ \frac{|\eta|}{|\xi|}}\E^{\abs{\im(\eta - \xi)}}
\]
if $1\le  |\xi|\le  |\eta| $ and $l> -1/2$.

Combining all these estimates for the function ${\mathcal G}_l$ with 
the equality
\[
G_l(k^2,x,y) = \frac{\sqrt{xy}}{k} {\mathcal G}_l(kx,ky),
\]
after straightforward calculations we arrive at \eqref{eq:partial-z-Gl}.
\end{proof}

\begin{lemma}
\label{lem:about-partial-z}
Assume \eqref{q:hyp}. Then  
$\partial_k{\phi}(k^2,x)$ is a solution to the integral equation
\begin{multline}
\label{eq:partial-z-phi}
\partial_k {\phi}(k^2,x)
= \partial_k{\phi}_l(k^2,x)\\
 	+ \int_0^x [\partial_k G_l(k^2,x,y)] {\phi}(k^2,y) 
 	+ G_l(k^2,x,y) \partial_k{\phi}(k^2,y)]q(y)dy 
\end{multline}
and satisfies the estimate
\begin{equation}
\label{eq:partial-z-diff-phi}
\abss{\partial_k{\phi}(k^2,x) -\partial_k {\phi}_l(k^2,x)}
\le C|k|x \left(\frac{x}{1+ \abs{k} x}\right)^{l+2} \E^{\abs{\im\, k} x} 
	\int_0^x \frac{y\abs{q(y)}}{1 +\abs{k} y} dy.
\end{equation}
\end{lemma}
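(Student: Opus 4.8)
The plan is to differentiate the Volterra equation for $\phi$ with respect to $k$ and then run the same Gronwall-type iteration that yields \eqref{estphi}, only with an enlarged inhomogeneous term.

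First I would establish \eqref{eq:partial-z-phi}. Since $\phi(k^2,x)$ is entire in $z=k^2$ (hence holomorphic in $k$) for each fixed $x$, and $\phi_l(k^2,x)$ is entire in $k$ as well, one may differentiate the identity $\phi(k^2,x)=\phi_l(k^2,x)+\int_0^x G_l(k^2,x,y)q(y)\phi(k^2,y)\,dy$ with respect to $k$. The interchange of $\partial_k$ with $\int_0^x$ is justified by dominated convergence: on compact subsets of the $k$-domain and for $x\in[0,X]$, the bounds \eqref{estphil}, \eqref{estGl}, \eqref{estphi} give $\abs{G_l(k^2,x,y)q(y)\phi(k^2,y)}\le C\,y\abs{q(y)}$, and, together with \eqref{eq:partial-z-phil}, \eqref{eq:partial-z-Gl}, an analogous integrable majorant for $\partial_k(G_l q\phi)$ on $[0,X]$. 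Since $q$ is independent of $k$, the product rule then produces exactly \eqref{eq:partial-z-phi}.

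Next, set $u(x):=\partial_k\phi(k^2,x)-\partial_k\phi_l(k^2,x)$. Writing $\phi=\phi_l+(\phi-\phi_l)$ in \eqref{eq:partial-z-phi} and isolating the term containing $u$ turns \eqref{eq:partial-z-phi} into a Volterra equation $u(x)=r(x)+\int_0^x G_l(k^2,x,y)q(y)u(y)\,dy$ with the \emph{same} kernel $G_l q$ that governs $\phi$, where $r$ is the sum of the three source integrals built from $\partial_k G_l\cdot\phi_l$, $\partial_k G_l\cdot(\phi-\phi_l)$ and $G_l\cdot\partial_k\phi_l$. Inserting the bounds of Lemmas \ref{lem:b.1} and \ref{lem:part-z} and the estimate \eqref{estphi}, and using the monotonicity of $s\mapsto s/(1+\abs{k}s)$ together with $y\le x$ to absorb all surplus powers of $y/(1+\abs{k}y)$ into the $x$-dependent prefactor (for instance $y\,(y/(1+\abs{k}y))^2\le (x^2/(1+\abs{k}x))\,(y/(1+\abs{k}y))$ in the $G_l\cdot\partial_k\phi_l$ piece), each source integral is dominated by $C\abs{k}x(x/(1+\abs{k}x))^{l+2}\E^{\abs{\im k}x}$ times a finite power of $\int_0^x\frac{y\abs{q(y)}}{1+\abs{k}y}\,dy$. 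Hence $\abs{r(x)}\le C\abs{k}x(x/(1+\abs{k}x))^{l+2}\E^{\abs{\im k}x}\int_0^x\frac{y\abs{q(y)}}{1+\abs{k}y}\,dy$, with the constant depending on $\int_0^x\frac{y\abs{q(y)}}{1+\abs{k}y}\,dy$ in the same way as in \eqref{estphi}.

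Finally, iterate: with $u_0=r$ and $u_{n+1}(x)=\int_0^x G_l(k^2,x,y)q(y)u_n(y)\,dy$, the bound \eqref{estGl} and the same power manipulations as above give $\abs{u_n(x)}\le C^{n+1}\abs{k}x(x/(1+\abs{k}x))^{l+2}\E^{\abs{\im k}x}\,\frac{1}{(n+1)!}\big(\int_0^x\frac{y\abs{q(y)}}{1+\abs{k}y}\,dy\big)^{n+1}$, so that $u=\sum_{n\ge0}u_n$ converges and satisfies \eqref{eq:partial-z-diff-phi}. I expect the one genuinely delicate point to be the bound for $r(x)$: one must verify that the three structurally different contributions — each carrying a different combination of the powers $(y/(1+\abs{k}y))^{\pm l+\mathrm{const}}$ and of the extra $\abs{k}x$-gain present in \eqref{eq:partial-z-phil}, \eqref{eq:partial-z-Gl} relative to \eqref{estphil}, \eqref{estGl} — all collapse to the single clean bound above. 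Once this bookkeeping is carried out, the Volterra iteration is routine and identical in form to the one behind \eqref{estphi}.
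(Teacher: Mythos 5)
Your argument is correct and reaches \eqref{eq:partial-z-diff-phi} with the same key inputs (Lemmas \ref{lem:b.1} and \ref{lem:part-z} together with \eqref{estphi}), but it is organized differently from the paper. The paper never differentiates the integral equation: it builds the candidate derivative as the termwise derivative $\sum_n\beta_n$ of the successive-approximation series for $\phi$, with the $\beta_n$ obeying the coupled two-term recursion \eqref{eq:partial-z-first}, and proves $\abss{\beta_n}\le \frac{C^{n+1}}{n!}\,|k|x\left(\frac{x}{1+|k|x}\right)^{l+2}\E^{\abss{\im k}x}\left(\int_0^x\frac{y|q(y)|}{1+|k|y}dy\right)^{n}$ by induction, which sidesteps any justification of interchanging $\partial_k$ with the integral. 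You instead differentiate the Volterra equation directly and then solve a decoupled Volterra equation for $u=\partial_k\phi-\partial_k\phi_l$ with the unperturbed kernel $G_l q$ and the three-term inhomogeneity $r$; your bound on $r$ (in particular the monotonicity step $\frac{y^3}{(1+|k|y)^2}\le\frac{x^2}{1+|k|x}\cdot\frac{y}{1+|k|y}$ for $y\le x$), the factorial gain in the iteration, and the identification of $u$ with the iterated series via Volterra uniqueness all check out, with the same caveat as in the paper that the constant absorbs $\exp\bigl(C\int_0^x\frac{y|q(y)|}{1+|k|y}dy\bigr)$. The one point you should spell out is the dominated-convergence step: a majorant for the term $G_l\,q\,\partial_k\phi$ is not directly among \eqref{estphil}, \eqref{estGl}, \eqref{eq:partial-z-phil}, \eqref{eq:partial-z-Gl}; you need an a priori local bound $\partial_k\phi(k^2,y)=\OO(y^{l+1})$ near $y=0$, which follows from analyticity of $\phi$ in $z=k^2$ and Cauchy estimates applied to \eqref{estphil} and \eqref{estphi}. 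With that supplied, your route is a valid alternative: it buys a cleaner, decoupled iteration (no induction for the second summand of \eqref{eq:partial-z-first}) at the price of justifying differentiation under the integral sign, which the paper's termwise construction avoids.
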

\begin{proof}
The proof is based on the successive iteration procedure (see, e.g., \cite[Chap.~I.5]{cs}). As in the proof of Lemma 2.2 in \cite{kst}, set
\[
\phi = \sum_{n=0}^\infty \phi_{n},\quad \phi_0:=\phi_l,\quad \phi_n(k^2,x):=\int_0^x G_l(k^2,x,y)\phi_{n-1}(k^2,y)q(y)dy
\]
for all $n\in\N$. The series is absolutely convergent since 
\be\label{eq:phi_n}
\abs{{\phi}_n(k^2,x)}
\le \frac{C^{n+1}}{n!}
	\left(\frac{ x}{1+|k|x}\right)^{l+1}
	\E^{\abss{\im k} x}
	\left(\int_0^x \frac{y\abs{q(y)}}{1+|k|y}dy\right)^n.
\ee

Similarly, let us show that $\partial_k \phi(k^2,x)$ given by
\begin{gather}
\partial_k \phi = \sum_{n=0}^\infty \beta_n,\quad  \beta_0(k,x)
	:=\partial_k{\phi}_l(k^2,x),\label{eq:betaser}
	\\
\begin{multlined}	
\beta_{n}(k,x)
	:=\int_0^x \partial_k G_l(k^2,x,y)\,  {\phi}_{n-1}(k^2,y) q(y)dy
		\qquad\qquad\qquad\qquad
		\\
		+ \int_0^x G_l(k^2,x,y) \beta_{n-1}(k,y) q(y)dy,
		\quad n\in \N,\label{eq:partial-z-first}
\end{multlined}		 
\end{gather}
satisfies \eqref{eq:partial-z-phi}. Using \eqref{eq:phi_n} and 
\eqref{eq:partial-z-phil}, we can bound the first summand in 
\eqref{eq:partial-z-first} as follows
\begin{align*}
\abs{\text{1st term}}
&\le
\frac{C^{n+1}}{(n-1)!} |k|x
	\left(\frac{x}{1+|k|x}\right)^{l+2}\! \E^{\abss{\im k} x}
	\int_0^x \frac{y\abs{q(y)}}{1+|k|y}\left(\int_0^y 
	\frac{t\abs{q(t)}}{1+|k|t}dt\right)^{n-1}\!dy
	\\
&\le
\frac{C^{n+1}}{n!}|k| x \left(\frac{x}{1+|k|x}\right)^{l+2} \E^{\abss{\im k} x}
	\left(\int_0^x  \frac{y |q(y)|}{1+|k|y}dy\right)^{n}.
\end{align*}
Next, using induction, one can show that the second summand admits a 
similar bound and hence we finally get
\[
\abs{\beta_n(k,x)}
\le \frac{C^{n+1}}{n!} |k|x \left(\frac{x}{1+|k|x}\right)^{l+2} \E^{\abss{\im k} x}
	\left(\int_0^x  \frac{y |q(y)|}{1+|k|y}dy\right)^{n}.
\] 
This immediately implies the convergence of 
\eqref{eq:betaser} and, moreover, the estimate
\[
\abss{\partial_k{\phi}(k^2,x) -\partial_k{\phi}_l(k^2,x)}
\le \sum_{n=1}^\infty\abs{\beta_n(k,x)},
\]
from which \eqref{eq:partial-z-diff-phi} follows under the assumption
\eqref{q:hyp}.
\end{proof}

Furthermore, by \cite{fad, cc}, the regular solution $\phi$ admits a representation by means of transformation operators preserving the behavior of solutions at $x=0$ (see also \cite[Chap. III]{cs} for further details and historical remarks). 

\begin{lemma}[\cite{cc}]\label{lem:toGL}
Suppose  $q\in L^1_{\loc}([0,\infty))$. Then 
\be\label{eq:to_GL}
\phi(z,x) = \phi_l(z,x) + \int_0^x B(x,y) \phi_l(z,y) dy =: (I+B)\phi_l(z,x),
\ee
where the so-called Gelfand--Levitan kernel $B:\R_+^2\to \R$ satisfies the estimate
\be\label{eq:GLest}
|B(x,y)| \le \frac{1}{2} {\sigma}_0\left(\frac{x+y}{2}\right)\E^{{\sigma}_1(x)},\quad {\sigma}_j(x):=\int_0^x y^j |q(y)|dy,
\ee
for all $0<y<x$ and $j\in \{0,1\}$.
\end{lemma}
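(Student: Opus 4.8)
\emph{Approach.} I would prove this the classical way, via transformation (transmutation) operators, adapted to the strongly singular term $l(l+1)/x^2$: impose the ansatz \eqref{eq:to_GL}, translate $\tau\phi=z\phi$ into a characteristic Goursat problem for the kernel $B$, solve it by a Volterra iteration, and read off \eqref{eq:GLest} from the iteration. \emph{Step 1 (ansatz $\to$ Goursat problem).} Inserting $\phi=(I+B)\phi_l$ into $\tau\phi=z\phi$, using $\tau_l\phi_l=z\phi_l$ with $\tau_l:=-d^2/dx^2+l(l+1)/x^2$, differentiating under the integral and integrating by parts twice in $y$ (after replacing $z\phi_l(z,y)$ inside by $\tau_{l,y}\phi_l(z,y)$), one finds that $B$ must satisfy, on $\{0<y<x\}$, the hyperbolic equation
\[
\Bigl(-\partial_x^2+\tfrac{l(l+1)}{x^2}+q(x)\Bigr)B(x,y)=\Bigl(-\partial_y^2+\tfrac{l(l+1)}{y^2}\Bigr)B(x,y),
\]
together with the characteristic relation $\tfrac{d}{dx}B(x,x)=\tfrac12 q(x)$ coming from the jump across the characteristic $y=x$ (so, with $B(x,x)\to0$ as $x\to0$, $B(x,x)=\tfrac12\int_0^x q$), and a boundary condition at $y=0$ forcing the boundary terms there to vanish, compatible with the $y^{l+1}$-behaviour of $\phi_l$ in \eqref{eq:fs01}. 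Conversely, a solution with the bounds obtained below yields \eqref{eq:to_GL}: differentiation under the integral is licensed by the accompanying derivative bounds, and the identification $(I+B)\phi_l=\phi$ follows from uniqueness of the solution of $\tau f=zf$ with the prescribed leading behaviour at $x=0$.

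\emph{Step 2 (Riemann's method).} The coefficient $l(l+1)/x^2-l(l+1)/y^2$ is neither small nor integrable near $0$, so it cannot be treated perturbatively; instead it is split off via the Riemann function $\mathcal{R}_{H}(x,y;s,t)$ of the $q$-free singular operator $\partial_x^2-\partial_y^2-l(l+1)/x^2+l(l+1)/y^2$, which is known in closed form (a Legendre/Gegenbauer-type function of the cross-ratio of $x,y,s,t$). Riemann's representation formula on the characteristic triangle $T(x,y)$ with vertex $(x,y)$ — in the characteristic coordinates $\xi=x-y$, $\eta=x+y$ this is the triangle with corners $(x,y),(x-y,0),(x+y,0)$, i.e. $T(x,y)=\{(s,t):0\le t\le s,\ s-t\le x-y,\ s+t\le x+y\}$ — turns the Goursat problem of Step 1 into the linear Volterra equation
\[
B(x,y)=B_0(x,y)+\iint_{T(x,y)}\mathcal{R}_{H}(x,y;s,t)\,q(s)\,B(s,t)\,ds\,dt,
\]
where the free term $B_0$ is obtained by propagating the diagonal data $\tfrac12\int_0^\cdot q$ and the $y=0$ data through $\mathcal{R}_{H}$. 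What makes everything work is the a priori bound $|\mathcal{R}_{H}(x,y;s,t)|\le1$ on $T(x,y)$, together with the resulting $|B_0(x,y)|\le\tfrac12\,\sigma_0\bigl(\tfrac{x+y}{2}\bigr)$ (which uses $|\mathcal{R}_{H}|\le1$, the monotonicity of $\sigma_0$, and the fact that $(s+t)/2\le(x+y)/2$ throughout $T(x,y)$).

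\emph{Step 3 (iteration).} Solve by Picard iteration, $B=\sum_{n\ge0}B_n$ with $B_{n+1}(x,y)=\iint_{T(x,y)}\mathcal{R}_{H}(x,y;s,t)\,q(s)\,B_n(s,t)\,ds\,dt$. On $T(x,y)$ one has $s\le x$ (add the two defining inequalities), $(s+t)/2\le(x+y)/2$, and the $t$-fibre over $s$ has length $\le s$; combining this with $|\mathcal{R}_{H}|\le1$, the monotonicity of $\sigma_0$, the identity $d\sigma_1(s)=s|q(s)|\,ds$ (keeping $\sigma_1(s)^n$ under the $s$-integral, so that $\int_0^x s|q(s)|\sigma_1(s)^n\,ds=\sigma_1(x)^{n+1}/(n+1)$), and $|B_0(x,y)|\le\tfrac12\sigma_0\bigl(\tfrac{x+y}{2}\bigr)$, an induction gives
\[
|B_n(x,y)|\le\frac{1}{2\,n!}\,\sigma_0\Bigl(\frac{x+y}{2}\Bigr)\,\sigma_1(x)^n ,
\]
and summing over $n$ produces exactly \eqref{eq:GLest}; the finiteness of $\sigma_1(x)$ under $q\in L^1_{\loc}$ guarantees convergence for every $x$. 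Running the same scheme for $\partial_xB$ and $\partial_yB$ (differentiate the Volterra equation, use the explicit derivative of $\mathcal{R}_{H}$ and the diagonal relation) gives the locally uniform first-derivative bounds needed in Step 1.

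\emph{Main obstacle.} The only genuinely non-classical ingredient — and the point I expect to cost real work — is the Riemann function $\mathcal{R}_{H}$ of the singular Euler-type operator: one needs its explicit form, the uniform bound $|\mathcal{R}_{H}|\le1$ on the characteristic triangle (with no extra constant, since the lemma's factor is exactly $\tfrac12$), and its behaviour as $t\to0$, so that the $y=0$ data dictated by \eqref{eq:fs01} can be propagated and $B$ stays bounded up to $y=0$ and $x=0$. Everything else parallels the non-singular ($l=0$) theory; in particular, since $\phi_l$ — unlike $\theta_l$ — carries no logarithmic term, the half-integer values $l+\tfrac12\in\N_0$ need no separate discussion here. (One could instead iterate the integral equation for $\phi$ from the earlier lemma together with a linearization formula for products of $\phi_l$, but the Goursat route keeps the bookkeeping cleanest.)
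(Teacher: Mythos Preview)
The paper does not prove this lemma; it is quoted verbatim from \cite{cc} (Coz--Coudray) and no argument is given. Your outline --- derive the characteristic Goursat problem for $B$, resolve it via the Riemann function of the unperturbed singular operator $\partial_x^2-\partial_y^2-l(l+1)/x^2+l(l+1)/y^2$, and iterate the resulting Volterra equation --- is precisely the method of that reference (as the title ``The Riemann solution\ldots'' already suggests), and your identification of the one nontrivial ingredient, namely the explicit Legendre/Gegenbauer-type Riemann function together with the uniform bound $|\mathcal{R}_H|\le 1$ on the characteristic triangle, is exactly right. The bookkeeping in Step~3 is clean and reproduces \eqref{eq:GLest} with the sharp constant $\tfrac12$. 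One cosmetic remark: in this paper the symbol $\cRH$ is already reserved for the resolvent $(H-z)^{-1}$, so if you write this up here you should pick a different letter for the Riemann function.
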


In particular, this lemma immediately implies the following useful result.

\begin{corollary}\label{cor:Best}
Suppose $q\in L^1((0,1))$. Then $B$ is a bounded operator on $L^\infty((0,1))$.
\end{corollary}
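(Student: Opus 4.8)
The plan is to show that the Gelfand--Levitan operator $I+B$ maps $L^\infty((0,1))$ boundedly into itself, by checking that the integral operator with kernel $B(x,y)\chi_{\{0<y<x<1\}}$ is bounded on $L^\infty((0,1))$; boundedness of $I+B$ then follows by adding the identity. Since for a kernel operator $Tf(x)=\int_0^x B(x,y)f(y)\,dy$ one has $\norms{Tf}_{L^\infty((0,1))}\le \norms{f}_{L^\infty((0,1))}\,\sup_{0<x<1}\int_0^x |B(x,y)|\,dy$, it suffices to bound $\sup_{0<x<1}\int_0^x |B(x,y)|\,dy$.

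The key step is to insert the estimate \eqref{eq:GLest} from Lemma~\ref{lem:toGL}. For $0<x<1$ we get
\begin{align*}
\int_0^x |B(x,y)|\,dy
 &\le \frac{1}{2}\E^{\sigma_1(x)}\int_0^x \sigma_0\!\left(\frac{x+y}{2}\right)dy.
\end{align*}
Since $\sigma_0$ is nondecreasing and $\frac{x+y}{2}\le x\le 1$ for $0<y<x$, we can bound $\sigma_0\!\left(\frac{x+y}{2}\right)\le \sigma_0(x)\le \sigma_0(1)=\int_0^1|q(y)|\,dy$, and similarly $\sigma_1(x)\le\sigma_1(1)\le\sigma_0(1)$ (because $y\le 1$ on $(0,1)$, so $y|q(y)|\le|q(y)|$). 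Hence $\int_0^x |B(x,y)|\,dy\le \frac{1}{2}\,\sigma_0(1)\,\E^{\sigma_0(1)}$, which is finite and independent of $x$ precisely under the hypothesis $q\in L^1((0,1))$. Therefore $\norms{(I+B)f}_{L^\infty((0,1))}\le\bigl(1+\tfrac12\sigma_0(1)\E^{\sigma_0(1)}\bigr)\norms{f}_{L^\infty((0,1))}$, which proves the corollary.

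There is essentially no obstacle here: the only point requiring a little care is the monotonicity argument that lets one replace the running argument $\frac{x+y}{2}$ of $\sigma_0$ and the upper limit in $\sigma_1$ by the fixed endpoint $1$, using that $\sigma_0,\sigma_1$ are nondecreasing and that $x<1$; and the trivial but essential observation that on the bounded interval $(0,1)$ the weighted quantity $\sigma_1$ is controlled by the unweighted $\sigma_0$. Everything else is the standard Schur-type bound for a Volterra kernel acting on $L^\infty$.
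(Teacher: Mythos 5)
Your proof is correct and follows essentially the same route as the paper's: the Schur-type $L^\infty$ bound $\sup_{0<x<1}\int_0^x|B(x,y)|\,dy$, controlled by inserting the Gelfand--Levitan estimate \eqref{eq:GLest} and using monotonicity of $\sigma_0,\sigma_1$ together with $x\le 1$. The only cosmetic difference is that you further bound $\sigma_1(1)\le\sigma_0(1)$, whereas the paper simply keeps the factor $\E^{\sigma_1(1)}$; both are finite under $q\in L^1((0,1))$.
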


\begin{proof}
If $f\in L^\infty(\R_+)$, then using the estimate \eqref{eq:GLest} we get
\begin{multline*}
|(B f)(x)| 
= \Big|\int_0^x B(x,y) f(y) dy\Big| \le \|f\|_\infty \int_0^x |B(x,y)|dy 
\\
\le \frac{1}{2}\|f\|_\infty \E^{{\sigma}_1(1)} \int_0^x {\sigma}_0\Big(\frac{x+y}{2}\Big)dy 
\le \frac{1}{2}  \|f\|_\infty \E^{{\sigma}_1(1)} \sigma_0(1),
\end{multline*}
which proves the claim.
\end{proof}

\begin{remark}\label{rem:Best}
Note that $B$ is a bounded operator on $L^2((0,a))$ for all $a>0$. However, the estimate \eqref{eq:GLest} allows to show that its
norm behaves like $\OO(a)$ as $a\to \infty$ and hence $B$ might not be bounded on $L^2(\R_+)$. 
\end{remark}

\subsection{The singular Weyl function}

{\em The singular Weyl function} $m:\C\setminus\R\to \C$ is defined 
such that 
\be\label{eq:s_m}
\psi(z,x)= \theta(z,x) + m(z) \phi(z,x),\quad z\in\C\setminus\R, 
\ee
belongs to $L^2((1,\infty))$. Note that, while the first solution 
$\phi(z,x)$ is unique under the normalization \eqref{eq:fs01}, the 
second solution $\theta(z,x)$ is not, since for any real entire function $E$ 
the new solution $\ti{\theta}(z,x)=\theta(z,x)-E(z)\phi(z,x)$ also 
satisfies \eqref{eq:fs01}. Note that the corresponding singular 
$m$-function $\ti{m}$ is given by
\[
\ti{m}(z)=m(z)+E(z)
\]
in this case. Moreover, it was shown in \cite{kst2,kt} that 
the singular $m$-function \eqref{eq:s_m} admits the following integral 
representation 
\be\label{eq:int_rep}
m(z) = \ti{E}(z)+(1+z^2)^{\kappa_l}
	   \int_{\R}\Big(\frac{1}{\lam-z}-\frac{\lam}{1+\lam^2}\Big)
	   \frac{d\rho(\lam)}{(1+\lam^2)^{\kappa_l}},\quad z\notin\R.
\ee
Here $\kappa_l:=\floor{\frac{l}{2}+\frac{3}{4}}$ (with $\floor{.}$ the usual floor function), the function $\ti{E}$ is 
real entire, and $\rho:\R\to\R$ is a nondecreasing function satisfying
\[
\rho(\lam)=\frac{\rho(\lam+)+\rho(\lam-)}{2},\quad \rho(0)=0,\quad 
\int_{\R}\frac{d\rho(\lam)}{(1+\lam^2)^{\kappa_l+1}}<\infty.
\]
The operator $H$ is unitarily equivalent to multiplication by the 
independent variable in $L^2(\R,d\rho)$ and thus $\rho$ is called 
{\em the spectral function} and $d\rho$ is {\em the spectral measure}.
Indeed, one has $\mathcal{F}:L^2(\R_+)\to L^2(\R,d\rho)$ defined via
\[
\varphi(x)\mapsto\hat{\varphi}(\lambda)
:=\meanlim_{c\to\infty}\int_0^c\phi(\lambda,x)\varphi(x)dx,
\]
 and its inverse mapping 
$\mathcal{F}^{-1}:L^2(\R,d\rho)\to L^2(\R_+)$ given by
\[
\hat{\varphi}(\lambda)\mapsto\varphi(x)
	:=\meanlim_{r\to\infty}\int_{-r}^r\phi(\lambda,x)
      \hat{\varphi}(\lambda)\rho(d\lambda).
\] 
Here ``$\meanlim$'' denotes the limit in the corresponding $L^2$-norm.
Then, for any Borel function $f$, one has $\mathcal{F}f(H)\mathcal{F}^{-1}$ equal to multiplication 
by $f(\lambda)$.

We also remark that the value of $\kappa_l$ in \eqref{eq:int_rep} is 
the best possible one as the following extension of Marchenko's asymptotic 
formula  shows.

\begin{theorem}[\cite{kt2}]\label{th:main}
Suppose that $q$ satisfies \eqref{q:hyp} and  $m$ is the singular 
$m$-function \eqref{eq:s_m}. Then there is a real entire function $E$ 
such that in any nonreal sector,
\[
m(z) - E(z)=m_l(z)(1+o(1)),\quad |z|\to+\infty,
\]
where
\be\label{eq:II.11}
m_l(z) 
= \begin{dcases}
	\frac{-1}{\sin((l+\frac{1}{2})\pi)} (-z)^{l+\frac{1}{2}}, 
	& {l+\frac{1}{2}}\in\R_+\!\setminus \N_0,
	\\
	\frac{-1}{\pi} z^{l+\frac{1}{2}}\log(-z), 
	& {l+\frac{1}{2}} \in\N_0.
	\end{dcases}
\ee
Moreover, the spectral function satisfies
\be\label{eq:rhoMar}
\rho(\lam)=\rho_l(\lam)(1+o(1)),\quad \lam\to +\infty,
\ee
where
\[
\rho_l(\lam) = \frac{1}{\pi(l+\frac{3}{2})}\id_{[0,\infty)}(\lam) 
			   \lam^{l+\frac{3}{2}},  \qquad l \geq -\frac{1}{2}.
\]
\end{theorem}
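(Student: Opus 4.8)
The plan is to split the argument into (a) an explicit computation for the free model, (b) a localization showing that only $q$ near $x=0$ matters at high energy, (c) a finite-interval analysis via the transformation operator of Lemma~\ref{lem:toGL}, and (d) a Tauberian passage from $m$ to $\rho$. For (a): the solution of $-u''+\frac{l(l+1)}{x^2}u=zu$ lying in $L^2$ near $x=\infty$ (for $z\notin\R$) is, up to a $z$-dependent factor, $\sqrt{\frac{\pi}{2}\,\sqrt z\,x}\,H^{(1)}_{l+1/2}(\sqrt z\,x)$; expanding $H^{(1)}_{l+1/2}$ through $J_{\pm(l+1/2)}$ (respectively $J_{l+1/2}$ and $Y_{l+1/2}$ in the integer case) via the connection formulas from Appendix~\ref{sec:Bessel} and comparing with \eqref{eq:fs01} gives $\psi_l(z,x)=\theta_l(z,x)+m_l(z)\phi_l(z,x)$ with $m_l$ exactly as in \eqref{eq:II.11}, and in particular $\im m_l(\lambda+\I 0)=\lambda^{l+1/2}=\pi\rho_l'(\lambda)$ for $\lambda>0$.

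For (b), fix $a>0$ and let $m^D_{(0,a)}(z)$ be the Titchmarsh--Weyl coefficient of the problem on $(0,a)$ with the boundary condition \eqref{eq:bc} at $0$ and a Dirichlet condition at $a$. Since the Dirichlet solution is proportional to $\phi(z,a)\theta(z,x)-\theta(z,a)\phi(z,x)$, one has $m^D_{(0,a)}(z)=-\theta(z,a)/\phi(z,a)$, which depends only on $q|_{(0,a)}$, and evaluating $\psi(z,x)=\theta(z,x)+m(z)\phi(z,x)$ at $x=a$ gives the exact identity $m(z)=m^D_{(0,a)}(z)+\psi(z,a)/\phi(z,a)$. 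I would then show that the correction is negligible: $\psi(z,\cdot)$ is a constant multiple of the solution that is $L^2$ near $+\infty$, and iterating the differential equation on $(a,\infty)$ from $+\infty$ (which needs only $q\in L^1_{\loc}$ there and the decay of the free decaying solution), together with the lower bound $\abss{\phi(z,a)}\ge c\,\abss{\phi_l(z,a)}$ obtained from \eqref{estphi}, yields $\abss{\psi(z,a)/\phi(z,a)}=\OO\big(|z|^{l+1/2}\E^{-c'\abss{\im\sqrt z}}\big)$ as $|z|\to\infty$ in any fixed nonreal sector; equivalently, this is a standard comparison of singular Weyl functions sharing the same local behaviour at $x=0$. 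This is precisely why no decay hypothesis on $q$ at infinity enters the theorem.

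For (c) it remains to find the asymptotics of $-\theta(z,a)/\phi(z,a)$ at fixed $a$ as $|z|\to\infty$. Writing $\phi=C_lx^{l+1}\ti\phi$ and $\phi_l=C_lx^{l+1}\ti\phi_l$ as in \eqref{eq:fs01}, Lemma~\ref{lem:toGL} induces a Volterra operator $\ti B$ with $\ti\phi=(I+\ti B)\ti\phi_l$; since $\ti\phi_l(z,\cdot)\in L^\infty((0,a))$, the kernel bound \eqref{eq:GLest} (as in the proof of Corollary~\ref{cor:Best}) — or directly \eqref{estphi} together with $\int_0^a\frac{y\abss{q(y)}}{1+\abss{k}y}\,dy\to0$ — gives $\phi(z,a)=\phi_l(z,a)(1+o(1))$ in any sector. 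The second solution is more delicate because it is singular at $x=0$: for $\abs{l}<1$ the transformation operator applied to $\theta_l$ again produces an admissible $\theta$ with the correct $x^{-l}$ leading term, hence $\theta(z,a)=\theta_l(z,a)(1+o(1))$, whereas for larger $l$ the Gelfand--Levitan kernel does not vanish fast enough at $y=0$ and one has to use a suitably modified (higher-order) transformation operator or analyze the integral equation for $\ti\theta$ directly; in all cases, absorbing the intrinsic ambiguity of $\theta$ into a real entire function $E$, one gets $\theta(z,a)=\theta_l(z,a)(1+o(1))+E(z)\phi(z,a)$. Therefore $m^D_{(0,a)}(z)=-\frac{\theta_l(z,a)}{\phi_l(z,a)}(1+o(1))+E(z)=m^D_{l,(0,a)}(z)(1+o(1))+E(z)$, and applying (b) to the free problem gives $m^D_{l,(0,a)}(z)=m_l(z)(1+o(1))$; combining with (b) for $q$ yields $m(z)-E(z)=m_l(z)(1+o(1))$ uniformly in nonreal sectors.

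Finally, for (d), \eqref{eq:rhoMar} follows from the integral representation \eqref{eq:int_rep} and a Tauberian theorem for Cauchy transforms of nondecreasing functions: the monotonicity of $\rho$ together with the just-proved asymptotics of $m$ along nonreal rays forces $\rho(\lambda)=\frac1\pi\int_0^\lambda\mu^{l+1/2}(1+o(1))\,d\mu=\frac{\lambda^{l+3/2}}{\pi(l+3/2)}(1+o(1))=\rho_l(\lambda)(1+o(1))$. The main obstacle is step (c) for large $l$: controlling the singular second solution near $x=0$, where the Gelfand--Levitan transformation operator is not directly available, while keeping every error term genuinely $o(1)$ relative to the oscillatory Bessel factors making up $\phi_l(z,a)$ and $\theta_l(z,a)$; a secondary technical point is making the exponential-smallness estimate of step (b) uniform on sectors with no integrability of $q$ at infinity.
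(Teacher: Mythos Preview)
The paper does not prove Theorem~\ref{th:main}; it is quoted verbatim from \cite{kt2}, so there is no in-paper proof to compare against. Your outline is a reasonable route and is close in spirit to the argument in \cite{kt2}: localize to a neighbourhood of the singular endpoint, compare with the free Bessel model there, and pass from $m$ to $\rho$ by a Tauberian theorem.

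Your step (b) is sound and your ``secondary technical point'' is not a real obstacle: the difference $m(z)-m^D_{(0,a)}(z)=\psi(z,a)/\phi(z,a)$ is bounded by the Weyl-disk radius $\big(2\abss{\im z}\int_0^a\abss{\phi(z,x)}^2\,dx\big)^{-1}$, and a lower bound on $\int_0^a\abss{\phi}^2$ follows from \eqref{estphi} together with the exponential growth of $\phi_l$ along nonreal rays. This uses only $q|_{(0,a)}$, so no condition on $q$ at infinity enters, exactly as you anticipated.

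The genuine gap is step (c) for $l\ge 1$, which you flag but do not resolve. The Gelfand--Levitan operator $B$ of Lemma~\ref{lem:toGL} acts on $\phi_l$, but it cannot be applied to $\theta_l$: under the bound \eqref{eq:GLest} the integral $\int_0^x B(x,y)\theta_l(z,y)\,dy$ diverges near $y=0$ once $l\ge 1$, since $\theta_l(z,y)\sim y^{-l}$. Your fallback (``use a suitably modified higher-order transformation operator or analyse the integral equation for $\ti\theta$ directly'') is exactly the nontrivial content of \cite{kt2} (building on \cite{kt}): one has to construct $\theta$ through an integral equation whose inhomogeneity already absorbs the first few singular terms, and then prove that the remainder is $o(1)$ relative to the oscillating Bessel envelope of $\theta_l$ uniformly in sectors. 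Without that construction the claimed relation $\theta(z,a)=\theta_l(z,a)(1+o(1))+E(z)\phi(z,a)$ is unproven for general $l$, and with it the asymptotics of $m^D_{(0,a)}$; this is where the real entire correction $E$ actually arises, and it is the hardest part of the theorem.
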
   

Note that the formula \eqref{eq:rhoMar} was first announced in \cite{ka56}. For extensions of
Theorem~\ref{th:main} to the case when $q$ is a distribution in $H^{-1}_{\loc}$ we refer to \cite{ekt}.

\subsection{The Jost solution}
\label{sec:jsol}

In this subsection, we assume that the potential $q$ belongs to 
{\em the Marchenko class}, i.e., in addition to \eqref{q:hyp}, $q$ 
also satisfies 
\be\label{eq:q_mar}
\int_1^\infty x |q(x)|dx<\infty.
\ee
Recall that under these assumptions on $q$ the spectrum of $H$ is 
purely absolutely continuous on $(0,\infty)$ with an at most finite 
number of eigenvalues $\lam_n \in (-\infty,0]$.

Next we need some estimates for the Weyl solution $\psi$ defined by \eqref{eq:s_m}. We begin 
with some basic properties of the unperturbed Bessel equation in which
case the Weyl solution is given by
\[
\psi_l(k^2,x)	
= \I  k^{l+\frac{1}{2}}\sqrt{\frac{\pi x}{2}}
	H^{(1)}_{l+\frac{1}{2}}(k x),
\]
which is analytic in $\im\, k>0$ and continuous in $\im\, k\ge 0$. Here $H^{(1)}_{\nu}$ is the Hankel function
of the first kind (see Appendix \ref{sec:Bessel}). Its derivative is given by (cf. \eqref{eq:recurrence})
\[
\partial_k \psi_l(k^2,x) 
	= \I k^{l+\frac{1}{2}} x\sqrt{\frac{\pi x}{2}}
	  H^{(1)}_{l-\frac{1}{2}}(k x).
\]
The analog of Lemma~\ref{lem:b.1} reads:

\begin{lemma}\label{lem:b.3}

If $l>-1/2$, then for every $x>0$
\be
\label{est:psi_lA}
\abs{\psi_l(k^2,x)}
	\le C\left(\frac{x}{1+ \abs{k} x}\right)^{-l} \E^{-\abs{\im\, k} x}
\ee
and
\be 
\label{eq:partial-z-weyl-sol-free}
\abs{\partial_k \psi_l(k^2,x)}
	\le C \E^{-\abs{\im k} x} 
	\begin{dcases} 
	|k|x   \left(\frac{1+ \abs{k} x}{x}\right)^{l-1}, & l\ge \frac12,
	\\
	|k|^l x\left(\frac{\abs{k}x}{1+\abs{k} x}\right)^{l}, 
		   &\abs{l}<\frac{1}{2}.
	\end{dcases}
\ee
\end{lemma}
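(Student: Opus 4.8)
The plan is to derive both estimates directly from the explicit formulas
\[
\psi_l(k^2,x) = \I k^{l+\frac12}\sqrt{\tfrac{\pi x}{2}} H^{(1)}_{l+\frac12}(kx),
\qquad
\partial_k\psi_l(k^2,x) = \I k^{l+\frac12} x \sqrt{\tfrac{\pi x}{2}} H^{(1)}_{l-\frac12}(kx),
\]
together with the known bounds for the Hankel function $H^{(1)}_\nu$ collected in Appendix \ref{sec:Bessel}. The key point is that $|\psi_l|$ and $|\partial_k\psi_l|$ should be controlled by the small-argument behavior $H^{(1)}_\nu(w)\sim w^{-|\nu|}$ (with the logarithmic modification when $\nu=0$) for $|kx|\le 1$, and by the large-argument asymptotics $H^{(1)}_\nu(w)\sim w^{-1/2}\E^{\I w}$, giving the decaying exponential $\E^{-|\im k|x}$, for $|kx|\ge 1$. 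The factor $\bigl(\tfrac{x}{1+|k|x}\bigr)$ is precisely the quantity that interpolates between $x$ (when $|k|x\le1$) and $|k|^{-1}$ (when $|k|x\ge 1$), so one expresses all powers of $|kx|$ and $x$ in terms of it.

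First I would treat \eqref{est:psi_lA}. Write $\psi_l(k^2,x) = \I k^{l+\frac12}\sqrt{\tfrac{\pi x}{2}}H^{(1)}_{l+\frac12}(kx)$. For $|kx|\le 1$: since $l+\frac12>0$, the bound $|H^{(1)}_{l+\frac12}(kx)|\le C|kx|^{-l-\frac12}$ holds, so $|\psi_l(k^2,x)|\le C|k|^{l+\frac12}x^{1/2}|kx|^{-l-\frac12}=Cx^{-l}$, and since $|kx|\le1$ we have $x^{-l}\le C\bigl(\tfrac{x}{1+|k|x}\bigr)^{-l}$ (as $\tfrac{x}{1+|k|x}\ge \tfrac{x}{2}$), while $\E^{-|\im k|x}\le 1\le C$, giving the claim. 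For $|kx|\ge 1$: use $|H^{(1)}_{l+\frac12}(kx)|\le C|kx|^{-1/2}\E^{-|\im k|x}$ (valid for $\im k\ge0$), yielding $|\psi_l(k^2,x)|\le C|k|^{l+\frac12}x^{1/2}|kx|^{-1/2}\E^{-|\im k|x}=C|k|^{l}\E^{-|\im k|x}$; since $|kx|\ge1$ we have $|k|^l\le C\bigl(\tfrac{x}{1+|k|x}\bigr)^{-l}$ (because $\tfrac{x}{1+|k|x}\le \tfrac{1}{|k|}$), which finishes this case.

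Then I would do \eqref{eq:partial-z-weyl-sol-free}, where now the Hankel index is $l-\frac12$, which changes sign at $l=\frac12$, hence the case split. For $l\ge\frac12$: the index $l-\frac12\ge0$, so $|H^{(1)}_{l-\frac12}(kx)|\le C|kx|^{-l+\frac12}$ for $|kx|\le1$ (with a harmless $\log$ correction when $l=\frac12$ that is absorbed by enlarging the exponent slightly, using $|kx|\le1$) and $|H^{(1)}_{l-\frac12}(kx)|\le C|kx|^{-1/2}\E^{-|\im k|x}$ for $|kx|\ge1$; multiplying by $|k|^{l+\frac12}x\sqrt{x}$ and re-expressing powers of $|kx|$ through $\tfrac{x}{1+|k|x}$ produces $|k|x\bigl(\tfrac{1+|k|x}{x}\bigr)^{l-1}\E^{-|\im k|x}$ in both regimes. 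For $|l|<\frac12$: now $l-\frac12<0$, so $H^{(1)}_{l-\frac12}(kx)\sim (kx)^{-(1/2-l)}$ for small argument, and the same manipulation gives $|k|^l x\bigl(\tfrac{|k|x}{1+|k|x}\bigr)^l\E^{-|\im k|x}$; the large-argument regime is identical to before and one checks directly that the bound there is dominated by the stated one.

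The main obstacle is bookkeeping rather than conceptual: one must carefully verify, in each of the two argument regimes ($|kx|\le1$ versus $|kx|\ge1$) and each $l$-range, that the naive power of $|kx|$ and the explicit prefactor $|k|^{l+\frac12}x^{3/2}$ reassemble into exactly the asserted combination of $\bigl(\tfrac{x}{1+|k|x}\bigr)$-powers, using the two-sided estimates $\tfrac{x}{2}\le\tfrac{x}{1+|k|x}\le x$ for $|k|x\le1$ and $\tfrac{1}{2|k|}\le\tfrac{x}{1+|k|x}\le\tfrac{1}{|k|}$ for $|k|x\ge1$. The only genuinely delicate point is the borderline index $l-\frac12=0$ (i.e. $l=\frac12$), where $H^{(1)}_0$ has a logarithmic singularity at the origin; here one uses that $|\log|kx||\le C_\delta |kx|^{-\delta}$ for any $\delta>0$ and $|kx|\le1$, so the $\log$ is swallowed into the power, consistent with the fact that the $l\ge\frac12$ bound in \eqref{eq:partial-z-weyl-sol-free} is continuous in $l$ and the stated exponent $l-1$ is strict enough to absorb an arbitrarily small loss.
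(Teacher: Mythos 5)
Your overall route --- the explicit formulas $\psi_l(k^2,x)=\I k^{l+1/2}\sqrt{\pi x/2}\,H^{(1)}_{l+1/2}(kx)$ and $\partial_k\psi_l(k^2,x)=\I k^{l+1/2}x\sqrt{\pi x/2}\,H^{(1)}_{l-1/2}(kx)$, the split into the regimes $|k|x\le1$ and $|k|x\ge1$, and the two-sided comparisons $x/2\le x/(1+|k|x)\le x$ resp.\ $1/(2|k|)\le x/(1+|k|x)\le1/|k|$ --- is exactly the intended derivation (the paper states the lemma without proof, relying on the Bessel/Hankel asymptotics of Appendix~\ref{sec:Bessel}), and your bookkeeping is correct for \eqref{est:psi_lA} and for \eqref{eq:partial-z-weyl-sol-free} whenever $l\neq\frac12$. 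One small slip in the first estimate: in the regime $|k|x\le1$ you need the lower bound $\E^{-|\im k|x}\ge\E^{-1}$ (true because $|\im k|\,x\le|k|x\le1$) in order to insert the exponential on the right-hand side; the inequality ``$\E^{-|\im k|x}\le 1$'' points the wrong way.

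The genuine gap is the borderline case $l=\frac12$, where the derivative involves $H^{(1)}_0$. Your absorption argument fails there: replacing $|\log|kx||$ by $C_\delta|kx|^{-\delta}$ weakens, not strengthens, the bound, and the stated right-hand side has no slack to give away --- for $|k|x\le1$ it is comparable to $|k|x^{3/2}$, while $|\partial_k\psi_{1/2}(k^2,x)|=|k|x\sqrt{\pi x/2}\,\abs{H^{(1)}_0(kx)}\sim c\,|k|x^{3/2}\,|\log(|k|x)|$, so the ratio of the left-hand side to the claimed right-hand side diverges as $|k|x\to0$ (even for fixed $x$, letting $k\to0$). Continuity of the exponent $l-1$ in $l$ does not rescue this, since the admissible constant blows up as $l\downarrow\frac12$ (through $\Gamma(l-\frac12)$ in the small-argument behavior of $Y_{l-1/2}$). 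Hence at $l=\frac12$ the estimate in the form \eqref{eq:partial-z-weyl-sol-free} cannot be obtained by your argument; an honest statement requires either an extra factor $1+|\log(|k|x)|$ (equivalently an arbitrarily small power loss) in the region $|k|x\le1$, or restricting the first case to $l>\frac12$. You should record this correction explicitly rather than claim the logarithm is harmless.
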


A solution $f(k,\cdot)$ to $\tau y = k^2y$ satisfying the following asymptotic normalization 
\be\label{eq:JostSol}
f(k,x) = \E^{\I k x}(1 + o(1)),\qquad f'(k,x) = \I k \E^{\I k x}(1 + o(1))
\ee
as $x\to \infty$,  is called {\em the Jost solution}. 
In the case $q=0$ we have (cf. \eqref{eq:asymp-H1-infty})
\[
f_l(k,x)
	= \frac{\E^{\I\frac{\pi l}{2}}}{k^l} \psi_l(k^2,x)
	= \I\E^{\I\frac{\pi l}{2}}\sqrt{\frac{\pi xk}{2}}
	H_{l+\frac12}^{(1)}(kx).
\]
\begin{lemma}\label{lem:b.4}
Assume \eqref{eq:q_mar}. Then $f(k,x)$ satisfies the integral equation
\[
f(k,x) 
	= f_l(k,x) - \int_x^\infty G_l(k^2,x,y) q(y) f(k,y) dy.
\]
If $l>-1/2$, then for all $x>0$, $f(\cdot,x)$ is analytic in the upper half plane and can 
be continuously extended to the real axis away from $k=0$ and
\be
\label{estpsi}
\abss{f(k,x) - f_l(k,x)}
	\leq C \left(\frac{\abs{k} x}{1+ \abs{k} x}\right)^{-l} 
	\E^{-\abs{\im\, k}\, x} \int_x^\infty 
	\frac{y q(y)}{1 +\abs{k} y} dy.
\ee
Moreover, the function $h(k,x):= \E^{-\I k x} f(k,x)$ satisfies the estimates
\be\label{est:hlk}
\abs{\partial _k h_l(k,x)} 
	\le \frac{C}{x\abs{k}^2} 
		\left(\frac{1+ \abs{k} x}{\abs{k} x}\right)^{l-1},
\ee
and 
\be\label{est:hk}
\abs{\partial _k h(k,x) -  \partial _k h_l(k,x)} 
	\le \frac{C}{\abs{k}} 
	\left(\frac{1+ \abs{k} x}{\abs{k} x}\right)^{l} 
	\int_x^\infty y|q(y)|dy .
\ee
\end{lemma}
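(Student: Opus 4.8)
The plan is to build the Jost solution directly from the stated integral equation and to read off every estimate from the corresponding iteration scheme, much as in the proof of Lemma~\ref{lem:about-partial-z} (cf.\ \cite[Lemma~2.2]{kst}). Since $G_l(z,\cdot,\cdot)$ is entire in $z$ and is the Green's kernel of $-d^2/dx^2+l(l+1)/x^2-z$ with unit Wronskian, applying this operator to the right-hand side of the stated equation shows that any bounded solution $f$ of it satisfies $\tau f=k^2f$, while the normalization \eqref{eq:JostSol} follows because the integral term is $o(1)$ at $+\infty$. Setting $f=\sum_{n\ge 0}f_n$ with $f_0=f_l$ and $f_n(k,x)=-\int_x^\infty G_l(k^2,x,y)q(y)f_{n-1}(k,y)\,dy$, one uses \eqref{est:psi_lA} to get $|f_l(k,x)|\le C\,M(k,x)$, where $M(k,x):=\bigl(|k|x/(1+|k|x)\bigr)^{-l}\E^{-|\im k|x}$, and \eqref{estGl} with $x,y$ interchanged (using $G_l(k^2,x,y)=-G_l(k^2,y,x)$) to get $|G_l(k^2,x,y)|\le C\bigl(\tfrac{y}{1+|k|y}\bigr)^{l+1}\bigl(\tfrac{1+|k|x}{x}\bigr)^{l}\E^{|\im k|(y-x)}$ for $x\le y$. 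The bookkeeping identity $|G_l(k^2,x,y)|M(k,y)\le C\tfrac{y}{1+|k|y}M(k,x)$ then gives inductively $|f_n(k,x)|\le\frac{C^{n+1}}{n!}M(k,x)I(x)^n$ with $I(x):=\int_x^\infty\tfrac{y|q(y)|}{1+|k|y}\,dy\le I(0)<\infty$, since $\int_x^\infty\tfrac{y|q(y)|}{1+|k|y}I(y)^{n-1}\,dy=I(x)^n/n$. Summing yields \eqref{estpsi}; as each $f_n(\cdot,x)$ is analytic in $\im k>0$ and the series converges locally uniformly away from $k=0$, $f(\cdot,x)$ is analytic there and extends continuously to $\R\setminus\{0\}$.

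\emph{The free derivative estimate \eqref{est:hlk}.} Write $h_l(k,x)=\I\E^{\I\pi l/2}\hat h_{l+1/2}(kx)$ with $\hat h_\nu(z):=\sqrt{\pi z/2}\,\E^{-\I z}H^{(1)}_\nu(z)$, so that $\partial_k h_l(k,x)=\I\E^{\I\pi l/2}\,x\,\hat h_\nu'(kx)$, $\nu=l+\tfrac12$, and \eqref{est:hlk} is equivalent to $|\hat h_\nu'(z)|\le C|z|^{-2}\bigl((1+|z|)/|z|\bigr)^{\nu-3/2}$. From the recurrences \eqref{eq:recurrence} one obtains $\hat h_\nu'(z)=\hat h_{\nu-1}(z)-\I\hat h_\nu(z)+\tfrac{1-2\nu}{2z}\hat h_\nu(z)$. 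For $|z|\le 1$ this gives the bound from the small-argument behavior of $H^{(1)}_\nu$ and $H^{(1)}_{\nu-1}$, the term $\tfrac{1-2\nu}{2z}\hat h_\nu(z)\sim|z|^{-\nu-1/2}$ being dominant. For $|z|\ge 1$ one inserts the large-argument expansion $\hat h_\mu(z)=c_\mu\bigl(1+\tfrac{\I(4\mu^2-1)}{8z}+O(|z|^{-2})\bigr)$ with $c_{\nu-1}=\I c_\nu$ and checks that the $O(1)$ and the $O(1/z)$ contributions to the right-hand side cancel, so $\hat h_\nu'(z)=O(|z|^{-2})$. This two-fold cancellation is the main technical point.

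\emph{The perturbed derivative estimate \eqref{est:hk}.} Multiplying the integral equation for $f$ by $\E^{-\I kx}$ and writing $f(k,y)=\E^{\I ky}h(k,y)$ gives $h(k,x)=h_l(k,x)-\int_x^\infty K_l(k,x,y)q(y)h(k,y)\,dy$ with $K_l(k,x,y):=\E^{\I k(y-x)}G_l(k^2,x,y)$. Since $\im k\ge 0$ and $y\ge x$, the exponential cancels the one in the bound for $G_l$, so $|K_l(k,x,y)|\le C\bigl(\tfrac{y}{1+|k|y}\bigr)^{l+1}\bigl(\tfrac{1+|k|x}{x}\bigr)^{l}$, and using \eqref{eq:partial-z-Gl} together with $y-x\le y$ and $\tfrac{|k|y^2}{1+|k|y}\le y$ one likewise gets $|\partial_k K_l(k,x,y)|\le C\,y\bigl(\tfrac{y}{1+|k|y}\bigr)^{l+1}\bigl(\tfrac{1+|k|x}{x}\bigr)^{l}$. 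Differentiating (the boundary term at $y=x$ drops out because $G_l(k^2,x,x)=0$, and termwise differentiation is justified by these bounds) produces the Volterra equation
\[
\partial_k h(k,x)=\partial_k h_l(k,x)-\int_x^\infty[\partial_k K_l(k,x,y)]q(y)h(k,y)\,dy-\int_x^\infty K_l(k,x,y)q(y)\,\partial_k h(k,y)\,dy .
\]
From $|h(k,y)|\le C\bigl((1+|k|y)/(|k|y)\bigr)^{l}$ (a consequence of \eqref{estpsi}, \eqref{est:psi_lA} and $\im k\ge 0$), the kernel bounds, the inequality $\tfrac{y^2}{1+|k|y}\le\tfrac{y}{|k|}$, and \eqref{est:hlk} in the weaker form $|\partial_k h_l(k,x)|\le\tfrac{C}{|k|}\bigl((1+|k|x)/(|k|x)\bigr)^{l}$, both the first term and the $\int[\partial_k K_l]qh$ term are bounded by $\tfrac{C}{|k|}\bigl((1+|k|x)/(|k|x)\bigr)^{l}\int_x^\infty y|q(y)|\,dy$. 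Iterating the Volterra operator $u\mapsto\int_x^\infty K_l q\,u$ and using $\int_x^\infty\tfrac{y|q(y)|}{1+|k|y}J(y)^n\,dy=J(x)^{n+1}/(n+1)$ with $J(x):=\int_x^\infty\tfrac{y|q(y)|}{1+|k|y}\,dy$, the weight $\tfrac{1}{|k|}\bigl((1+|k|x)/(|k|x)\bigr)^{l}$ is reproduced at each step, so the series converges and yields $|\partial_k h(k,x)|\le\tfrac{C}{|k|}\bigl((1+|k|x)/(|k|x)\bigr)^{l}$; feeding this back into the last integral of the Volterra equation gives \eqref{est:hk}. The two places where care is needed are the cancellation in the free estimate above and the verification that the factor $y-x$ produced by $\partial_k K_l$ is harmless, which is exactly the point where $y-x\le y$ is used.
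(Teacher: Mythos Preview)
Your argument is correct and, for the integral equation, the iteration producing \eqref{estpsi}, and the derivation of \eqref{est:hk}, it is essentially identical to the paper's own proof: the paper also writes $f=\sum_n f_n$ with the same recursion, obtains the same bound \eqref{eq:f_n}, introduces $\tilde G_l(k,x,y)=G_l(k^2,x,y)\E^{\I k(y-x)}$ (your $K_l$), sets up the series $\partial_k h=\sum_n g_n$, bounds $\partial_k\tilde G_l$ from \eqref{eq:partial-z-Gl}, and closes the induction exactly as you do.

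The one genuine difference is in the proof of the free estimate \eqref{est:hlk}. The paper does \emph{not} argue via the recurrence identity $\hat h_\nu'=\hat h_{\nu-1}-\I\hat h_\nu+\tfrac{1-2\nu}{2z}\hat h_\nu$ and a two-term asymptotic cancellation; instead it invokes Watson's integral representation
\[
h_l(k,x)=\frac{1}{\Gamma(l+1)}\int_0^\infty \E^{-t}t^{l}\Big(1+\frac{\I t}{2kx}\Big)^{l}dt,
\]
differentiates under the integral sign, and estimates the resulting integral directly (this is done in Appendix~\ref{sec:Bessel}). Your route is more elementary in that it uses only the standard recurrences \eqref{eq:recurrence} and the first two terms of the large-argument expansion of $H^{(1)}_\nu$, at the cost of having to verify by hand that both the $O(1)$ and the $O(z^{-1})$ contributions cancel (which you do correctly, via $c_{\nu-1}=\I c_\nu$ and the explicit coefficients $(4\mu^2-1)/8$). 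The paper's route avoids this bookkeeping entirely: once the integral formula is written down, a single differentiation produces the factor $k^{-2}x^{-1}$ automatically, and the remaining integral is bounded by elementary means. Either approach yields \eqref{est:hlk}.
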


\begin{proof}
The proof is based on the successive iteration procedure. Set
\[
f = \sum_{n=0}^\infty f_{n},\quad 
f_0 := f_l,\quad 
f_n(k,x) := -\int_x^\infty G_l(k^2,x,y)f_{n-1}(k,y)q(y)dy
\]
for all $n\in\N$. The series is absolutely convergent since 
\be\label{eq:f_n}
\abs{{f}_n(k,x)}
\le \frac{C^{n+1}}{n!}
	\left(\frac{1+|k|x}{|k|x}\right)^{l}
	\E^{\abss{\im k} x}
	\left(\int_x^\infty \frac{y\abs{q(y)}}{1+|k|y}dy\right)^n.
\ee
The latter also proves \eqref{estphi}.

The proof of \eqref{est:hlk} is given in Appendix \ref{sec:Bessel}. It remains to prove \eqref{est:hk}. 
First, notice that $h$ solves the following equation
\[ 
h(k,x) = h_l(k,x) - \int_x^\infty \tilde{G}_l(k,x,y) q(y) h(k,y) dy,
\quad 
\ti{G}_l(k,x,y):= G_l(k^2,x,y)\E^{\I(y-x)}.
\]
Then setting 
\begin{gather}
\partial_k h = \sum_{n=0}^\infty g_n,\quad  g_0(k,x)
	:=\partial_k{h}_l(k,x),\label{eq:gser}
	\\
\begin{multlined}	
g_{n}(k,x)
	:=\int_x^\infty \partial_k \ti{G}_l(k,x,y)\,  {h}_{n-1}(k,y) q(y)dy
		\qquad\qquad\qquad\qquad
		\\
		+ \int_x^\infty \ti{G}_l(k,x,y) g_{n-1}(k,y) q(y)dy,
		\quad n\in \N,\label{eq:g_rec},
\end{multlined}		 
\end{gather}
we need to show that it satisfies the integral equation
\begin{multline}
\label{eq:partial-h}
\partial_k {h}(k,x)
= \partial_k{h}_l(k,x)\\
 	- \int_x^\infty [\partial_k \ti{G}_l(k,x,y)] {h}(k,y) + \ti{G}_l(k,x,y) \partial_k{h}(k,y)]q(y)dy. 
\end{multline}
 It easily follows from  \eqref{eq:partial-z-Gl} that
\[
\abs{\partial_k \ti{G}_l(k,x,y)} \le C |k|y \left(\frac{|k|y}{1+|k|y}\right)^{l+2}\left(\frac{1+|k|x}{|k|x}\right)^{l}, \quad 0<x \le y.
\]
Therefore, using \eqref{eq:f_n}, we can bound the first summand in 
\eqref{eq:g_rec} as follows
\begin{align*}
\abs{\text{1st term}}
&\le
\frac{C^{n+1}}{(n-1)!} 
	\left(\frac{1+|k|x}{|k|x}\right)^{l} 
	\int_x^\infty  \frac{|k|y^3\abs{q(y)}}{(1+|k|y)^2}\left(\int_y^\infty 
	\frac{t\abs{q(t)}}{1+|k|t}dt\right)^{n-1}dy
	\\
&\le
\frac{C^{n+1}}{n!}  \frac{1}{|k|}\left(\frac{1+|k|x}{|k|x}\right)^{l} 
	\left(\int_x^\infty y |q(y)| dy\right)^{n}.
\end{align*}
Next, using induction, one can show that the second summand admits a similar bound and hence we finally get
\[
\abs{g_n(k,x)}
\le \frac{C^{n+1}}{n!} \frac{1}{k} \left(\frac{1+|k|x}{|k|x}\right)^{l} 
	\left(\int_x^\infty  {y |q(y)|} dy\right)^{n}.
\] 
This immediately implies the convergence of 
\eqref{eq:gser} and, moreover, the estimate
\[
\abss{\partial_k{h}(k,x) -\partial_k{h}_l(k,x)}
\le \sum_{n=1}^\infty\abs{g_n(k,x)},
\]
from which \eqref{est:hk} follows under the assumption \eqref{eq:q_mar}.
\end{proof}

Furthermore, by \cite{fad, cc}, the Jost solution $f$ admits a representation by means of transformation operators preserving the behavior of solutions at infinity (see also \cite[Chap. V]{cs} for further details and historical remarks). 

\begin{lemma}[\cite{cc}]\label{lem:to}
Suppose  $\int_1^\infty (x+x^l)|q(x)| dx <\infty$. Then 
\be\label{eq:to_Mar}
f(k,x) = f_l(k,x) + \int_x^\infty K(x,y) f_l(k,y) dy =: (I+K)f_l(k,x),
\ee
where the so-called Marchenko kernel $K:\R^2\to \R$ satisfies the estimate
\be\label{eq:MAest}
|K(x,y)| \le \frac{1}{2} \left(\frac{2}{x}\right)^{l}\ti{\sigma}_l\left(\frac{x+y}{2}\right)\E^{\ti{\sigma}_1(x)},\quad \ti{\sigma}_j(x):=\int_x^\infty y^j |q(y)|dy,
\ee
for all $x<y<\infty$ and $j\in \{1,l\}$.
\end{lemma}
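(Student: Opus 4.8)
The plan is to construct the kernel $K$ as the solution of a Goursat problem and then identify $(I+K)f_l$ with $f$ by uniqueness of the Jost solution. Put $u(k,x):=f_l(k,x)+\int_x^\infty K(x,y)f_l(k,y)\,dy$ for a triangular kernel $K$ (so that $K(x,y)=0$ for $y<x$) which decays as $y\to\infty$. Since $f_l(k,\cdot)$ solves the unperturbed Bessel equation $-w''+\frac{l(l+1)}{x^2}w=k^2w$, inserting this ansatz into $\tau u=k^2u$ and integrating by parts twice in $y$ (using the decay of $f_l(k,y)$ and $\partial_y f_l(k,y)$ as $y\to\infty$) shows, after collecting the boundary terms at $y=x$, that $u$ solves $\tau u=k^2u$ for every $k$ precisely when $K$ satisfies
\begin{gather*}
K_{xx}(x,y)-K_{yy}(x,y)=\Big(q(x)+\frac{l(l+1)}{x^2}-\frac{l(l+1)}{y^2}\Big)K(x,y),\qquad 0<x<y,\\
K(x,x)=\frac{1}{2}\int_x^\infty q(t)\,dt,\qquad \lim_{y\to\infty}K(x,y)=0.
\end{gather*}
Once such a $K$ is produced, the decay of $K(x,\cdot)$ together with the asymptotics \eqref{eq:JostSol} of $f_l$ forces the same normalization for $u$, so that $u=f$ and \eqref{eq:to_Mar} holds. (This is the counterpart at $x=+\infty$ of the construction behind Lemma~\ref{lem:toGL} at $x=0$; cf.\ \cite{fad, cc}.)

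To solve the Goursat problem I would pass to characteristic coordinates $u=\frac{x+y}{2}\ge v=\frac{y-x}{2}>0$, so that the PDE becomes $\partial_u\partial_v\widetilde K=-\big(q(u-v)+\frac{l(l+1)}{(u-v)^2}-\frac{l(l+1)}{(u+v)^2}\big)\widetilde K$ with $\widetilde K(u,0)=\frac{1}{2}\int_u^\infty q$. The key point is \emph{not} to iterate in the full coefficient: one first absorbs the singular part into the Riemann function $\cR_l$ of the unperturbed equation $w_{xx}-\frac{l(l+1)}{x^2}w=w_{yy}-\frac{l(l+1)}{y^2}w$ --- which is known explicitly in terms of hypergeometric functions and carries the weight $x^{-l}$ reflecting the behavior $f_l(k,x)=\OO(x^{-l})$ of the free Jost solution near $x=0$ --- so that $K$ solves an integral equation of the form $K=K_0+\iint\cR_l(\,\cdot\,)\,q(\xi)\,K(\xi,\eta)\,d\xi\,d\eta$, with $K_0$ the $\cR_l$--transform of the diagonal data $\frac12\int q$. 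This equation is then solved by successive approximation in powers of $q$ alone. The moment condition $\int_1^\infty(x+x^l)|q(x)|\,dx<\infty$ makes $\ti{\sigma}_1$ and $\ti{\sigma}_l$ finite, so the Neumann series converges, and summing the iterates yields exactly \eqref{eq:MAest}: the factor $\ti{\sigma}_l(\frac{x+y}{2})\,\E^{\ti{\sigma}_1(x)}$ comes from the $q$--iteration just as in the classical Marchenko case $l=0$, and the weight $(2/x)^l$ is inherited from $\cR_l$.

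The main obstacle is exactly the centrifugal term $l(l+1)/x^2$. Unlike $q$, it is not integrable against $x\,dx$, and $\int_x^{(x+y)/2}s^{-2}\,ds=\OO(x^{-1})$ blows up as $x\to0$; hence treating it perturbatively on the same footing as $q$ makes the iteration diverge --- it would in fact resum to a power of $\frac{x+y}{2x}$ of the wrong and much larger order, rather than the correct $(2/x)^l$. It therefore has to be handled exactly, either via the Riemann function $\cR_l$ as above, or by carrying the weight $x^{-l}$ through all the estimates (equivalently, the substitution $K(x,y)=x^{-l}L(x,y)$, which turns the singular zeroth--order term into a first--order one that can be controlled). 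Getting the exponent to come out exactly $l$, and matching the constant $\frac12$ and the argument $\frac{x+y}{2}$ in \eqref{eq:MAest}, is where the delicate bookkeeping lies; the remaining ingredients --- enough regularity of $K$ (from $q\in L^1_\loc$) to justify the two integrations by parts, and the check of the Jost normalization of $u$ --- are routine. A more computational alternative avoids the PDE altogether: iterate the integral equation of Lemma~\ref{lem:b.4}, using a linearization identity that writes $-G_l(k^2,x,y)f_l(k,y)$ as an integral $\int_x^\infty\mathcal N_l(x,y,s)\,f_l(k,s)\,ds$ of the free Jost solution --- whose $l=0$ instance is the elementary identity $-\frac{\sin(k(x-y))}{k}\E^{\I ky}=\frac12\int_x^{2y-x}\E^{\I ks}\,ds$ --- and then run the usual Marchenko combinatorics on $\mathcal N_l$.
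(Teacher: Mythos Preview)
The paper does not prove this lemma; it is quoted verbatim from \cite{cc} and stated without proof. Your sketch is in fact an accurate outline of the argument in \cite{cc}: the key idea there is precisely to write $K$ as the solution of the Goursat problem you describe and to solve it via the Riemann function $\cR_l$ of the unperturbed hyperbolic operator, so that the centrifugal singularity is handled exactly and only $q$ enters the iteration. Your diagnosis of the obstacle --- that treating $l(l+1)/x^2$ perturbatively on the same footing as $q$ would make the series diverge near $x=0$ --- and your identification of the origin of the weight $(2/x)^l$ in $\cR_l$ are both correct and match \cite{cc}. So there is no discrepancy to flag: your proposal reconstructs the cited proof rather than offering an alternative.
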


In particular, this lemma immediately implies the following useful result.

\begin{corollary}\label{cor:Kest}
Suppose $\int_1^\infty (x+x^{l+1})|q(x)| dx <\infty$. Then $K$ is a bounded operator on $L^\infty((1,\infty))$.
\end{corollary}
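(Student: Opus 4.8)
The plan is to mimic the proof of Corollary~\ref{cor:Best}, now working on the half-line $(1,\infty)$ with the Marchenko kernel $K$ in place of the Gelfand--Levitan kernel $B$. Given $f\in L^\infty((1,\infty))$ we estimate pointwise, using \eqref{eq:MAest},
\[
|(Kf)(x)| \le \|f\|_\infty \int_x^\infty |K(x,y)|\,dy
\le \frac{\|f\|_\infty}{2}\left(\frac{2}{x}\right)^{l}\E^{\ti{\sigma}_1(1)} \int_x^\infty \ti{\sigma}_l\Big(\frac{x+y}{2}\Big)\,dy,
\]
valid for $x\ge 1$, where we have bounded $\E^{\ti{\sigma}_1(x)}\le \E^{\ti{\sigma}_1(1)}$ since $x\mapsto\ti{\sigma}_1(x)$ is nonincreasing. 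The remaining task is to show that the factor $\big(\frac{2}{x}\big)^{l}\int_x^\infty \ti{\sigma}_l\big(\frac{x+y}{2}\big)\,dy$ is bounded uniformly for $x\ge 1$ under the hypothesis $\int_1^\infty(x+x^{l+1})|q(x)|\,dx<\infty$.

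First I would change variables $u=\frac{x+y}{2}$, so $dy=2\,du$ and the integral becomes $2\int_x^\infty \ti\sigma_l(u)\,du$ (using $u$ ranges over $(x,\infty)$ since $y\in(x,\infty)$, and $u\ge x\ge 1$ throughout). Then I interchange the order of integration in the double integral $\int_x^\infty\!\!\int_u^\infty t^l|q(t)|\,dt\,du = \int_x^\infty t^l|q(t)|\,(t-x)\,dt \le \int_x^\infty t^{l+1}|q(t)|\,dt$, where the last bound uses $t-x\le t$ for $t\ge x\ge 0$. Hence
\[
\Big(\frac{2}{x}\Big)^{l}\int_x^\infty \ti{\sigma}_l\Big(\frac{x+y}{2}\Big)\,dy
\le \frac{2^{l+2}}{x^{l}}\int_x^\infty t^{l+1}|q(t)|\,dt
\le 2^{l+2}\int_x^\infty t|q(t)|\,dt \le 2^{l+2}\int_1^\infty t|q(t)|\,dt,
\]
where for the middle step I used $t^{l+1}/x^l = t\,(t/x)^l\le t$ since $t\ge x$ (valid for $l\ge 0$; for $-1/2<l<0$ one instead uses $x^{-l}\le 1$ and bounds $t^{l+1}\le t$, noting $t\ge 1$). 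Combining the displays yields $\|Kf\|_{L^\infty((1,\infty))} \le 2^{l+1}\E^{\ti\sigma_1(1)}\big(\int_1^\infty t|q(t)|\,dt\big)\|f\|_\infty$, which is the claim; note only the weaker hypothesis $\int_1^\infty (t+t^{l+1})|q(t)|\,dt<\infty$ (in fact just $\int_1^\infty t|q(t)|\,dt<\infty$ together with whatever is needed to make Lemma~\ref{lem:to} applicable, namely $\int_1^\infty(t+t^l)|q(t)|\,dt<\infty$) is actually used.

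The only mildly delicate point — and the place to be careful — is the bookkeeping with the exponent $l$: the weight $\big(\frac{2}{x}\big)^l$ is decreasing in $x$ for $l>0$ but increasing for $l<0$, so the estimate $t^{l+1}/x^l\le t$ must be justified separately in the two sign regimes, using $t\ge x$ when $l\ge 0$ and using $x\ge 1$ (so $x^{-l}\le x^{l}\cdot x^{-2l}$... more simply $x^{-l}\le 1$ is false for $l<0$) — concretely, for $-1/2<l<0$ write $\frac{t^{l+1}}{x^l}=t\cdot\frac{t^l}{x^l}$ and since $0<l+$? here $t^l\le 1\le x^{-l}$ because $t\ge x\ge1$ and $l<0$ gives $t^l\le x^l$, hence $t^l/x^l\le 1$; thus $t^{l+1}/x^l\le t$ in both cases. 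Apart from this sign check, everything is a routine interchange of integrals and the proof is short; I would present it exactly in the style of the proof of Corollary~\ref{cor:Best}.
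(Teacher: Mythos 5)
Your overall strategy is the same as the paper's: bound $|(Kf)(x)|\le\|f\|_\infty\int_x^\infty|K(x,y)|\,dy$, insert \eqref{eq:MAest}, and control $\left(\tfrac{2}{x}\right)^l\int_x^\infty\ti\sigma_l\big(\tfrac{x+y}{2}\big)dy$ uniformly for $x\ge1$ by a change of variables and an interchange of integration (the paper does exactly this, enlarging the domain to $(1,\infty)$ and swapping the order, ending with the bound $2^l\|f\|_\infty\ti\sigma_{l+1}(1)\E^{\ti\sigma_1(1)}$). However, your middle inequality is false when $l>0$: for $t\ge x\ge1$ you claim $t^{l+1}/x^l=t\,(t/x)^l\le t$ ``since $t\ge x$'', but $t/x\ge1$ and $l>0$ give $(t/x)^l\ge1$, so the inequality goes the wrong way (take $x=1$ and $t$ large). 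Consequently the step $\frac{1}{x^l}\int_x^\infty t^{l+1}|q(t)|\,dt\le\int_x^\infty t|q(t)|\,dt$ fails for $l>0$, and with it your parenthetical claim that only $\int_1^\infty t|q(t)|\,dt<\infty$ is really used. That claim cannot be rescued within this kernel estimate: after Fubini the quantity to control is $\frac{1}{x^l}\int_x^\infty t^l(t-x)|q(t)|\,dt$, which already at $x=1$ is finite (given the $t^l$ moment from Lemma~\ref{lem:to}) if and only if $\int_1^\infty t^{l+1}|q(t)|\,dt<\infty$; this is precisely why the $x^{l+1}$ moment appears in the hypothesis and why the paper's final constant is $\ti\sigma_{l+1}(1)$.

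The fix is immediate and lands you on the paper's proof: for $l\ge0$ and $x\ge1$ simply use $x^{-l}\le1$, so that $\frac{2^{l+1}}{x^l}\int_x^\infty t^{l+1}|q(t)|\,dt\le 2^{l+1}\ti\sigma_{l+1}(1)<\infty$ by hypothesis, and then combine with $\E^{\ti\sigma_1(x)}\le\E^{\ti\sigma_1(1)}$ as you did. Your treatment of the regime $-\tfrac12<l<0$ (keeping the factor $x^{-l}$ and using $t^l\le x^l$ for $t\ge x\ge1$) is correct, and is in fact a point where your argument is more careful than the one-line bound $\left(\tfrac{2}{x}\right)^l\le2^l$ in the paper, which is only valid for $l\ge0$. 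Apart from the sign error for $l>0$ (and a harmless factor-of-two slack, $2^{l+2}$ versus $2^{l+1}$), the computation coincides with the paper's.
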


\begin{proof}
If $f\in L^\infty(\R_+)$, then using the estimate \eqref{eq:MAest} we get
\begin{align*}
|(K f)(x)| =& \Big|\int_x^\infty K(x,y) f(y) dy\Big| \le \|f\|_\infty \int_x^\infty |K(x,y)|dy \\
&\le \frac{1}{2}\|f\|_\infty \E^{\ti{\sigma}_1(1)} \left(\frac{2}{x}\right)^{l} \int_x^\infty \ti{\sigma}_l\Big(\frac{x+y}{2}\Big)dy\\
&\le 2^{l-1}  \|f\|_\infty \E^{\ti{\sigma}_1(1)} \int_1^\infty \int_{(1+y)/2}^\infty t^l |q(t)|dt\,dy \\
&= 2^{l-1}  \|f\|_\infty \E^{\ti{\sigma}_1(1)} \int_1^\infty \int_1^{2t - 1}  t^l |q(t)|dy\,dt \le 2^l\|f\|_\infty \ti{\sigma}_{l+1}(1)\E^{\ti{\sigma}_1(1)},
\end{align*}
which proves the claim.
\end{proof}

\subsection{The Jost function} 
By Lemma \ref{lem:b.4}, the Jost solution is analytic in the upper half plane and can 
be continuously extended to the real axis away from $k=0$. We can 
extend it to the lower half plane by setting 
$f(k,x) =f(-k,x)= f(k^*,x)^*$ for $\im(k)<0$.  For $k\in\R\setminus\{0\}$ we obtain two solutions $f(k,x)$ and 
$f(-k,x)=f(k,x)^*$ of the same equation whose Wronskian is given by (cf. \eqref{eq:JostSol})
\be\label{eq:wrfkpm}
W(f(-k,.),f(k,.))= 2\I k.
\ee
{\em The Jost function} is defined as
\be\label{eq:JostFunct}
f(k) = W(f(k,.),\phi(k^2,.))
\ee
and we also set
\[
g(k) = W(f(k,.),\theta(k^2,.))
\]
such that
\be
\label{eq:5.7}
f(k,x) = f(k) \theta(k^2,x) - g(k) \phi(k^2,x) = f(k) \psi(k^2,x).
\ee
In particular, the Weyl $m$-function \eqref{eq:s_m} is given by
\[
m(k^2) = -\frac{g(k)}{f(k)},\quad k\in\C_+.
\]
Note that both $f(k)$ and $g(k)$ are analytic in the upper half plane 
and $f(k)$ has simple zeros at $\I \kappa_n = \sqrt{\lam_n}\in\C_+$. 

Since $f(k,x)^*=f(-k,x)$ for $k\in\R\setminus\{0\}$, we obtain 
$f(k)^*=f(-k)$ and $g(k)^*=g(-k)$. Moreover, \eqref{eq:wrfkpm} shows
\be\label{eq:phif}
\phi(k^2,x) 
	= \frac{f(-k)}{2\I k} f(k,x) - \frac{f(k)}{2\I k} f(-k,x), 
	  \quad k\in\R\setminus\{0\},
\ee
and by \eqref{eq:5.7} we get
\[
2\I \im(f(k) g(k)^*)
	= f(k)g(k)^* -  f(k)^* g(k) 
	= W(f(-k,\cdot),f(k,\cdot))= 2\I k.
\]
Hence
\be\label{eq:imm=f}
\im\,m(k^2) = - \frac{\im\big(f(k)^*g(k)\big)}{|f(k)|^2} 
			= \frac{k}{\abs{f(k)}^2}, \quad k\in\R\setminus\{0\},
\ee
implying
\be\label{eq:rho}
d\rho(\lam) = \id_{(0,\infty)}(\lam) 
	\frac{\sqrt{\lam}}{\pi\abss{f(\sqrt{\lam})}^2} d\lam + 
	\sum_n \gam_n d\theta(\lam-\lam_n),
\ee
where $\gam_n^{-1} = \|\phi(\lam_n,\cdot)\|^2_{L^(\R_+)}$
are the usual norming constants. Since $-\gam_n$ equals the residue 
of $m(z)$ at $\lam_n$ we obtain
\[
\dot{f}(\I\kappa_n) 
	= -2\I\kappa_n\frac{g(\I\kappa_n)}{\gam_n}, \qquad f(\I\kappa_n,x)
	= g(\I\kappa_n) \phi(\lam_n,x) .
\]
Note that 
\[
f_l(k):= W(f_l(k,.),\phi_l(k^2,.)) = {k^{-l}}\E^{\I\frac{\pi l}{2}}, \quad 0\le \arg(k) < \pi.
\]
Thus, by Theorem \ref{th:main} and \eqref{eq:rho}, 
\be\label{eq:JFasymp}
\abs{f(k)}={\abs{f_l(k)}}(1+o(1))={\abs{k}^{-l}}(1+o(1)),\quad k\to \infty.
\ee

Finally, consider the following function
\be\label{eq:a.F}
F(k) := \E^{-\I\frac{\pi l}{2}}\, k^l f(k) = \frac{f(k)}{f_l(k)}  = \E^{-\I\frac{\pi l}{2}}\, k^l W(f(k,.),\phi(k^2,.)),\quad 
	  \im\, k\ge 0. 
\ee
Note that if we use
\be\label{eq:tipsi}
\ti\psi(k,x) = \frac{f(k,x)}{f_l(k)} = \E^{-\I\frac{\pi l}{2}} k^l f(k,x)
\ee
instead of $f(k,x)$, then $\ti\psi(\cdot,x)$ is analytic in the upper half plane and can 
be continuously extended to the whole real axis and \eqref{estpsi} now reads
\be\label{eq:tipsiest}
\abss{\ti\psi(k,x) - \psi_l(k,x)}
	\leq C \left(\frac{x}{1+ \abs{k} x}\right)^{-l} 
	\E^{-\abs{\im\, k}\, x} \int_x^\infty 
	\frac{y q(y)}{1 +\abs{k} y} dy.
\ee

\begin{lemma}[\cite{kt2}]\label{lem:b.5}
Assume \eqref{q:hyp} and \eqref{eq:q_mar}. Then the function $F$ admits 
the following integral representation
\begin{align}
F(k) &= 1+\int_0^\infty \psi_l(k^2,x)\phi(k^2,x) q(x)dx,\label{eq:intr_F}
	 \\	 
     &= 1+ \E^{-\I\frac{\pi l}{2}}\, k^l 
     	\int_0^\infty f(k,x)\phi_l(k^2,x) q(x)dx.\nn
\end{align}
If $l>-1/2$, then $F$ is analytic in $\C_+$, continuous and bounded on $\im\, k\ge 0$ and 
\be\label{eq:F=1}
F(k)=1+o(1)
\ee
as $\abs{k}\to \infty$ in $\im\, k\ge 0$. 
\end{lemma}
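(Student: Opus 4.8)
The plan is to obtain both integral representations from the Lagrange (Green's) identity, and then to read off the analytic properties from the first representation together with the a priori bounds of Lemmas~\ref{lem:b.1} and \ref{lem:b.3} and the estimate \eqref{estphi}.

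If $u$ solves $\tau u=k^2u$ and $v$ solves the unperturbed equation $-v''+\frac{l(l+1)}{x^2}v=k^2v$ for the same $k$, then, since $u''=\big(\frac{l(l+1)}{x^2}+q-k^2\big)u$ and $v''=\big(\frac{l(l+1)}{x^2}-k^2\big)v$, one has $\frac{d}{dx}W(u,v)=uv''-u''v=-q\,uv$. Taking $(u,v)=(f(k,\cdot),\phi_l(k^2,\cdot))$ and integrating over $(0,\infty)$ gives $W(f(k,\cdot),\phi_l(k^2,\cdot))\big|_{x\to\infty}-W(f(k,\cdot),\phi_l(k^2,\cdot))\big|_{x\to0^+}=-\int_0^\infty q(x)f(k,x)\phi_l(k^2,x)\,dx$, and once the two boundary values are identified as $f_l(k)=k^{-l}\E^{\I\pi l/2}$ and $f(k)$, respectively, dividing by $f_l(k)$ and recalling \eqref{eq:a.F} yields the second representation. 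Taking instead $(u,v)=(\phi(k^2,\cdot),\psi_l(k^2,\cdot))$ (note $\psi_l(k^2,\cdot)=\E^{-\I\pi l/2}k^l f_l(k,\cdot)$ does solve the unperturbed equation) gives $W(\psi_l(k^2,\cdot),\phi(k^2,\cdot))\big|_{x\to\infty}-W(\psi_l(k^2,\cdot),\phi(k^2,\cdot))\big|_{x\to0^+}=\int_0^\infty q(x)\psi_l(k^2,x)\phi(k^2,x)\,dx$, and the first representation follows once these boundary values are shown to equal $F(k)$ and $1$.

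The identification of the boundary terms is the crux, and the main obstacle. At $x\to0^+$ one uses that, by \eqref{estphi} and its differentiated version, $\phi(k^2,x)-\phi_l(k^2,x)=o(x^{l+1})$ with $\partial_x(\phi-\phi_l)(x)=o(x^l)$, while $f(k,x)$ and $\psi_l(k^2,x)$ are $O(x^{-l})$ with derivatives $O(x^{-l-1})$ (by \eqref{eq:5.7} and \eqref{eq:fs01} for $f$, and the small argument behaviour of $H^{(1)}_{l+1/2}$ for $\psi_l$); hence the Wronskian of $f$ or $\psi_l$ against $\phi-\phi_l$ tends to $0$, so $W(f(k,\cdot),\phi_l(k^2,\cdot))|_{0^+}=W(f(k,\cdot),\phi(k^2,\cdot))=f(k)$ and $W(\psi_l(k^2,\cdot),\phi(k^2,\cdot))|_{0^+}=W(\psi_l(k^2,\cdot),\phi_l(k^2,\cdot))$, the latter being $x$-independent and equal to $\E^{-\I\pi l/2}k^l\,W(f_l(k,\cdot),\phi_l(k^2,\cdot))=\E^{-\I\pi l/2}k^l\cdot k^{-l}\E^{\I\pi l/2}=1$. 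At $x\to\infty$, writing $W(f(k,\cdot),\phi_l(k^2,\cdot))=W(f_l(k,\cdot),\phi_l(k^2,\cdot))+W(f(k,\cdot)-f_l(k,\cdot),\phi_l(k^2,\cdot))$, the first term is $x$-independent and equals $f_l(k)$, while by \eqref{estpsi} and \eqref{estphil} the second tends to $0$ since $|f(k,x)-f_l(k,x)|\,|\phi_l(k^2,x)|\lesssim\int_x^\infty\frac{y|q(y)|}{1+|k|y}\,dy\to0$ (the exponential factors cancel, the polynomial prefactors stay bounded), and similarly for the derivative term; the same argument applied to $W(f_l(k,\cdot),\phi(k^2,\cdot))$ gives $\lim_{x\to\infty}W(f_l(k,\cdot),\phi(k^2,\cdot))=W(f(k,\cdot),\phi(k^2,\cdot))=f(k)$ when $\im k>0$, whence $W(\psi_l(k^2,\cdot),\phi(k^2,\cdot))|_{x\to\infty}=\E^{-\I\pi l/2}k^l f(k)=F(k)$; for $k\in\R\setminus\{0\}$ one instead inserts \eqref{eq:phif} and computes $W(f_l(k,\cdot),f(\pm k,\cdot))|_{x\to\infty}$ directly from the asymptotic normalization \eqref{eq:JostSol}, obtaining $0$ and $-2\I k$, which again gives $W(f_l(k,\cdot),\phi(k^2,\cdot))|_{x\to\infty}=f(k)$.

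Finally, the remaining assertions follow softly from the first representation. Combining \eqref{est:psi_lA}, \eqref{estphil} and \eqref{estphi} one gets $|\psi_l(k^2,x)\phi(k^2,x)|\le C\big(1+\int_0^\infty y|q(y)|\,dy\big)\frac{x}{1+|k|x}$ for all $\im k\ge0$, so that
\[
|F(k)-1|\le C'\int_0^\infty\frac{x|q(x)|}{1+|k|x}\,dx\le C'\int_0^\infty x|q(x)|\,dx<\infty
\]
by \eqref{q:hyp} and \eqref{eq:q_mar}; thus $F$ is bounded on $\{\im k\ge0\}$, with $x|q(x)|$ a $k$-independent integrable majorant of the integrand. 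Since $\psi_l(k^2,x)$ is analytic in $\C_+$ and continuous on $\{\im k\ge0\}$ and $\phi(k^2,x)$ is entire in $k^2$, the integrand is analytic in $k$ and the convergence is locally uniform, so $F$ is analytic in $\C_+$ and, by dominated convergence, continuous up to the real axis (including $k=0$). Finally, $\frac{x}{1+|k|x}\to0$ as $|k|\to\infty$ for each fixed $x>0$, so the same majorant and dominated convergence give $\int_0^\infty\psi_l(k^2,x)\phi(k^2,x)q(x)\,dx\to0$, i.e.\ $F(k)=1+o(1)$ as $|k|\to\infty$ in $\{\im k\ge0\}$.
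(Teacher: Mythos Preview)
The paper does not supply its own proof of this lemma; it is quoted from \cite{kt2}. Your derivation via the Lagrange identity $\frac{d}{dx}W(u,v)=-q\,uv$ (for $u$ a perturbed and $v$ an unperturbed solution) is the standard route, and the last paragraph---deducing boundedness, continuity, analyticity, and \eqref{eq:F=1} from the pointwise majorant $|\psi_l(k^2,x)\phi(k^2,x)|\le C\,\tfrac{x}{1+|k|x}$ together with dominated convergence---is clean and correct.

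The one place that deserves more care is the evaluation of the Wronskian boundary terms. Saying ``similarly for the derivative term'' is not quite sufficient: the estimates \eqref{estphil}, \eqref{estphi}, \eqref{estpsi}, \eqref{est:psi_lA} you quote control only the solutions themselves, not their $x$-derivatives, so they do not directly bound $W(f-f_l,\phi_l)$ or $W(\psi_l,\phi-\phi_l)$. The quickest fix is to compute these Wronskians explicitly from the Volterra integral equations rather than to estimate the two terms separately. For instance, writing $f-f_l=-\phi_l\,A+\theta_l\,B$ with $A(x)=\int_x^\infty\theta_l\,q\,f$ and $B(x)=\int_x^\infty\phi_l\,q\,f$ (from Lemma~\ref{lem:b.4} and the definition of $G_l$), one finds after a short computation that the $A$-contributions cancel and
\[
W\big(f(k,\cdot)-f_l(k,\cdot),\phi_l(k^2,\cdot)\big)(x)=W(\theta_l,\phi_l)\int_x^\infty \phi_l(k^2,y)\,q(y)\,f(k,y)\,dy,
\]
which tends to $0$ as $x\to\infty$ since $|\phi_l\,f|\le C\,\tfrac{y}{1+|k|y}$. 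The analogous computation using the integral equation for $\phi$ handles $W(\psi_l,\phi-\phi_l)$ at $x\to0^+$ and $W(f_l-f,\phi)$ at $x\to\infty$. With this in place your argument is complete.
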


\begin{remark}
Note that $\phi(k^2,x)$ and $\phi_l(k^2,x)$ have the same leading 
asymptotics as $\abs{k}\to\infty$. Also,
\[
2k\,\psi_l(k^2,x)\,\phi_l(k^2,x) \to 1, \quad |k|\to\infty.
\]
Since $\abs{k\,\psi_l(k^2,x)\,\phi(k^2,x)} \le C$, 
dominated convergence implies that
\be\label{eq:FestRefined}
F(k) = 1 + \frac{\I}{2k} \int_0^\infty q(x) dx +o(k^{-1})
\ee
as $|k|\to \infty$ provided that $q\in L^1(0,\infty)$.
\end{remark}

We also will need the behavior of $F$ and $F'$ near zero. The next lemma is well known and we give its proof for the sake of completeness.

\begin{lemma}\label{lem:resonance}
Let $l>-1/2$ and assume \eqref{q:hyp} and \eqref{eq:q_mar}. The following conditions are equivalent:
\begin{enumerate}[label=\emph{(\roman*)}, ref=(\roman*), leftmargin=*, widest=ii]
\item $F(0)=0$,
\item  $\phi(0,.)$ and $\ti{\psi}(0,.)$ are linearly dependent,
\item  $\phi(0,x) \sim C x^{-l}$ as $x\to \infty$,
\item there is either a resonance (if $l\in (-1/2,1/2]$) or an eigenvalue (if $l>1/2$). 
\end{enumerate}
\end{lemma}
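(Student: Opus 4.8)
The plan is to reduce all four conditions to the geometry of two distinguished solutions of $\tau y=0$: the regular solution $\phi(0,\cdot)$ and the renormalized Jost solution $\ti\psi(0,\cdot)$, obtained from \eqref{eq:tipsi} by continuous extension to $k=0$. Both are genuine solutions of $\tau y=0$, and by Lemma~\ref{lem:b.4}, \eqref{eq:tipsiest} and the discussion around \eqref{eq:tipsi} they, together with their $x$-derivatives, depend continuously on $k$ up to the real axis (including $k=0$). The first step is the identity $F(k)=W(\ti\psi(k,\cdot),\phi(k^2,\cdot))$, immediate from \eqref{eq:a.F}--\eqref{eq:tipsi} and bilinearity of the Wronskian; letting $k\to0$ gives
\[
F(0)=W\big(\ti\psi(0,\cdot),\phi(0,\cdot)\big).
\]
The second step records the behavior of $\ti\psi(0,\cdot)$ at $+\infty$: the small-argument expansion of $H^{(1)}_{l+1/2}$ (Appendix~\ref{sec:Bessel}) gives $\psi_l(0,x)=D_l\,x^{-l}$ with $D_l\neq0$, and \eqref{eq:tipsiest} at $k=0$ (equivalently, iterating $\ti\psi(0,x)=\psi_l(0,x)-\int_x^\infty G_l(0,x,y)q(y)\ti\psi(0,y)\,dy$) together with \eqref{eq:q_mar} yields $\ti\psi(0,x)=D_l\,x^{-l}(1+o(1))$ as $x\to\infty$; in particular $\ti\psi(0,\cdot)\not\equiv0$, while $\phi(0,\cdot)\not\equiv0$ since $\phi(0,x)=C_l\,x^{l+1}(1+o(1))$ near $x=0$ by \eqref{eq:fs01}.

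With these preliminaries in hand, (i)$\Leftrightarrow$(ii) is the elementary fact that the (constant) Wronskian of two solutions of a second order linear equation vanishes precisely when the solutions are linearly dependent, applied to the nonzero solutions $\ti\psi(0,\cdot)$ and $\phi(0,\cdot)$. For (ii)$\Leftrightarrow$(iii) I would analyze the solution space of $\tau y=0$ near $+\infty$ by reduction of order: since $\ti\psi(0,\cdot)$ is nonvanishing for large $x$, a linearly independent companion is $w(x)=\ti\psi(0,x)\int_{x_0}^x \ti\psi(0,t)^{-2}\,dt$, and because $\int_{x_0}^x t^{2l}\,dt\sim\frac{x^{2l+1}}{2l+1}$ diverges as $x\to\infty$ (here the hypothesis $l>-\frac12$ is used in an essential way) one obtains $w(x)\sim c\,x^{l+1}$ with $c\neq0$. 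Consequently every nonzero solution of $\tau y=0$ is either a scalar multiple of $\ti\psi(0,\cdot)$, hence asymptotic to a nonzero multiple of $x^{-l}$, or has a nonzero $w$-component, hence asymptotic to a nonzero multiple of $x^{l+1}$; since $l+1\neq-l$ these two rates are incompatible, so $\phi(0,x)\sim C x^{-l}$ (with $C\neq0$ automatically, as $\phi(0,\cdot)\not\equiv0$) holds if and only if the $w$-component of $\phi(0,\cdot)$ vanishes, i.e.\ if and only if $\phi(0,\cdot)$ is proportional to $\ti\psi(0,\cdot)$.

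Finally, for (iii)$\Leftrightarrow$(iv) I would combine $\phi(0,x)\sim Cx^{-l}$ at $+\infty$ with $\phi(0,x)\sim C_l\,x^{l+1}$ near $x=0$: the behavior near $0$ is always square integrable there and compatible with the boundary condition \eqref{eq:bc} for $|l|<\frac12$ (while $x=0$ is limit point for $l\ge\frac12$), so $\phi(0,\cdot)\in L^2(\R_+)$ if and only if $\int_1^\infty x^{-2l}\,dx<\infty$, i.e.\ if and only if $l>\frac12$. Thus, under (iii), for $l>\frac12$ the solution $\phi(0,\cdot)$ is an eigenfunction of $H$ at the edge $0$, and for $l\in(-\frac12,\frac12]$ it is a bounded non-$L^2$ generalized eigenfunction satisfying the boundary condition at $0$, which is what is called a zero-energy resonance in this setting. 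Conversely, any zero-energy eigenvalue or resonance is generated by the regular solution $\phi(0,\cdot)$ (it must lie in the operator domain near $0$), and its being square integrable, respectively subdominant, at $+\infty$ forces the $w$-component to vanish by the dichotomy above, giving (iii).

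The step I expect to require the real work is the asymptotic analysis behind (ii)$\Leftrightarrow$(iii): one must establish that $\ti\psi(0,\cdot)$ behaves like a \emph{nonzero} multiple of $x^{-l}$ --- mere $O(x^{-l})$ control would not suffice for the reduction-of-order computation --- and, as a consequence, that a solution of $\tau y=0$ not proportional to $\ti\psi(0,\cdot)$ genuinely grows like $x^{l+1}$. Everything else reduces to bookkeeping with the integral equations of Section~\ref{wa-sec} and elementary Wronskian identities.
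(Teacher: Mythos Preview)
Your proof is correct and, for (i)$\Leftrightarrow$(ii) and (ii)$\Leftrightarrow$(iii), follows the same line as the paper's --- you simply make explicit the reduction-of-order argument the paper compresses into the single sentence ``the latter is further equivalent to (iii) since $\ti\psi(0,x)\sim Cx^{-l}$ as $x\to\infty$.'' Your identification of the sharp asymptotic $\ti\psi(0,x)=D_l x^{-l}(1+o(1))$ as the nontrivial ingredient is exactly right.

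For (iv) your route differs from the paper's. You link (iv) to (iii) by reading off square-integrability or subdominance of $\phi(0,\cdot)$ directly from the asymptotic $x^{-l}$ versus $x^{l+1}$ dichotomy. The paper instead links (iv) back to (i) by writing the resolvent kernel as $\phi(k^2,x)\ti\psi(k,y)/F(k)$ and \emph{defining} a zero-energy resonance or eigenvalue as a singularity of this kernel at $k=0$; the $L^2$/boundedness reading is then recorded as an afterthought. The paper's route has the advantage that it makes precise what ``resonance'' means here (the introduction announces that Lemma~\ref{lem:resonance} provides the equivalent definitions), so (iv) is not really an independently given condition to be verified. Your route is more hands-on but relies on an external notion of resonance; note in particular that your claim ``bounded non-$L^2$ generalized eigenfunction'' is not quite accurate for $l\in(-\tfrac12,0)$, where $x^{-l}\to\infty$ --- the paper is careful to say ``bounded if $l\ge0$'' only. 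This is a wording issue rather than a mathematical gap, but it underscores why the resolvent-singularity characterization is the cleaner anchor for (iv).
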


\begin{proof}
By \eqref{eq:a.F} and \eqref{eq:tipsi}, $F(0) = W(\ti{\psi}(0,.),\phi(0,.))$ which proves the equivalence $(i) \Leftrightarrow (ii)$. Moreover, the latter is further equivalent to $(iii)$ since $\ti{\psi}(0,x) \sim C x^{-l}$ as $x\to \infty$ in view of \eqref{eq:tipsiest}. 

Finally, the kernel of the resolvent $\cRH(k^2)=(H-k^2)^{-1}$ is given by
\be\label{eq:2.47}
[\cRH(k^2 + \I 0)](y,x)
        = \frac{\phi(k^2,x) f(k,y)}{f(k)}
	= \frac{\phi(k^2,x) \ti{\psi}(k,y)}{F(k)},\quad x\le y,
\ee
and hence we see that there is a resonance or an eigenvalue, i.e.\ a singularity of this kernel at $k=0$ if and
only if $F(0)=0$. Moreover,  this is also equivalent to existence of a solution
which, is bounded if $l\ge0$ and which is square integrable (i.e.\ an eigenfunction) if $l>\frac{1}{2}$.
\end{proof}

\begin{lemma}
\label{lem:about-F}
Assume \eqref{q:hyp} and \eqref{eq:q_mar}. Then $F(k)\ne 0$ for 
$k\in\R\setminus \{0\}$ and
\[
\abs{F(k)}^{-1} \le \OO(\abs{k}^{-\min(l+3/2,2)}),\qquad k\to 0.
\]
\end{lemma}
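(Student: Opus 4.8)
\emph{Proof plan.} The statement has two ingredients: (a) $F(k)\neq 0$ for $k\in\R\setminus\{0\}$, and (b) the bound as $k\to0$. For (a), fix real $k\neq 0$ and recall from \eqref{eq:wrfkpm} that $f(k,\cdot)$ and $f(-k,\cdot)=f(k,\cdot)^{*}$ are linearly independent solutions of $\tau y=k^{2}y$. If $F(k)=0$, then $f(k)=W(f(k,\cdot),\phi(k^{2},\cdot))=0$ by \eqref{eq:a.F} (since $k^{l}\neq 0$), so $\phi(k^{2},\cdot)$ would be proportional to $f(k,\cdot)$; being real, it would also be proportional to $f(k,\cdot)^{*}=f(-k,\cdot)$, forcing $\phi(k^{2},\cdot)\equiv 0$, which is impossible. (Equivalently, \eqref{eq:imm=f} shows $k/|f(k)|^{2}=\im m(k^{2})$ is finite.) Thus $F$ is continuous and nonvanishing on $\R\setminus\{0\}$, so if $F(0)\neq 0$ — equivalently, if there is no resonance or eigenvalue at $0$, by Lemma~\ref{lem:resonance} — then $|F(k)|^{-1}$ stays bounded near $0$ and (b) is immediate. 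From now on assume $F(0)=0$.

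The heart of (b) is a self-improving estimate obtained by writing $|F|$ in two ways via the $m$-function. Dividing \eqref{eq:5.7} by $f_{l}(k)$ gives $\ti\psi(k,x)=F(k)\theta(k^{2},x)-\hat g(k^{2})\phi(k^{2},x)$ with $\hat g(k^{2}):=g(k)/f_{l}(k)=W(\ti\psi(k,\cdot),\theta(k^{2},\cdot))$, which is continuous, hence bounded, near $k=0$ because $\ti\psi(\cdot,x)$ extends continuously to $\R$ and $\theta(z,\cdot)$ is entire. Setting $G(z):=F(\sqrt z)$ and using $f(k)=F(k)f_{l}(k)$ in $m(k^{2})=-g(k)/f(k)$ yields $m(z)=-\hat g(z)/G(z)$, whence $|m(\lambda+\I 0)|^{2}=|\hat g(\lambda)|^{2}/|F(\sqrt\lambda)|^{2}$ for $\lambda>0$ (using part (a)). On the other hand, $|f_{l}(k)|=|k|^{-l}$ together with \eqref{eq:imm=f} give $\im m(\lambda+\I 0)=\lambda^{l+1/2}/|F(\sqrt\lambda)|^{2}$. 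Since $\im m(\lambda+\I 0)=\pi\rho'(\lambda)>0$ for $\lambda>0$, dividing the two identities and using $|m|^{2}\ge(\im m)^{2}$ gives
\[
\im m(\lambda+\I 0)\ \le\ \frac{|m(\lambda+\I 0)|^{2}}{\im m(\lambda+\I 0)}\ =\ \frac{|\hat g(\lambda)|^{2}}{\lambda^{l+1/2}}\ \le\ \frac{C}{\lambda^{l+1/2}},\qquad 0<\lambda<\lambda_{0}.
\]
Hence $|F(\sqrt\lambda)|^{2}=\lambda^{l+1/2}/\im m(\lambda+\I 0)\ge C^{-1}\lambda^{2l+1}$, i.e.\ $|F(k)|\ge c\,|k|^{2l+1}$ for small $|k|$. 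Because $2l+1\le l+\tfrac32=\min(l+\tfrac32,2)$ as soon as $l\le\tfrac12$, this already proves $|F(k)|^{-1}=\OO(|k|^{-\min(l+3/2,2)})$ in the range $-\tfrac12<l\le\tfrac12$.

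It remains to handle $l>\tfrac12$, where $F(0)=0$ forces an eigenvalue at $0$ (Lemma~\ref{lem:resonance}) and the bound $\OO(|k|^{-2l-1})$ just obtained is too weak. Here I would argue that near $z=0$ the $m$-function has a simple pole. Indeed, by \eqref{eq:rho} the spectral measure $\rho$ has an atom $\gamma_{0}=\|\phi(0,\cdot)\|_{L^{2}(\R_{+})}^{-2}>0$ at $\lambda=0$, which in the representation \eqref{eq:int_rep} contributes the term $-\gamma_{0}(1+z^{2})^{\kappa_{l}}/z\sim-\gamma_{0}/z$, while the remaining part of $\rho$ is finite near $0$ (since $\int_{0}^{1}d\rho<\infty$ from the weight condition in \eqref{eq:int_rep}) and its Stieltjes transform is $o(1/z)$ as $z\to0$. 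Consequently $m(z)=-\gamma_{0}/z+o(1/z)$, so $|m(\lambda+\I 0)|\sim\gamma_{0}/\lambda$ as $\lambda\downarrow 0$. Since $\ti\psi(0,\cdot)=\mu\,\phi(0,\cdot)$ with $\mu\neq 0$ (Lemma~\ref{lem:resonance}(ii) and \eqref{eq:tipsiest}), we get $\hat g(0)=W(\ti\psi(0,\cdot),\theta(0,\cdot))=-\mu\neq 0$, and therefore $|F(k)|=|\hat g(k^{2})|/|m(k^{2}+\I 0)|$ behaves like $|\mu|\,k^{2}/\gamma_{0}$ as $k\to0$; in particular $|F(k)|^{-1}=\OO(k^{-2})=\OO(k^{-\min(l+3/2,2)})$, which completes the proof. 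The one genuinely technical point is this last step — verifying that near $z=0$ the atom of $\rho$ dominates, i.e.\ that the absolutely continuous part contributes only an $o(1/z)$ correction to $m$ — which is exactly where local finiteness of the spectral measure is used; parts (a), the reduction to $F(0)=0$, and the range $l\le\tfrac12$ are essentially formal once \eqref{eq:imm=f}, \eqref{eq:a.F}, \eqref{eq:5.7} and the continuity of $\ti\psi$ at $k=0$ are in hand.
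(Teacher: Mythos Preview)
Your treatment of (a) and of the non-resonant case $F(0)\neq0$ is fine and matches the paper. For the range $-\tfrac12<l\le\tfrac12$ your argument is actually a \emph{different} and rather clean route: instead of invoking the integral representation \eqref{eq:int_rep} to bound $|m|$ (as the paper does), you use only the elementary identities $\im m=\lambda^{l+1/2}/|F|^{2}$ and $|m|^{2}=|\hat g|^{2}/|F|^{2}$ together with $(\im m)^{2}\le|m|^{2}$ and the continuity (hence boundedness) of $\hat g=W(\ti\psi,\theta)$ near $k=0$. This yields $|F(k)|^{-1}\le C|k|^{-(2l+1)}$, which for $l\le\tfrac12$ is even \emph{stronger} than the paper's $|F(k)|^{-1}\le C|k|^{-(l+3/2)}$. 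Nice.

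The gap is in the case $l>\tfrac12$. You need $|m(\lambda+\I0)|\le C/\lambda$ as $\lambda\downarrow0$ and try to get it from $m(z)=-\gamma_{0}/z+o(1/z)$. But the $o(1/z)$ you can extract from local finiteness of $\rho$ is only a \emph{non-tangential} statement (dominated convergence in the Stieltjes integral as $z\to0$ in $\C_{+}$). What you actually use is the \emph{boundary value} along $(0,\infty)$, and there the AC part of $\rho$ contributes $\im m(\lambda+\I0)=\pi\rho'(\lambda)=\lambda^{l+1/2}/|F(\sqrt\lambda)|^{2}$; bounding this by $C/\lambda$ is exactly the estimate $|F(k)|\ge c\,k^{(l+3/2)/1}$\dots\ i.e.\ it presupposes control of $|F|$. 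Your a~priori bound $|F(k)|\ge c|k|^{2l+1}$ only gives $\rho'(\lambda)\le C\lambda^{-l-1/2}$, which for $l>\tfrac12$ is \emph{not} $o(1/\lambda)$, so the bootstrap does not close. Local finiteness of $\rho$ alone (i.e.\ $\int_{0}^{1}\rho'<\infty$) merely forces the vanishing order of $F$ at $0$ to be $<l+\tfrac32$, not $\le2$.

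The paper avoids this circularity by switching to a different Herglotz--Nevanlinna object: the diagonal of the Green's function $[\cRH(z)](x,x)=\phi(z,x)\ti\psi(\sqrt z,x)/F(\sqrt z)$, with $x$ chosen so that the numerator is nonzero at $z=0$, and then reads off $|F(k)|^{-1}\le C k^{-2}$ from the Herglotz bound $|[\cRH(z)](x,x)|\le C/|z|$. Whether you find that step more transparent or not, it is structurally different from your attempt to control $m(\lambda+\I0)$ via the spectral decomposition; your version, as written, is circular at precisely the point you flag as ``genuinely technical.''
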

\begin{proof}
Since $f(k,x)$ can only be a multiple of $\phi(k^2,x)$ if $k=0$, their
Wronskian $f(k)$ can only vanish at $0$. Moreover, the singular Weyl
function must satisfy
\[
\abs{m(z)} \le \frac{C_\lam}{\abs{z-\lam}}
\]
near every $\lam\in\R$, which follows from its integral
representation \eqref{eq:int_rep}. Hence we obtain from  \eqref{eq:a.F} and \eqref{eq:imm=f}
\[
\frac{1}{\abs{F(k)}} =  \frac{1}{|k|^l \abs{f(k)}} = \frac{1}{|k|^{l+\frac12}}\sqrt{\im\, m(k^2)}  \le \frac{C}{\abs{k}^{l+\frac32}}
\]
as claimed.

To obtain the second bound we use that fact that the diagonal of the Green's function $\cRH(\cdot)(x,x)$ is a Herglotz--Nevanlinna function
and hence satisfies 
\[
\big|\cRH(z)(x,x)\big| \le \frac{C_{\lambda,x}}{|z - \lambda|}
\]
near every $\lambda\in\R$.
Choosing $x>0$ such that $\phi(0,x)\ti{\psi}(0,x)\neq 0$ and using \eqref{eq:2.47} we prove the claim.
\end{proof}

\begin{lemma}\label{lem:Fp}
Assume \eqref{q:hyp} and \eqref{eq:q_mar}. Then $F$ is differentiable for all $k\neq 0$ and 
\[
\abs{F'(k)}\le \frac{C}{1+\abs{k}}
\]
for all $|k|$ large enough. 
If in addition 
\be\label{eq:secmoment}
\int_1^\infty x^2 |q(x)|dx<\infty.
\ee
 then
\[
\abs{F'(k)} = \OO(|k|^{\min(0,2l)} ),\quad |k|\to 0.
\]
\end{lemma}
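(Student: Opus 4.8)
The plan is to obtain $F'(k)$ by differentiating one of the two integral representations for $F(k)$ from Lemma~\ref{lem:b.5} and then inserting the pointwise estimates for the ingredients that were assembled in Section~2. Concretely, from $F(k)=1+\int_0^\infty \psi_l(k^2,x)\phi(k^2,x)q(x)dx$ we get, by the product rule,
\[
F'(k)=\int_0^\infty \big[\partial_k\psi_l(k^2,x)\,\phi(k^2,x)+\psi_l(k^2,x)\,\partial_k\phi(k^2,x)\big]q(x)\,dx,
\]
the differentiation under the integral being justified by the uniform-in-$k$ (on compact subsets away from $0$) bounds below together with dominated convergence. To control the first summand I would use \eqref{est:psi_lA} for $\partial_k\psi_l$ is replaced by \eqref{eq:partial-z-weyl-sol-free} (its $\partial_k$ bound) and \eqref{estphi}/\eqref{estphil} for $\phi(k^2,x)$ (note $|\phi|\le|\phi_l|+|\phi-\phi_l|\le C(x/(1+|k|x))^{l+1}\E^{|\im k|x}(1+\int_0^x \frac{y|q(y)|}{1+|k|y}dy)$); for the second summand use \eqref{est:psi_lA} for $\psi_l$ and \eqref{eq:partial-z-phil}/\eqref{eq:partial-z-diff-phi} for $\partial_k\phi$.

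For the large-$|k|$ bound, on the real axis $\im k=0$, the exponential factors cancel in each product ($\E^{-|\im k|x}\E^{|\im k|x}=1$), so the first product is bounded by $C|k|x\big(\tfrac{1+|k|x}{x}\big)^{l-1}\cdot\big(\tfrac{x}{1+|k|x}\big)^{l+1}=C|k|x\cdot\tfrac{x^2}{(1+|k|x)^2}=C\cdot\tfrac{|k|x^3}{(1+|k|x)^2}\le C x/|k|$ (using $|k|x\le 1+|k|x$ twice), and similarly the second product is bounded by $C\big(\tfrac{x}{1+|k|x}\big)^{-l}\cdot|k|x\big(\tfrac{x}{1+|k|x}\big)^{l+2}=C|k|x\cdot\tfrac{x^2}{(1+|k|x)^2}\le Cx/|k|$, in the regime $l\ge 1/2$ (the $|l|<1/2$ case of \eqref{eq:partial-z-weyl-sol-free} gives an even better power of $|k|x/(1+|k|x)$, hence the same conclusion). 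Integrating against $|q(x)|$ and using $\int_0^\infty x|q(x)|dx<\infty$ (which follows from \eqref{q:hyp} and \eqref{eq:q_mar}) yields $|F'(k)|\le C/|k|$, which is $\le C/(1+|k|)$ for $|k|$ large. One should also incorporate the $\int_0^x \frac{y|q(y)|}{1+|k|y}dy\le\int_0^\infty y|q(y)|dy<\infty$ factors coming from the ``$\phi-\phi_l$'' and ``$\partial_k\phi-\partial_k\phi_l$'' corrections; these only contribute an extra bounded constant.

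For the small-$|k|$ asymptotics under the extra hypothesis \eqref{eq:secmoment}, the same two products (now with $\E^{|\im k|x}$ replaced by $1$ since we work on the real axis; for $x$ large and $|k|$ small the relevant regime is $|k|x\le 1$ as well as $|k|x\ge 1$) are bounded by $C|k|x^3$ when $|k|x\le 1$ and by $C x/|k|\cdot(|k|x)^{-2l}$-type terms when $|k|x\ge 1$; splitting the integral at $x=1/|k|$, the first piece gives $C|k|\int_0^{1/|k|}x^3|q(x)|dx$ and the second gives a contribution controlled by $|k|^{2l}\int_{1/|k|}^\infty\cdots$, and after collecting powers one arrives at $O(|k|^{\min(0,2l)})$; the hypothesis $\int_1^\infty x^2|q(x)|dx<\infty$ is exactly what makes the $|k|x\le 1$ portion behave like $O(1)$ rather than diverge, while for $l<0$ the negative power $|k|^{2l}$ dominates. \textbf{The main obstacle} I anticipate is the bookkeeping in this small-$k$ split: one must carefully track which of the two bounds in \eqref{eq:partial-z-weyl-sol-free} applies (the $l\ge 1/2$ versus $|l|<1/2$ dichotomy), combine it correctly with the two different regimes $|k|x\lessgtr 1$ in \eqref{estphil}, \eqref{eq:partial-z-phil}, and verify that the worst term produced is precisely of order $|k|^{\min(0,2l)}$ and no worse — in particular checking that the ``correction'' integrals $\int_0^x\frac{y|q(y)|}{1+|k|y}dy$, which are only bounded (not small) as $|k|\to 0$, do not spoil the claimed rate. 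This is routine but must be done case by case in $l$, and is the only genuinely delicate point.
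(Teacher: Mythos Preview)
Your approach is correct and coincides with the paper's: differentiate the representation $F(k)=1+\int_0^\infty \psi_l(k^2,x)\phi(k^2,x)q(x)\,dx$ and bound the two resulting summands using \eqref{est:psi_lA}, \eqref{eq:partial-z-weyl-sol-free}, \eqref{estphi}, and \eqref{eq:partial-z-phil}/\eqref{eq:partial-z-diff-phi}.

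The only difference is in the bookkeeping for small $|k|$, where your splitting at $x=1/|k|$ is unnecessarily complicated (and your expression ``$Cx/|k|\cdot(|k|x)^{-2l}$-type terms'' for the $|k|x\ge1$ piece is not quite right). The paper avoids this entirely by observing that the pointwise products are bounded, for \emph{all} $k$ and $x$, by
\[
|\psi_l\,\partial_k\phi|\le C\,\frac{x^2}{1+|k|x},\qquad
|\partial_k\psi_l\,\phi|\le C
\begin{cases}
\dfrac{x^2}{1+|k|x}, & l\ge 0,\\[2mm]
|k|^{2l}\,\dfrac{x^{2+2l}}{(1+|k|x)^{1+2l}}, & l\in(-\tfrac12,0).
\end{cases}
\]
From a single integrated bound of the form $\int_0^\infty \frac{x^2}{1+|k|x}|q(x)|\,dx$ (respectively its $|k|^{2l}$ variant) both regimes drop out at once: bounding $1+|k|x\ge |k|x$ gives $\le |k|^{-1}\int x|q|$ for large $|k|$, while bounding $1+|k|x\ge 1$ gives $\le \int x^2|q|$ (respectively $|k|^{2l}\int x^{2+2l}|q|$) for small $|k|$ under \eqref{eq:secmoment}. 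This removes the case-by-case analysis you flagged as the main obstacle.
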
 
\begin{proof}
Using \eqref{eq:intr_F}, we get
\[
F'(k) 
= \int_0^\infty \big(\partial_k\psi_l(k^2,x)\phi(k^2,x) 
  + \psi_l(k^2,x)\partial_k\phi(k^2,x) \big)q(x)dx .  
\]
The integral converges absolutely for all $k\neq 0$. Indeed, by \eqref{est:psi_lA} and \eqref{eq:partial-z-diff-phi}, we obtain
\begin{align*}
\left|\int_0^\infty \psi_l(k^2,x)\partial_k\phi(k^2,x) q(x)dx \right| \le C  \int_0^\infty \frac{x^2}{1+|k|x}|q(x)|dx
\end{align*}
Using \eqref{estphi} and \eqref{eq:partial-z-weyl-sol-free}, 
we get the following estimates for the first summand:
\begin{align*}
\abs{\int_0^\infty\!\!\!\partial_k\psi_l(k^2,x)\,\phi(k^2,x) q(x)dx} 
	\le C \begin{dcases}
          \int_0^\infty\!\!\! 
          \frac{x^2}{1+|k|x}|q(x)|dx, & l\ge 0,
          \\
          |k|^{2l}\int_0^\infty\!\!\! 
          \frac{x^{2+2l}}{(1+|k|x)^{1+2l}}|q(x)|dx, 
          	& l\in (-\tfrac{1}{2},0).
          \end{dcases}
\end{align*}
Now the claim follows.
\end{proof}

\section{Dispersive decay}
\label{ll-sec}

In this section we prove the dispersive decay estimate \eqref{full} for 
the Schr\"odinger equation \eqref{Schr}. In order to do this, we divide
the analysis into a low and high energy regimes. In the analysis of both regimes we make use of variants of the 
van der Corput lemma (see Appendix \ref{sec:vdCorput}), combined with a Born series approach
for the high energy regime. 

\subsection{The low energy part}
\label{ll-sec-low}

For the low energy
regime, it is convenient to use the following well-known
representation of the integral kernel of $\E^{-\I tH}P_{c}(H)$,
\begin{align}
[\E^{-\I tH}P_{c}(H)](x,y)
	&= \frac{2}{\pi} \int_{-\infty}^{\infty}
       \E^{-\I t k^2}\,\phi(k^2,x) \phi(k^2,y) \im m(k^2) k \,dk\nn
       \\
	&=\frac{2}{\pi} \int_{-\infty}^{\infty}
	  \E^{-\I t k^2}\,\frac{\phi(k^2,x) \phi(k^2,y)k^2}{|f(k)|^2}\,dk\nn
	  \\
	&=\frac{2}{\pi} \int_{-\infty}^{\infty}
	  \E^{-\I t k^2}\,\frac{\phi(k^2,x)\phi(k^2,y)|k|^{2(l+1)}}{|F(k)|^2} 
	  \,dk\label{integr}
	  \\
	&=\frac{2}{\pi} \int_{-\infty}^{\infty}
	  \E^{-\I t k^2}\,\frac{\tilde{\phi}(k,x)\tilde{\phi}(k,y)}
	  {|F(k)|^2} \,dk,\nn
\end{align}
where the integral is to be understood as an improper integral. In fact, adding an additional
energy cut-off (which is all we will need below) the formula is immediate from the spectral
transformation \cite[\S3]{kst2} and the general case can then be established taking limits
using the bounds on this kernel to be established below.
In the last equality we have used  
\begin{equation}\label{eq:tiphif}
\tilde{\phi}(k,x)
	:= |k|^{l+1} \phi(k^2,x),\quad k\in \R. 
\end{equation}
Note that 
\begin{align}\label{eq:esttiphi}
|\ti{\phi}(k,x)| &\le C \left(\frac{|k|x}{1+|k|x}\right)^{l+1}\E^{|\im k|x},\\
\label{eq:esttiphik}
|\partial_k \ti{\phi}(k,x)| &\le Cx \left(\frac{|k|x}{1+|k|x}\right)^{l}\E^{|\im k|x},
\end{align}
which follows  from \eqref{estphil}, \eqref{estphi} and
\[
\partial_k \ti{\phi}(k,x) = (l+1){\rm sgn}(k)|k|^l \phi(k^2,x) + |k|^{l+1}\partial_k \phi(k^2,x)
\]
together with \eqref{eq:partial-z-phil}, \eqref{eq:partial-z-diff-phi}.

We begin with the following estimate.

\begin{theorem}\label{thm:le1}
Assume \eqref{q:hyp} and \eqref{eq:secmoment}. Let also $\chi\in C^\infty_c(\R)$ with ${\rm supp}(\chi)\subset (-k_0,k_0)$
and suppose there is neither a resonance nor an eigenvalue at $0$, that is $F(0)\ne 0$. Then
\be\label{eq:low-energy-1}
\big|[\E^{-\I tH}\chi(H)P_{c}(H) ](x,y)\big| \le \frac{C}{|t|^{1/2}} \max(x,y).
\ee
\end{theorem}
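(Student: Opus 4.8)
The plan is to apply the van der Corput lemma to the oscillatory integral
\[
[\E^{-\I tH}\chi(H)P_c(H)](x,y) = \frac{2}{\pi}\int_{-\infty}^\infty \E^{-\I tk^2}\,\frac{\ti\phi(k,x)\ti\phi(k,y)}{|F(k)|^2}\,\chi(k^2)\,dk,
\]
where I have inserted the cutoff $\chi(k^2)$ into the last representation in \eqref{integr}. Since the phase $k\mapsto k^2$ has second derivative $2$, the relevant version of van der Corput (see Appendix~\ref{sec:vdCorput}) gives a bound of the form $|t|^{-1/2}$ times the total variation (or $L^\infty$ norm plus $L^1$ norm of the derivative) of the amplitude
\[
A(k) := \frac{\ti\phi(k,x)\ti\phi(k,y)}{|F(k)|^2}\chi(k^2).
\]
So the whole proof reduces to showing that $\|A\|_\infty + \|A'\|_{L^1}$ is bounded by $C\max(x,y)$, uniformly for $k$ in the compact support of $\chi$.

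First I would record the low-energy facts we need. Because $F(0)\neq 0$ and $F$ is continuous on $\im k\ge 0$ with $F(k)\ne 0$ for real $k\ne 0$ (Lemma~\ref{lem:about-F}), we have $|F(k)|\ge c>0$ on $\supp(\chi)$, so $|F(k)|^{-2}$ is bounded there; moreover $F$ is differentiable for $k\ne 0$ with $|F'(k)| = \OO(|k|^{\min(0,2l)})$ as $k\to 0$ by Lemma~\ref{lem:Fp} (here \eqref{eq:secmoment} is used). For the regular solution, \eqref{eq:esttiphi} gives $|\ti\phi(k,x)|\le C(\tfrac{|k|x}{1+|k|x})^{l+1}\le C\min(1,(|k|x)^{l+1})$ and \eqref{eq:esttiphik} gives $|\partial_k\ti\phi(k,x)|\le Cx(\tfrac{|k|x}{1+|k|x})^{l}\le Cx\min(1,(|k|x)^l)$ (with the obvious modification $|\partial_k\ti\phi|\le Cx$ when $l\ge 0$). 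The essential gain is that the product $\ti\phi(k,x)\ti\phi(k,y)$ is bounded by $C\min(1,(|k|x)^{l+1})\min(1,(|k|y)^{l+1})$, which in particular is $\le C(|k|\min(x,y))^{l+1}$ and also $\le C$.

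Next I would estimate $\|A\|_\infty$ and $\|A'\|_{L^1}$. The sup bound is immediate: $|A(k)|\le C|\ti\phi(k,x)\ti\phi(k,y)|\le C$ on $\supp\chi$, which is certainly $\le C\max(x,y)$ once we note $\max(x,y)$ is bounded below away from $0$ only if... — more carefully, we should keep $|A(k)|\le C\le C\max(x,y)$ only when $\max(x,y)\ge 1$; for $\max(x,y)\le 1$ we instead use $|\ti\phi(k,x)\ti\phi(k,y)|\le C(|k|x)^{l+1}(|k|y)^{l+1}\le C|k|^{2l+2}x\,y\le C'\max(x,y)$ since $|k|\le k_0$, $l>-1/2$ and $xy\le\max(x,y)$. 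For the derivative, expand
\[
A'(k) = \frac{(\partial_k\ti\phi(k,x))\ti\phi(k,y) + \ti\phi(k,x)\partial_k\ti\phi(k,y)}{|F(k)|^2}\chi(k^2) + \ti\phi(k,x)\ti\phi(k,y)\,\partial_k\!\Big(\frac{\chi(k^2)}{|F(k)|^2}\Big).
\]
In the first group, a typical term is bounded by $Cx\min(1,(|k|x)^l)\cdot\min(1,(|k|y)^{l+1})$; integrating in $k$ over $\supp\chi$ the potentially singular factor $|k|^l$ (when $l<0$) is locally integrable since $l>-1/2$, and one checks the $k$-integral is $\le C\max(x,y)$ by the same case split as above (when $\max(x,y)\le 1$ use the extra smallness $(|k|x)^l(|k|y)^{l+1}\le C x^l y^{l+1}|k|^{2l+1}$ and $x^l y^{l+1}\le \max(x,y)^{2l+1}\le C$ after absorbing into $x\cdot(\dots)$). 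In the last group, $\partial_k(\chi(k^2)|F(k)|^{-2})$ is controlled by $C + C|F'(k)|/|F(k)|^3 = C + \OO(|k|^{\min(0,2l)})$, which is again locally integrable on $\supp\chi$ because $2l>-1$; multiplying by $|\ti\phi(k,x)\ti\phi(k,y)|\le C\min(1,(|k|\min(x,y))^{l+1})$ and integrating gives a bound $\le C\max(x,y)$ by the same dichotomy. Collecting, $\|A\|_\infty+\|A'\|_{L^1}\le C\max(x,y)$, and van der Corput yields \eqref{eq:low-energy-1}.

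**Main obstacle.** The delicate point is the bookkeeping near $k=0$: both $|k|^l$ (from $\partial_k\ti\phi$ when $-1/2<l<0$) and $|k|^{2l}$ (from $F'$) are singular there, and one must simultaneously exploit the vanishing $\ti\phi(k,x)=\OO((|k|x)^{l+1})$ to see that every resulting $k$-integral is finite and, crucially, carries exactly one power of $\max(x,y)$ (not $x^{l+1}$ or $xy$, which would be the naive outcome). The trick is that in the regime $\max(x,y)\le 1$ one trades all the extra powers of $|k|x$ and $|k|y$ against the boundedness of $k$ on $\supp\chi$, turning $x^{a}y^{b}$ with $a+b\ge 1$ into $\le\max(x,y)$; in the regime $\max(x,y)\ge1$ one simply uses the trivial $L^\infty$ bounds. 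Making this dichotomy uniform across all the terms — and checking that the exponents always add up to at least $1$ — is the real content; everything else is a routine application of van der Corput plus the estimates already established in Section~\ref{wa-sec}.
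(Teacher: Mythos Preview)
Your overall approach is the same as the paper's: apply the van der Corput lemma (Lemma~\ref{lem:vC}) to the amplitude $A(k)=\chi(k^2)\,\ti\phi(k,x)\ti\phi(k,y)/|F(k)|^2$ and bound $\|A\|_\infty+\|A'\|_1$ by $C\max(x,y)$ using \eqref{eq:esttiphi}, \eqref{eq:esttiphik} together with Lemmas~\ref{lem:about-F} and~\ref{lem:Fp}. The ingredients and the logic are right.

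The difference is in how the factor $\max(x,y)$ is extracted. Your case split $\max(x,y)\lessgtr 1$ can be made to work, but the writeup has genuine slips in the range $-\tfrac12<l<0$: the inequality $(\tfrac{|k|x}{1+|k|x})^l\le\min(1,(|k|x)^l)$ is backwards for $l<0$ (the left side blows up like $(|k|x)^l$ as $|k|x\to0$, hence is $\ge1$ there, not $\le1$); the step $(|k|x)^{l+1}(|k|y)^{l+1}\le C|k|^{2l+2}xy$ amounts to $(xy)^l\le C$, which fails for small $xy$ when $l<0$; and in the last group your bound $|\ti\phi(k,x)\ti\phi(k,y)|\le C\min(1,(|k|\min(x,y))^{l+1})$ throws away one factor and leaves you with $\min(x,y)^{l+1}$ after integration, which is \emph{not} $\le C\max(x,y)$ for $l<0$. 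All of these are reparable (keep both $\ti\phi$ factors and use $(xy)^{l+1}\le\max(x,y)^{2(l+1)}\le\max(x,y)$ when $\max(x,y)\le1$, since $2(l+1)>1$), but the bookkeeping you flag as the ``main obstacle'' is real and your sketch does not yet carry it out.

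The paper sidesteps the case analysis with a single pointwise manipulation. Writing $x\big(\tfrac{|k|x}{1+|k|x}\big)^l=\big(\tfrac{|k|x}{1+|k|x}\big)^{l+1}\tfrac{1+|k|x}{|k|}$ and rebalancing a half-power between the $x$- and $y$-factors gives
\[
\big|\partial_k\ti\phi(k,x)\,\ti\phi(k,y)+\ti\phi(k,x)\,\partial_k\ti\phi(k,y)\big|
\le C\sqrt{xy}\Big(\sqrt{\tfrac{1+|k|x}{1+|k|y}}+\sqrt{\tfrac{1+|k|y}{1+|k|x}}\Big)\le C\max(x,y),
\]
uniformly in $k$, with no $l<0$ subtleties and no dichotomy. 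That pointwise bound is the one idea worth importing; once you have it, the proof is two lines.
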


\begin{proof}
We want to apply the van der Corput Lemma \ref{lem:vC} with $c=0$ and
\[
A(k) = \chi(k^2) A_0(k), \qquad A_0(k) = \frac{\phi(k^2,x) \phi(k^2,y)\abs{k}^{2(l+1)}}{\abs{F(k)}^2}=\frac{\tilde{\phi}(k,x)\tilde{\phi}(k,y)}{|F(k)|^2}.
\]
Note that 
\[
\|A\|_\infty \le \|\chi\|_\infty\|A_0\|_\infty,\qquad \|A'\|_1 \le \|\chi'\|_1\|A_0\|_\infty + \|\chi\|_\infty\|A_0'\|_1.
\]

Our assumption $F(0)\ne 0$ together with Lemma \ref{lem:about-F} imply  $F(k)\ne 0$ for all $k\in\R$ and hence $\|1/F\|_\infty<\infty$ in view of Lemma~\ref{lem:b.5}.
Using \eqref{eq:esttiphi} we infer
\be\label{eq:estAinfty}
\sup_{k^2\le k_0} |A_0(k)| \le C\,\|1/F\|^2_\infty \big(\min(1, k_0xy)\big)^{l+1}<\infty,
\ee
which holds for all $x$ and $y$ with some uniform constant $C>0$. 
Moreover,
\[
A_0'(k) = \frac{\partial_k\tilde{\phi}(k,x)\tilde{\phi}(k,y) + \tilde{\phi}(k,x)\partial_k\tilde{\phi}(k,y)}
		{\abs{F(k)}^2}
		-
		 A_0(k) \re\frac{F'(k)}{F(k)}
\]
and it suffices to bound the two terms from above on compact sets.
In fact, it suffices to consider the first term since the second one follows
from~ \eqref{eq:estAinfty} and Lemma~\ref{lem:Fp}.

The estimate for the first term follows from \eqref{eq:esttiphi} and \eqref{eq:esttiphik} since 
\begin{align*}
&\left|\partial_k\tilde{\phi}(k,x)\tilde{\phi}(k,y) + \tilde{\phi}(k,x)\partial_k\tilde{\phi}(k,y)\right| \\
&\quad\quad \le C \left(\frac{|k|x}{1+|k|x}\right)^{l+1} \left(\frac{|k|y}{1+|k|y}\right)^{l+1} \left( \frac{1+|k|x}{|k|} + \frac{1+|k|y}{|k|}\right)\\
&\quad\quad \le C \sqrt{xy} \left(\frac{|k|x}{1+|k|x}\right)^{l+1/2} \left(\frac{|k|y}{1+|k|y}\right)^{l+1/2} \frac{1+|k|x + 1+ |k|y}{\sqrt{(1+|k|x)(1+|k|y)}}\\
& \quad\quad \le C \sqrt{xy} \left(\sqrt{\frac{1+|k|x}{1+|k|y}} + \sqrt{\frac{1+|k|y}{1+|k|x}}\right) \le C\max(x,y).
\end{align*}
It remains to apply the van der Corput lemma. 
\end{proof}

To get rid of the dependence of $x$ and $y$ in \eqref{eq:low-energy-1} we make
use of the transformation operators \eqref{eq:to_GL} and \eqref{eq:to_Mar}.

\begin{theorem}\label{thm:le2}
Assume
\be
\int_0^1 |q(x)| dx <\infty \quad \text{and} \quad  \int_1^\infty x^{\max(2,l+1)} |q(x)| dx <\infty.
\ee
Let also $\chi\in C^\infty_c(\R)$ with ${\rm supp}(\chi)\subset (-k_0,k_0)$ and
suppose there is neither a resonance nor an eigenvalue at $0$, that is $F(0)\ne 0$.
Then
\be\label{eq:3.11}
\big|[\E^{-\I tH}\chi(H)P_{c}(H) ](x,y)\big| \le \frac{C}{|t|^{1/2}}, \qquad \max(x,y)\ge 1.
\ee
\end{theorem}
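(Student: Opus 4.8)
The plan is to reduce the estimate \eqref{eq:3.11} for $\max(x,y) \ge 1$ to the already-established bound \eqref{eq:low-energy-1} of Theorem~\ref{thm:le1} by stripping off the weight $\max(x,y)$ using the transformation operators. Without loss of generality assume $y \le x$, so $\max(x,y) = x \ge 1$. The key observation is that the integral kernel can be written, via the representation \eqref{integr}, as
\[
[\E^{-\I tH}\chi(H)P_c(H)](x,y) = \frac{2}{\pi}\int_{-\infty}^\infty \E^{-\I tk^2}\chi(k^2)\frac{\tilde\phi(k,x)\tilde\phi(k,y)}{|F(k)|^2}\,dk,
\]
and that $\tilde\phi(k,x) = |k|^{l+1}\phi(k^2,x)$, while by Lemma~\ref{lem:toGL} (resp.\ Lemma~\ref{lem:to}) the regular solution $\phi$ is obtained from $\phi_l$ through the Gelfand--Levitan kernel $B$, and the Jost solution $f$ (hence $\tilde\psi$, hence also the combination appearing in $\phi$ via \eqref{eq:phif}) is obtained from $f_l$ through the Marchenko kernel $K$. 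So the strategy is: apply these transformation operators to peel off the factor responsible for the linear growth in $x$, express the kernel as $(I + \mathcal{T})$ acting on a kernel that already satisfies a $\mathcal{O}(|t|^{-1/2})$ bound, and use Corollaries~\ref{cor:Best} and \ref{cor:Kest} (boundedness of $B$ on $L^\infty((0,1))$ and of $K$ on $L^\infty((1,\infty))$) to conclude.

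More concretely, first I would split the kernel according to whether the spatial variable is below or above $1$. For $y \le 1 \le x$, write $\phi(k^2,y) = (I + B)\phi_l(k^2,y)$ acting in the $y$-variable; for the $x$-variable, since $x \ge 1$, use the Jost-side representation, writing $\phi(k^2,x)$ in terms of the Jost solutions via \eqref{eq:phif} and then $f(k,x) = (I+K)f_l(k,x)$ acting in the $x$-variable via Lemma~\ref{lem:to}. The point is that Theorem~\ref{thm:le1} already gives the bound with the factor $\max(x,y)$; I want to use that bound applied to the \emph{unperturbed} building blocks $\phi_l$ and $f_l$ (or rather to the kernel $[\E^{-\I tH}\chi(H)P_c(H)](x,y)$ itself) and then observe that convolving against $B$ in $y \in (0,1)$ keeps things bounded by Corollary~\ref{cor:Best} (the weight $y \le 1$ is harmless), while convolving against $K$ in $x \in (1,\infty)$ removes the dangerous $x$-growth: indeed Corollary~\ref{cor:Kest} shows $K$ is bounded on $L^\infty((1,\infty))$, so if the ``inner'' kernel (with $x$ replaced by the dummy integration variable) is bounded by $C|t|^{-1/2}\cdot(\text{dummy variable})$, one needs the kernel estimate \eqref{eq:MAest}, which carries an extra $\tilde\sigma_l((x+y)/2)$ decay, to absorb that linear growth under the moment hypothesis $\int_1^\infty x^{\max(2,l+1)}|q|\,dx < \infty$.

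The cleanest way to organize this: define $\mathcal{K}_t(x,y) := [\E^{-\I tH}\chi(H)P_c(H)](x,y)$ and, using \eqref{eq:phif} together with \eqref{eq:5.7}, rewrite $\mathcal{K}_t$ entirely in terms of $\phi(k^2,y)$ (for the smaller variable) and $\tilde\psi(k,x)$ (for the larger variable $x \ge 1$); this is legitimate because for $\max(x,y) = x \ge 1$ the Green's-function-type structure \eqref{eq:2.47} applies. Then substitute $\tilde\psi(k,x) = (I+K)\psi_l(k,\cdot)(x)$ using \eqref{eq:tipsi} and Lemma~\ref{lem:to}, and $\phi(k^2,y) = (I+B)\phi_l(k^2,\cdot)(y)$ using Lemma~\ref{lem:toGL}. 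Pulling the $B$- and $K$-integrations outside the $dk$-integral, one gets $\mathcal{K}_t(x,y)$ as a double integral of $B(y,y')K(x,x')$ against a kernel of the form $\frac{2}{\pi}\int \E^{-\I tk^2}\chi(k^2)\psi_l(k,x')\phi_l(k^2,y')\cdot(\dots)\,dk$; but this last kernel is precisely controlled by the van der Corput argument of Theorem~\ref{thm:le1} (with $\phi$ replaced by $\phi_l$ and $\tilde\psi$ in place of the other $\phi$, whose bounds are the same leading order), giving $\le C|t|^{-1/2}\max(x',y')$. Then $\int_1^\infty |K(x,x')|\max(x',y')\,dx' \le C$ uniformly in $x \ge 1$ by the sharper kernel estimate \eqref{eq:MAest} and the moment condition, and $\int_0^1 |B(y,y')|\,dy' \le C$ by Corollary~\ref{cor:Best}; when instead $y \ge 1$ as well, only the $K$-transformation is needed in both variables.

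The main obstacle I anticipate is bookkeeping the mixed representation: the van der Corput argument in Theorem~\ref{thm:le1} is phrased in terms of $\tilde\phi(k,x)\tilde\phi(k,y)$, but once $x \ge 1$ one genuinely wants the Jost-solution representation of one factor (and, near $k = 0$, one must be careful that $\phi(k^2,x)$ expressed via \eqref{eq:phif} involves $f(\pm k)/(2\I k)$, which is where $F(0) \ne 0$ is used to keep $1/F$ bounded — this is already handled in Lemma~\ref{lem:b.5} and Lemma~\ref{lem:about-F}). One must verify that the amplitude $A(k)$ in the resulting oscillatory integral, now built from $\psi_l(k,x')$ and $\phi_l(k^2,y')$ and $1/|F(k)|^2$, still has bounded $L^\infty$ norm and bounded-variation derivative uniformly in the parameters $x', y'$ after the weight is extracted — the relevant decay of $\psi_l$ and $\partial_k\psi_l$ is exactly \eqref{est:psi_lA} and \eqref{eq:partial-z-weyl-sol-free}, and the Jost-side analog of \eqref{eq:esttiphik} must be assembled from \eqref{est:hlk}, \eqref{est:hk}. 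The rest is the routine application of Fubini to interchange the $dk$ integral with the $B$- and $K$-integrations (justified by the absolute convergence coming from $\chi$ having compact support) and the two elementary integral bounds for $B$ and $K$ noted above.
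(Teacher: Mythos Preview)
Your overall strategy --- write $\phi$ via the Gelfand--Levitan and Marchenko transformation operators, pull $B$ and $K$ outside the $k$-integral, and reduce to a ``free'' inner oscillatory integral --- is exactly the paper's route. The gap is in how you control that inner integral.

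You propose to bound the inner kernel by re-running the argument of Theorem~\ref{thm:le1}, which yields $C|t|^{-1/2}\max(x',y')$, and then to absorb the factor $\max(x',y')$ using the decay of the Marchenko kernel $K(x,x')$. This cannot work: the transformation operator is $I+K$, not $K$ alone, so the identity piece hands you back the inner kernel evaluated at $x'=x$, which is $\lesssim |t|^{-1/2}x$ and is \emph{not} uniformly bounded for $x\ge 1$. (Even the $K$-piece would need $\int_1^\infty t^{l+2}|q(t)|\,dt<\infty$, which is strictly stronger than the hypothesis $\int_1^\infty t^{\max(2,l+1)}|q(t)|\,dt<\infty$ for every $l>0$.) Your fallback idea --- checking directly that the amplitude built from $\psi_l(k,x')$ and $\phi_l(k^2,y')$ has $\|A\|_\infty+\|A'\|_1$ bounded uniformly in $x',y'$ --- fails for the same reason Theorem~\ref{thm:le1} picks up a $\max(x,y)$: differentiating $f_l(k,x')=\E^{\I kx'}h_l(k,x')$ in $k$ produces a term $\I x'\,\E^{\I kx'}h_l(k,x')$, so $\|A'\|_1$ grows like $x'$.

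The idea you are missing is that the $W^{1,1}$ norm used in Lemma~\ref{lem:vC} is the wrong one; it is not scale-invariant. The paper instead uses Lemma~\ref{lem:vC2}: one shows that $\tilde\phi_l(k,x)=J(|k|x)$ (after a harmless modification for $k<0$) is the Fourier transform of a finite measure, and by scaling the total variation of that measure is \emph{independent of $x$}. Since $\chi(k^2)|F(k)|^{-2}$ is likewise in the Wiener algebra (by Lemma~\ref{lem:Fp}), the product $\chi(k^2)\tilde\phi_l(k,x')\tilde\phi_l(k,y')|F(k)|^{-2}$ is the Fourier transform of a measure with total variation bounded uniformly in $x',y'$, and Lemma~\ref{lem:vC2} gives $|\tilde I(t,x',y')|\le C|t|^{-1/2}$ with no weight at all. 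Only then does the boundedness of $B$ and $K$ on $L^\infty$ (Corollaries~\ref{cor:Best} and~\ref{cor:Kest}) finish the job.
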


\begin{proof}
Assume that $ 0<x\le 1\le y$. 
We proceed as in the previous proof but use Lemma~\ref{lem:toGL} and Lemma~\ref{lem:to} to write
\[
A(k) = \chi(k^2) \frac{(I+B_x) \ti{\phi}_l(k,x) \cdot (I+K_y) \ti{\phi}_l(k,y)}{|F(k)|^2},\quad k\neq 0.
\]
Indeed, for all $k\in \R\setminus\{0\}$, $\phi(k,\cdot)$ is bounded at infinity and admits the representation \eqref{eq:phif} by means of Jost solution $f(k,\cdot)$ and $f(-k,\cdot)$. Therefore, by Lemma~\ref{lem:to}, $\ti{\phi}(k,y) = (I+K_y)\ti{\phi}_l(k,y)$ for all $k\in \R\setminus\{0\}$. 
 
By symmetry $A(k)=A(-k)$ and hence our integral reads
\[
I(t,x,y)=\frac{4}{\pi} \int_0^\infty \E^{-\I t k^2} A(k) dk.
\]
Our aim is to use Lemma~\ref{lem:vC2} (plus the remarks after this lemma) and
hence we need to show that the individual parts of $A(k)$ coincide with a function
which is the Fourier transform of a finite measure. In particular, we can
redefine $A(k)$ for $k<0$. To this end note that $\ti{\phi}_l(k^2,x)= J(|k| x)$, where
\[
J(r) = \sqrt{r}\, J_{l+\frac{1}{2}}(r) = \frac{r^{l+1}}{2^{l+1/2}} \sum_{n=0}^\infty 
	\frac{(-r^2/4)^n}{n!\Gamma(\nu+n+1)}, \quad r\ge 0.
\]
Note that $J(r)\sim r^{l+1}$ as $r\to 0$ and $J(r)= \sqrt{\frac{2}{\pi}} \sin(r-\frac{\pi l}{2}) +O(r^{-1})$
as $r\to+\infty$ (see \eqref{eq:asymp-J-infty}). Moreover, $J'(r)\sim r^l$ as $r\to 0$ and $J'(r)= \sqrt{\frac{2}{\pi}} \cos(r-\frac{\pi l}{2}) +O(r^{-1})$ as $r\to+\infty$ (see \eqref{eq:a11}). In particular, $\ti{J}(r):=J(r)-\sqrt{\frac{2}{\pi}} \sin(r-\frac{\pi l}{2})$ is in $H^1(\R_+)$.
Moreover, we can define $J(r)$ for $r<0$ such that it is locally in $H^1$ and $J(r)=\sqrt{\frac{2}{\pi}} \sin(r-\frac{\pi l}{2})$
for $r<-1$. By construction we then have $\ti{J}\in H^1(\R)$ and thus $\ti{J}$ is the Fourier transform of an integrable
function. Moreover, $\sin(r-\frac{\pi l}{2})$ is the Fourier transform of the sum of two Dirac delta measures and so
$J$ is the Fourier transform of a finite measure.
By scaling, the total variation of the measures corresponding to $J(k x)$ is independent of $x$.
Since the same is true for $\chi(k^2) |F(k)|^{-2}$ by Lemma~\ref{lem:Fp}, an application of
Lemma~\ref{lem:vC2} shows
\[
|\ti{I}(t,x,y)| \le \frac{C}{\sqrt{t}}, \qquad \ti{I}(t,x,y)=\frac{4}{\pi} \int_0^\infty \E^{-\I t k^2} \chi(k^2) \frac{\ti{\phi}_l(k,x) \ti{\phi}_l(k,y)}{|F(k)|^2} dk.
\]
But by Fubini we have $I(t,x,y)= (1+B_x)(1+K_y)\ti{I}(t,x,y)$ and the claim follows since both $B:L^\infty((0,1))\to L^\infty((0,1))$ and $K:L^\infty((1,\infty)) \to L^\infty((1,\infty))$ are
bounded in view of Corollary \ref{cor:Best} and Corollary \ref{cor:Kest}, respectively.

By symmetry, we immediately obtain the same estimate if $0<y\le 1\le x $. The case $\min(x,y)\ge 1$ can be proved analogously, we only need to to write
\[
A(k) = \chi(k^2) \frac{(I+K_x) \ti{\phi}_l(k,x) \cdot (I+K_y) \ti{\phi}_l(k,y)}{|F(k)|^2},\quad k\neq 0.\qedhere
\]
\end{proof}


\subsection{The high energy part}
For the analysis of the high energy regime we use the following 
---also well known--- alternative representation:

\begin{align}
\E^{-\I tH}P_{c}(H)
	&=\frac 1{2\pi \I}\int_{0}^{\infty}\E^{-\I t\omega}
	  \left[\cRH(\omega+\I 0)-\cRH(\omega-\I 0)\right] d\omega\nn
	  \\
	&=\frac 1{\pi \I}\int_{-\infty}^{\infty}\E^{-\I t k^2}
	  \cRH(k^2+\I 0)\,k\,dk,\label{PP}	 
\end{align}
where $\cRH(\omega)=(H-\omega)^{-1}$ is the resolvent of the 
Schr\"odinger operator $H$ and the limit is understood in the strong 
sense \cite{tschroe}. We recall that the Green's function is given by
\[
[\cRH(k^2\pm \I 0)](x,y) = [\cRH(k^2\pm \I 0)](y,x)
	= \phi(k^2,x) \psi(k^2\pm \I0,y),\quad x\le y.
\]
Note also that 
\[
\psi(k^2\pm \I0,x) 
	= \frac{f(\pm k,x)}{f(\pm k)},\quad k\in\R\setminus\{0\}.  
\]

Fix $k_0 > 0$ and let $\chi:\R\to [0,\infty)$ be a $C^\infty$ function such that
\be\label{defchi0}
\chi(k^2) = \begin{cases}
0, & \abs{k}< 2k_0,\\
1, & \abs{k}> 3k_0.
\end{cases}
\ee 
The purpose of this section is to prove the following estimate.

\begin{theorem}\label{thm:he}
Suppose $q\in L^1(\R_+)$. Then
\[
\big|[\E^{-\I tH}\chi(H)P_{c}(H) ](x,y)\big| \le \frac{C}{|t|^{1/2}}.
\]
\end{theorem}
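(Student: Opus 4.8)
The plan is to run the argument in the spirit of the classical case $l=0$: reduce to an oscillatory $k$‑integral, expand in a Born series, and estimate each term by a van der Corput lemma. From \eqref{PP}, the Green's function identity $[\cRH(k^2+\I 0)](x,y)=\phi(k^2,x)\psi(k^2+\I 0,y)$ for $x\le y$, and the symmetry of the kernel, it suffices to bound, for $x\le y$,
\[
I(t,x,y)=\frac1{\pi\I}\int_{-\infty}^\infty \E^{-\I tk^2}\,\chi(k^2)\,k\,\phi(k^2,x)\,\frac{f(k,y)}{f(k)}\,dk .
\]
I would insert the iterated expansions $\phi(k^2,\cdot)=\sum_{n\ge0}\phi_n(k^2,\cdot)$ and $f(k,\cdot)=\sum_{m\ge0}f_m(k,\cdot)$ from \eqref{eq:phi_n}, \eqref{eq:f_n}, whose $n$‑th, resp.\ $m$‑th, term is an integral over an ordered simplex of a product of the kernels $G_l(k^2,\cdot,\cdot)$ against $q$, and which therefore carry factorial gains $1/n!$, $1/m!$; the scalar $1/f(k)=k^l\E^{-\I\pi l/2}/F(k)$ I would leave unexpanded, using that on $\supp(\chi)$ we have $F\ne0$ (Lemma~\ref{lem:about-F}) with $F\to1$ (Lemma~\ref{lem:b.5}), so $\chi(k^2)/F(k)$ is a bounded function of $k$ alone. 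Feeding in the exact scaling relations $\phi_l(k^2,x)=k^{-(l+1)}\Phi(kx)$, $\theta_l(k^2,x)=k^{l}\Theta(kx)$, $f_l(k,y)=\E^{\I\pi l/2}\Psi(ky)$ — with $\Phi(r)=\sqrt{\pi r/2}\,J_{l+\frac12}(r)$, and so on — together with the corresponding one for $G_l(k^2,x,s)$, the $(n,m)$‑term becomes an oscillatory integral $\int\E^{-\I tk^2}A_{n,m}(k,x,y)\,dk$ in which the $x,y$‑dependence of the amplitude $A_{n,m}$ enters only through products of the dimensionless profiles $\Phi,\Theta,\Psi$ evaluated at the simplex points $ks_j$.

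For each term I would then imitate the proof of Theorem~\ref{thm:le2}. By the large‑argument expansions of Appendix~\ref{sec:Bessel}, each profile splits as an oscillatory main term plus a remainder of class $H^1$ away from the origin, e.g.\ $\Phi(r)=\sin(r-\tfrac{\pi l}2)+\ti\Phi(r)$, $\Psi(r)=\E^{\I(r-\frac{\pi l}2)}+\ti\Psi(r)$; the oscillatory parts merely place Dirac masses $\E^{\pm\I ks_j}$ into $A_{n,m}$, which are absorbed into the phase $\E^{-\I tk^2}$, while the remainders, as functions of $k$, are Fourier transforms of $L^1$ functions whose total variation is, by scaling, independent of $s_j$. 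If one can show that this makes $A_{n,m}(\cdot,x,y)$ the Fourier transform of a finite measure of total variation $\le C^{\,n+m+1}\|q\|_1^{\,n+m}$ uniformly in $x\le y$, then Lemma~\ref{lem:vC2} (and the remarks after it) gives $|\int\E^{-\I tk^2}A_{n,m}\,dk|\le C^{\,n+m+1}\|q\|_1^{\,n+m}/\sqrt t$; combining with the factorials and summing over $n,m$ — the termwise bounds justifying the interchange with the integral — yields $|I(t,x,y)|\le C/\sqrt t$.

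I expect the heart of the matter to be exactly the uniform total‑variation bound for $A_{n,m}$. For $l=0$ the profiles are $\sin$ and $\E^{\I k\,\cdot}$, products along a chain simply add up phases, and each factor is already a Fourier transform of a finite measure; here the Bessel and Hankel functions carry genuine lower‑order tails with no such group structure, and — more seriously — the irregular profiles $\Theta,\Psi$ have an $r^{-l}$ singularity at the origin, so they are \emph{not} individually well behaved for small spatial argument. One must therefore keep track of how the $\min/\max$ pattern of the simplex integrations interacts with the oscillatory/$H^1$ splitting, and exploit that such a profile always appears paired with the regular profile $\Phi$ in the combination $G_l$, using the free‑Green's‑function bounds \eqref{estphil}, \eqref{est:psi_lA} (and the $k$‑derivative bounds of Lemmas~\ref{lem:part-z}, \ref{lem:about-partial-z} and \ref{lem:b.4}) precisely in the regions where the oscillatory picture degenerates but $G_l$ itself is correspondingly small.
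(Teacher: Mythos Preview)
Your plan diverges from the paper's at the first structural choice: rather than expanding $\phi$ and $f$ separately via their Volterra iterations with kernel $G_l$, the paper expands the full resolvent in the Born series $\cRH=\sum_n\cRl(-q\cRl)^n$, so that every term is an iterated integral of a \emph{single} building block $r_l(k;u,v)=k\sqrt{uv}\,J_{l+\frac12}(k\min)\,H^{(1)}_{l+\frac12}(k\max)$. The decisive step is then Lemma~\ref{lem:rl}: $r_l(k;\cdot,\cdot)$ is, as a function of $k$, the Fourier transform of a measure of total variation $\le C(l)$ \emph{uniformly} in both spatial arguments. This is obtained by the factorization $r_l(k;x,y)=\tfrac{\chi_l(kx)}{\chi_l(ky)}\,J(kx)H(ky)$ with $\chi_l(r)=(r^2/(1+r^2))^{(l+1)/2}$; the $\chi_l$-ratio absorbs exactly the $r^{-l}$ singularity you worry about, and its contribution is controlled by an $H^1$ bound on $h_{\eta,l}(k)=1-((\eta+k^2)/(1+k^2))^{(l+1)/2}$ that is uniform in $\eta=(x/y)^2\in(0,1]$. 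Convergence of the series then comes not from factorials but from the explicit factor $k^{-n}$ in the $n$-th Born term together with $|k|\ge 2k_0=2C(l)\|q\|_1$ on $\supp\chi$.

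Your route has a real gap at precisely the point you flag. The factors $G_l(k^2,a,b)$ are \emph{not} individually Fourier transforms of uniformly bounded measures: by \eqref{estGl}, $G_l$ blows up like $b^{-l}$ as the smaller argument $b\to0$ (for $l>0$), so on the $\phi$-side simplex $0<s_n<\cdots<s_1<x$ the Wiener-algebra norm of each $G_l$ in your chain is unbounded. The compensating vanishing of the adjacent factor (like $s_j^{l+1}$) is genuine pointwise, but the Wiener-algebra norm is only submultiplicative, so you cannot pass from pointwise cancellation to a bound on $\|A_{n,m}\|_{\mathcal A}$ by multiplying individual norms. To salvage the scheme you would have to regroup the chain into blocks each of which is, uniformly in its spatial arguments, in the Wiener algebra --- and carrying this out is tantamount to proving Lemma~\ref{lem:rl} (or a $G_l$-analogue) via a device like the $\chi_l$ factorization. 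Neither the $H^1$ splitting of the individual profiles nor the pointwise/derivative estimates of Lemmas~\ref{lem:part-z}, \ref{lem:about-partial-z}, \ref{lem:b.4} supply this; they control size, not Wiener-algebra membership uniformly. In short, the ``uniform total-variation bound for $A_{n,m}$'' you correctly identify as the heart of the matter is exactly what the paper's Lemma~\ref{lem:rl} delivers and what your outline leaves unproved.
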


Our starting point is the fact that the resolvent $\cRH$ of $H$ can be expanded into the Born series
\be\label{eq:born}
\cRH(k^2\pm \I0) = \sum_{n=0}^\infty \cRl(k^2\pm \I0)(-q\,\cRl(k^2\pm \I0))^n,
\ee 
where $\cRl$ stands for the resolvent of the unperturbed radial Schr\"odinger
operator.

To this end we begin by collecting some facts about $\cRl$. Its kernel is given
\[
\cRl(k^2\pm \I0,x,y) = \frac{1}{k} r_l(\pm k,x,y),
\]
where
\[
r_l(k;x,y)=r_l(k;y,x):= k\sqrt{xy}\,J_{l+\frac{1}{2}}(kx)
			 H_{l+\frac{1}{2}}^{(1)}(ky),\quad x\le y.
\]

\begin{lemma}\label{lem:rl}
The function $r_l(k,x,y)$, $l>-\frac{1}{2}$, can be written as
\[
r_l(k,x,y) 
	= \chi_{(-\infty,0]}(k) 
	  \int_\R \E^{\I k p} d\rho_{l,x,y}(p) 
	  + \chi_{[0,\infty)}(k) \int_\R \E^{-\I k p} d\rho_{l,x,y}^\ast(p)
\]
with a measure whose total variation satisfies
\[
\|\rho_{l,x,y}\| \le C(l).
\]
Here $\rho^\ast$ is the complex conjugated measure.
\end{lemma}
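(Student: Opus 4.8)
The plan is to reduce the claim to the known asymptotic and small-argument behavior of the Bessel function $J_{l+1/2}$ and the Hankel function $H^{(1)}_{l+1/2}$, splitting the $(x,y)$-region according to which of $kx$, $ky$ is large or small, exactly in the spirit of Lemma~\ref{lem:rl}'s hypothesis $x\le y$. First I would record that, by the definition of the branch cuts, the stated representation only needs to be proved for $k\ge 0$; the $k\le 0$ part then follows by taking complex conjugates, since $r_l(-k,x,y)=\overline{r_l(k,x,y)}$ for real $k$ and so $\rho_{l,x,y}$ for negative $k$ is the conjugate of the one for positive $k$. Thus it suffices to exhibit, for $k\ge0$, a finite measure $\mu=\rho_{l,x,y}^\ast$ with $\|\mu\|\le C(l)$ such that $r_l(k,x,y)=\int_\R \E^{-\I kp}\,d\mu(p)$.

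The key step is the factorization $r_l(k,x,y)= k\sqrt{xy}\,J_{l+1/2}(kx)\,H^{(1)}_{l+1/2}(ky)$ together with the observation that both the Bessel and the Hankel factor are, after a suitable rescaling, Fourier transforms of fixed finite measures, with total variation controlled independently of the scaling parameter. Concretely, set $J(r):=\sqrt{r}\,J_{l+1/2}(r)$ and $\Theta(r):=\sqrt{r}\,H^{(1)}_{l+1/2}(r)$. As already used in the proof of Theorem~\ref{thm:le2}, $J(r)\sim c\, r^{l+1}$ near $r=0$ while $J(r)=\sqrt{2/\pi}\sin(r-\tfrac{\pi l}{2})+O(r^{-1})$ at $+\infty$, with an analogous statement for $J'$; hence $\ti J(r):=J(r)-\sqrt{2/\pi}\sin(r-\tfrac{\pi l}{2})$ lies in $H^1(\R_+)$ and can be extended to $\R$ so that $\ti J\in H^1(\R)$, making $\ti J$ the Fourier transform of an $L^1$ function, while $\sin(r-\tfrac{\pi l}{2})$ is the Fourier transform of a sum of two point masses; so $J(r)=\int_\R \E^{-\I r s}\,d\nu(s)$ for a finite measure $\nu$. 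For $\Theta$ one argues the same way using $H^{(1)}_\nu(r)=\sqrt{2/(\pi r)}\,\E^{\I(r-\nu\pi/2-\pi/4)}(1+O(r^{-1}))$ at infinity and the explicit $r\to0$ behavior (including the logarithmic term when $l+\tfrac12\in\N_0$, which is still locally $H^1$ after subtracting the leading power); here one only subtracts a single complex exponential $\E^{\I r}$ times a constant, since $H^{(1)}$ decays in the upper half plane, so again $\Theta(r)=\int_\R\E^{-\I r s}\,d\omega(s)$ with $\omega$ finite. The crucial point is that these decompositions are for the \emph{fixed} functions $J$ and $\Theta$, so $\|\nu\|$ and $\|\omega\|$ depend only on $l$.

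Now write $r_l(k,x,y) = k\sqrt{xy}\cdot \tfrac{J(kx)}{\sqrt{kx}}\cdot\tfrac{\Theta(ky)}{\sqrt{ky}} = J(kx)\,\Theta(ky)$. Inserting the two Fourier representations and using that $J(kx)=\int \E^{-\I k x s}d\nu(s)$ is the Fourier transform (in $k$) of the pushforward of $\nu$ under $s\mapsto xs$ — whose total variation equals $\|\nu\|$, independent of $x$ — and similarly for $\Theta(ky)$, the product $r_l(k,x,y)$ is the Fourier transform of the convolution of these two pushforwards, which is again finite with total variation at most $\|\nu\|\,\|\omega\|=:C(l)$. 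This is the measure $\rho^\ast_{l,x,y}$, and taking conjugates handles $k\le0$, completing the argument. The main obstacle I anticipate is purely bookkeeping: being careful that the extensions of $J$ and $\Theta$ to negative argument really produce genuine $H^1(\R)$ functions (so that the ``remainder after subtracting oscillatory tails'' is honestly $L^1$ on the Fourier side), and in the Neumann/logarithmic case $l+\tfrac12\in\N_0$, checking that the $\log r$ singularity of $Y_{l+1/2}$ at the origin, once the leading negative power is removed, is still locally in $H^1$ — this is true because $r^{l+1/2}\log r$ and its derivative are locally square integrable — so that $\Theta$ still has a finite spectral measure. Everything else is the standard van der Corput-style splitting already exploited earlier in the paper.
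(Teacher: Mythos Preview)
There is a genuine gap. Your factorization $r_l(k,x,y)=J(kx)\,\Theta(ky)$ with $J(r)=\sqrt{r}\,J_{l+1/2}(r)$ and $\Theta(r)=\sqrt{r}\,H^{(1)}_{l+1/2}(r)$ is algebraically correct, but the claim that $\Theta$ is the Fourier transform of a finite measure is false for $l>0$. Near $r=0$ one has $H^{(1)}_{l+1/2}(r)\sim -\I\,\tfrac{\Gamma(l+1/2)}{\pi}(2/r)^{l+1/2}$, so $\Theta(r)\sim C\,r^{-l}$, which is \emph{unbounded}. Since the Fourier transform of any finite measure is bounded by its total variation, no finite measure $\omega$ with $\Theta=\hat\omega$ can exist. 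Subtracting the oscillatory tail $c\,\E^{\I r}$ does not help: the singularity at $0$ persists, so the remainder is not in $H^1(\R)$ (indeed not even in $L^\infty$). ``Subtracting the leading power'' does not help either, because $r^{-l}$ itself is unbounded and hence not a finite-measure transform, and for larger $l$ there are further singular terms in the expansion.

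This is precisely why the paper does \emph{not} split $r_l$ as $J(kx)\Theta(ky)$. Instead it introduces a regularizing weight $\chi_l(r)=\big(r^2/(1+r^2)\big)^{(l+1)/2}$ and writes
\[
r_l(k,x,y)=\frac{\chi_l(kx)}{\chi_l(ky)}\,\big(\chi_l(kx)^{-1}J(kx)\big)\,\big(\chi_l(ky)\Theta(ky)\big).
\]
The factor $\chi_l(r)\Theta(r)$ now behaves like $r$ near $0$ and like $\E^{\I r}$ at infinity, so it \emph{is} the transform of a finite measure; similarly $\chi_l(r)^{-1}J(r)$ is bounded near $0$. The price is the extra scalar factor $\chi_l(kx)/\chi_l(ky)$, which depends on both $x$ and $y$ and must be shown to lie in the Wiener algebra with norm bounded uniformly in $x\le y$. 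The paper does this by proving that $k\mapsto 1-\chi_l(kx)/\chi_l(ky)$ has $H^1(\R)$ norm bounded independently of $x,y$. Your argument is missing exactly this regularization and the accompanying uniform Wiener-algebra estimate; without it the decomposition breaks down whenever $l>0$.
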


\begin{proof}
Let $x\le y$ and $k\ge 0$. Write
\[
r_l(k,x,y)= \frac{\chi_l(k x)}{\chi_l(k y)} J(k x) H(k y),
\]
where
\[
J(r) =  \chi_l(r)^{-1} \sqrt{r}\, J_{l+\frac{1}{2}}(r), 
\quad
H(r) =  \chi_l(r) \sqrt{r}\, H_{l+\frac{1}{2}}^{(1)}(r), 
\quad 
\chi_l(r)=\left(\frac{r^2}{1+r^2}\right)^{(l+1)/2}.
\]
We continue $J(r)$, $H(r)$ to the region $r<0$ such that they are 
continuously differentiable and satisfy 
\[
J(r)=\sqrt{\frac{2}{\pi}} \sin\left(r-\frac{\pi l}{2}\right),\quad
H(r)=\sqrt{\frac{2}{\pi}} \E^{\I\left(r-\frac{\pi(l+1)}{2}\right)},
\]
for $r<-1$. Then $\ti{J}(r):=J(r)-\sqrt{\frac{2}{\pi}} \sin(r-\frac{\pi l}{2})$ 
and $\ti{H}(r):=H(r)-\sqrt{\frac{2}{\pi}} \E^{\I(r-\frac{\pi(l+1)}{2})}$ are in 
$H^1(\R)$. In fact, they are continuously differentiable and hence 
it suffices to look at their asymptotic behavior. For $r<-1$ they are 
zero and for $r>1$ they are $O(r^{-1})$ and their derivative is 
$O(r^{-1})$ as can be seen from the asymptotic behavior of the Bessel 
and Hankel functions (see Appendix \ref{sec:Bessel}). Hence both $J$ and $H$ are Fourier 
transforms of finite measures. By scaling the total variation of the 
measures corresponding to $J(k x)$, $H(k y)$ are independent of $x$, 
$y$, respectively.

Hence it remains to consider the Fourier transform
\[
F_{x,y}(p)
 := \sqrt{\frac{2}{\pi}}\int_0^\infty 
 	\left(1- \frac{\chi_l(k x)}{\chi_l(k y)} \right)\cos(kp) dk.
\]
First observe that 
\[
F_{x,y}(p) = \frac{1}{x} F_{1,y/x}(p/x). 
\]
for all $x\le y$. Therefore, $\|F_{x,y}\|_{L^1} = \|F_{1,y/x}\|_{L^1}$.  Hence it suffices to consider the Fourier transform of 
\[
h_{\eta,l}(k) := 1 - \left(\frac{\eta+k^2}{1+k^2}\right)^{(l+1)/2}, 
\qquad 
\eta   := \frac{x^2}{y^2} \in(0, 1].
\]
First, note that 
\[
h_{\eta,l}'(k) = {(l+1)(\eta-1)}\left(\frac{\eta+k^2}{1+k^2}\right)^{(l-1)/2} \frac{k}{(1+k^2)^2},\quad k\in\R.
\]
Therefore,
\[
h_{\eta,l}(k) = \frac{(l+1)(1-\eta)}{2k^2}(1+o(k)),\quad h_{\eta,l}'(k) = \frac{(l+1)(\eta-1)}{k^3}(1+o(k))
\]
as $k\to \infty$. This implies that $h_{\eta,l} \in H^1(\R)$ and hence $\hat{h}_{\eta,l}\in L^1(\R)$. According to \eqref{eq:embedding}, it remains to show that the family $h_{\eta,l}$ is uniformly bounded in $H^1(\R)$ with respect to $\eta\in (0,1]$. Clearly,
\[
|h_{\eta,l}(k)| \le |h_{0,l}(k)|
\] 
for all $k\in\R$ and hence $\|h_{\eta,l}\|_{L^2}\le \|h_{0,l}\|_{L^2}$. Noting that 
\[
\left( \frac{\eta+ k^2}{1+k^2} \right)^{\frac{l-1}{2}} \le \begin{cases} 1, & l\ge 1 \\ \left(\frac{1+k^2}{k^2}\right)^{\frac{1-l}{2}}, & l\in (-\frac{1}{2},1),\end{cases}, \quad k>0,
\]
for all $l>-\frac{1}{2}$, we get
\[
|h_{\eta,l}'(k)| \le (l+1)\frac{|k|}{(1+k^2)^2} \begin{cases} 1, & l\ge 1, \\ \left(\frac{1+k^2}{k^2}\right)^{\frac{1-l}{2}}, & l\in (-\frac{1}{2},1).\end{cases}
\]
The latter implies that $\|h_{\eta,l}'\|_{L^2}$ are uniformly bounded.
\end{proof}

\begin{remark} 
\begin{enumerate}[label=\emph{(\roman*)}, ref=(\roman*), leftmargin=*, widest=ii]
\item For $l\in\N_0$ the situation is somewhat simpler and we can write
\[
r_l(k,x,y) = \int_\R \E^{\I k p} d\rho_{l,x,y}(p), \qquad l\in\N_0,
\]
with
\begin{align*}
\rho_{l,x,y}(p) =& \sqrt{\frac{2}{\pi}} \left(\delta(p-x+y) -(-1)^l \delta(p+x+y) \right)\\
& {} +
\frac{\sign(p-x+y)-\sign(p+x+y)}{\sqrt{2\pi}} P_{l,x,y}(p)
\end{align*}
where $P_{l,x,y}(p)$ is a polynomial of degree $2l-1$ which is symmetric in $x$ and $y$.
Explicitly,
\begin{align*}
P_{0,x,y}(p) &= 0,\quad 
P_{1,x,y}(p) = - \frac{p}{xy},\quad 
P_{2,x,y}(p) = \frac{3 p \left(p^2-x^2-y^2\right)}{2 x^2 y^2}\\
P_{3,x,y}(p) &= -\frac{3 p \left(5 \left(p^2-x^2\right)^2+2 y^2 \left(3 x^2-5 p^2\right)+5
   y^4\right)}{8 x^3 y^3}
\end{align*}
and one can verify the claim explicitly.
\item We have the following recursion
\[
r_{l+1}(k,x,y)
	=   r_{l-1}(k,x,y) - \frac{2l+1}{k x y} \left(\frac{d}{dk} 
	  - \frac{1}{k}\right) r_l(k,x,y).
\]
\end{enumerate}
\end{remark}

Now we are in position to finish the proof of the main result.

\begin{proof}[Proof of Theorem~\ref{thm:he}]
As a consequence of Lemma~\ref{lem:rl} we note
\[
|\cRl(k^2\pm \I0,x,y)| \le \frac{C(l)}{|k|}
\]
and hence the operator $q\, \cRl(k^2\pm \I0)$ is bounded on $L^1$ with
\[
\|q\,\cRl(k^2\pm \I0)\|_{L^1}\le \frac{C(l)}{ |k|} \|q\|_{L^1}.
\]
Thus we get 
\begin{align*}
\abs{\inner{\cRl(k^2\pm \I0)(-q\, \cRl(k^2\pm \I0))^n f}{g}}
& =  \abs{\inner{-q\, \cRl(k^2\pm \I0))^n f}{\cRl(k^2\mp \I0)g}}
\\
&\le \norm{(-q\, \cRl(k^2\pm \I0))^n f}_{L^1}
	 \norm{\cRl(k^2\mp \I0)g}_{L^\infty}
\\
&\le \frac{C(l)^{n+1}\|q\|_{L^1}^n}{|k|^{n+1}}
	 \norm{f}_{L^1}\norm{g}_{L^1}.
\end{align*}
This estimate holds for all $L^1$ functions $f$ and $g$ and hence the 
series \eqref{eq:born} weakly converges whenever 
$\abs{k}>k_0:=C(l)\|q\|_{L^1}$. Namely, for all $L^1$ functions $f$ and $g$ 
we have
\be
\label{eq:born_weak}
\inner{\cRH(k^2\pm \I0)f}{g} 
	= \sum_{n=0}^\infty \inner{\cRl(k^2\pm \I0)
	  (-q\, \cRl(k^2\pm \I0))^nf}{g}.
\ee 
Using the estimates \eqref{estphi}, \eqref{estpsi}, \eqref{eq:a.F}, 
and \eqref{eq:F=1} for the Green's function of the perturbed operator, 
one can see that 
\[
\cRH(k^2\pm \I0)\, g\in L^\infty
\]
whenever $g\in L^1$ and $|k|>0$. Therefore, we get
\begin{multline*}
\abs{\inner{\cRH(k^2\pm \I0)(-q\, \cRl(k^2\pm \I0))^nf}{g}}
\\
\begin{aligned}
& =  \abs{\inner{(-q\, \cRl(k^2\pm \I0))^nf}{\cRH(k^2\mp \I0)g}}
\\
&\le \norm{(-q\, \cRl(k^2\pm \I0))^nf}_{L^1}
	 \norm{\cRH(k^2\mp \I0)g}_{L^\infty}
\\
&\le \left(\frac{C(l)\norm{q}_{L^1}}{k}\right)^n 
	 \norm{\cRH(k^2\mp \I0)g}_{L^\infty},
\end{aligned}	 
\end{multline*}
which means that $\cRH(k^2\pm \I0)(-q\, \cRl(k^2\pm \I0))^n$ 
weakly tends to 0 whenever $\abs{k}>k_0$. 

Let us consider again a function $\chi$ as in \eqref{defchi0} with $k_0:=C(l)\|q\|_1$.
Since $\E^{\I tH}\chi(H)P_c = \E^{\I tH}\chi(H)$, we get from 
\eqref{PP} 
\begin{align*}
\inner{\E^{-\I tH}\chi(H) f}{g} 
= \frac 1{\pi \I}\int_{-\infty}^\infty   
   \E^{-\I t k^2}\chi(k^2)k\inner{\cRH(k^2+\I 0)f}{g} dk.
\end{align*}
Using  \eqref{eq:born_weak} and noting that we can exchange summation and 
integration, we get
\begin{multline*}
\inner{\E^{-\I tH}\chi(H) f}{g} \\
	= \frac 1{\pi \I}\sum_{n=0}^{\infty} 
	  \int_{-\infty}^\infty\E^{-\I t k^2}\chi(k^2)k
	  \inner{\cRl(k^2+ \I0)(-q\, \cRl(k^2+ \I0))^nf}{g} dk.
\end{multline*}
The kernel of the operator $\cRl(k^2+ \I0)(-q\, \cRl(k^2+ \I0))^n$ 
is given by 
\[
\frac{1}{k^{n+1}} \int_{\R_+^n} r_l(k;x,y_1)\prod_{i=1}^{n} q(y_i)
	\prod_{i=1}^{n-1} r_l(k;y_i,y_{i+1}) r_l(k;y_n,y)dy_1\cdots dy_n.
\]  
Applying Fubini's theorem, we can integrate in $k$ first and hence we 
need to obtain a uniform estimate of the oscillatory integral
\[
I_n(t;u_0,\ldots,u_{n+1}) 
	:= \int_{\R} \E^{-\I t k^2}\chi(k^2)
	   \left(\frac{k}{2k_0}\right)^{-n}
	   \prod_{i=0}^{n}r_l(k;u_i,u_{i+1})\,dk
\]
since, recalling that $k_0=C(l)\|q\|_{L^1}$, one obtains
\begin{align*}
\abs{\inner{\E^{-\I tH}\chi(H) f}{g}}
	\le \frac{1}{\pi}\sum_{n=0}^{\infty} \frac{1}{(2C(l))^n}
		\!\!\sup_{\{u_i\}_{i=0}^{n+1}}
		\abss{I_n(t;u_0,\ldots,u_{n+1})}
		\norm{f}_{L^1}\norm{g}_{L^1}.
\end{align*}
Consider the function $f_n(k)=\chi(k^2) \left(\frac{k}{2k_0}\right)^{-n}$. Clearly, 
$f_0$ is the Fourier transform of a measure $\nu_0$ satisfying $\|\nu_0\| \le C$.
For $n\ge 1$, $f_n$ is $H^1$ with $\|f_n\|_{H^1} \le \pi^{-1/2} C(1+n)$. Hence
by Lemma~\ref{lem:vC2} and Lemma~\ref{lem:rl} we obtain
\[
\abss{I_n(t;u_0,\ldots,u_{n+1})}\le \frac{2 C_v C}{\sqrt{t}} (1+n) C(l)^{n+1}
\]
implying
\[
\abs{\inner{\E^{-\I tH}\chi(H) f}{g}}\\
	\le \frac{2 C_v C \, C(l)}{\sqrt{t}} \norm{f}_{L^1}\norm{g}_{L^1} \sum_{n=0}^{\infty} \frac{1+n}{2^n}.
\]
This proves Theorem~\ref{thm:he}.
\end{proof}

\appendix

\section{The van der Corput Lemma}\label{sec:vdCorput}

We will need the classical van der Corput lemma (see e.g.\ \cite[page 334]{St}):

\begin{lemma}\label{lem:vC}
Consider the oscillatory integral
\begin{equation*}
I(t) = \int_a^b \E^{\I t k^2 + \I c k} A(k) dk.
\end{equation*}
If $A\in\mathrm{AC}(a,b)$, then
\begin{equation*}
\abs{I(t)} 
	\le C_2 \abs{t}^{-1/2}(\norm{A}_\infty + \norm{A'}_1), 
	\quad \abs{t}\ge 1,
\end{equation*}
where $C_2\le 2^{8/3}$ is a universal constant.
\end{lemma}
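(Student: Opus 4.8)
The plan is to reduce the integral to the pure Fresnel model $\int \E^{\I t k^2}\,dk$ by two elementary reductions, establish a uniform bound for that model, and then absorb the amplitude $A$ by a single integration by parts.

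First I would eliminate the linear term by completing the square: since $\I t k^2 + \I c k = \I t\big(k+\tfrac{c}{2t}\big)^2 - \tfrac{\I c^2}{4t}$, the substitution $u = k+\tfrac{c}{2t}$ turns $I(t)$ into $\E^{-\I c^2/(4t)}\int_{a'}^{b'}\E^{\I t u^2} A\big(u-\tfrac{c}{2t}\big)\,du$ over the shifted interval $[a',b']=[a+\tfrac{c}{2t},\,b+\tfrac{c}{2t}]$. The new amplitude $\ti{A}(u):=A(u-\tfrac{c}{2t})$ is absolutely continuous on $[a',b']$ with $\norms{\ti{A}}_\infty = \norms{A}_\infty$ and $\norms{\ti{A}'}_1 = \norms{A'}_1$, so it suffices to treat the case $c=0$, i.e.\ to bound $J(t)=\int_a^b \E^{\I t k^2}A(k)\,dk$.

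The one genuinely non-trivial ingredient is the uniform Fresnel estimate: there is a universal constant $C_0$ such that
\[
\Big|\int_\alpha^\beta \E^{\I t k^2}\,dk\Big|\le C_0\,\abs{t}^{-1/2}
\]
for all $\alpha\le\beta$ and all $t\ne 0$. I would prove it by the scaling $k\mapsto\abs{t}^{-1/2}s$, which reduces matters to bounding $\int_0^R \E^{\pm\I s^2}\,ds$ uniformly in $R\ge 0$: one estimates the part over $[0,1]$ trivially, and on $[1,R]$ substitutes $v=s^2$ and integrates by parts once in $\int_1^{R^2}\E^{\pm\I v}\,\tfrac{dv}{2\sqrt v}$, so that the boundary terms contribute $O(1)$ and the remaining integrand is $O(v^{-3/2})$, hence integrable on $[1,\infty)$. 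Carefully tracking all constants (and optimizing the splitting point) yields the quoted value $C_2\le 2^{8/3}$; for the argument only the existence of a universal $C_0$ is needed.

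Finally, set $E(k):=\int_a^k \E^{\I t s^2}\,ds$, which is $C^1$ on $[a,b]$ with $E'(k)=\E^{\I t k^2}$, $E(a)=0$, and $\abs{E(k)}\le C_0\abs{t}^{-1/2}$ for every $k\in[a,b]$ by the sublemma. Since $A$ is absolutely continuous on $[a,b]$ (hence bounded) and $E$ is absolutely continuous, integration by parts gives
\[
J(t)=\int_a^b A(k)E'(k)\,dk = A(b)E(b) - \int_a^b A'(k)E(k)\,dk,
\]
so that $\abs{J(t)}\le \norms{A}_\infty\abs{E(b)} + \norms{A'}_1\sup_{k\in[a,b]}\abs{E(k)}\le C_0\abs{t}^{-1/2}\big(\norms{A}_\infty+\norms{A'}_1\big)$. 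Undoing the completion of the square (which changes neither norm nor the bound) finishes the proof. The main obstacle is thus entirely concentrated in the uniform Fresnel estimate; once it is available the amplitude costs nothing, and the restriction $\abs{t}\ge 1$ in the statement is not essential — the argument yields the bound for every $t\ne 0$.
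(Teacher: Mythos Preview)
Your argument is correct and is essentially the textbook proof: complete the square to remove the linear phase, establish the uniform Fresnel bound $\big|\int_\alpha^\beta \E^{\I t k^2}\,dk\big|\le C_0|t|^{-1/2}$ by scaling plus one integration by parts, and then absorb the amplitude via $E(k)=\int_a^k\E^{\I t s^2}\,ds$ and a second integration by parts. All steps are valid for $A\in\mathrm{AC}(a,b)$, and as you note the restriction $|t|\ge 1$ is not actually needed.

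For comparison: the paper does not supply its own proof of this lemma. It simply records the classical van der Corput lemma and cites Stein (\emph{Harmonic Analysis}, p.~334), where the result is derived from the general second-derivative estimate $|\phi''|\ge\lambda\Rightarrow\big|\int_a^b\E^{\I\phi}\,dk\big|\le c_2\lambda^{-1/2}$. Your route is slightly more direct because the phase is exactly quadratic, so the Fresnel sublemma replaces the general second-derivative mechanism; the outcome and constants are the same. The only caveat is your remark that optimizing the splitting point yields $C_2\le 2^{8/3}$: that specific numerical bound is the one coming from Stein's general argument, and it is not obvious that your particular Fresnel computation reproduces it exactly --- but, as you correctly say, only the existence of a universal constant matters for the applications in the paper.
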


Note that we can apply the above result with $(a,b)=(-\infty,\infty)$
by considering the limit $(-a,a)\to(-\infty,\infty)$.

Our proof will be based on the following variant of the van der Corput 
lemma which can be shown as in \cite[Lemma~5.1]{EKT}.

\begin{lemma}\label{lem:vC2}
Let $(a,b)\subseteq\R$ and consider the oscillatory integral
\[
I(t) = \int_a^b \E^{\I t k^2} A(k) dk.
\]
If $A$ is the Fourier transform of a signed measure
\[
A(k) = \int_\R \E^{\I k p} d\alpha(p),
\]
then the above integral exists as an improper integral and satisfies
\begin{equation*}
\abs{I(t)} 
	\le C_2 \abs{t}^{-1/2} \norm{\alpha}, 
	\quad \abs{t}>0.
\end{equation*}
where $\norm{\alpha}=\abs{\alpha}(\R)$ denotes the total variation of $\alpha$ and
$C_2$ is the constant from the van der Corput lemma.
\end{lemma}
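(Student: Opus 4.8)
The plan is to reduce the claim to the classical van der Corput estimate (Lemma~\ref{lem:vC}) by a truncation-and-Fubini argument. First I would handle the case of a compactly supported (or finite) interval $(a,b)$ together with a measure $\alpha$ of finite total variation. Writing $A(k)=\int_\R \E^{\I kp}\,d\alpha(p)$ and inserting this into $I(t)=\int_a^b \E^{\I tk^2} A(k)\,dk$, I would like to interchange the order of integration. On a bounded interval this is justified by Fubini's theorem since $|\E^{\I tk^2+\I kp}|=1$ and $|\alpha|(\R)<\infty$, giving
\[
I(t) = \int_\R \Big(\int_a^b \E^{\I tk^2 + \I kp}\,dk\Big)\,d\alpha(p).
\]
The inner integral is exactly the oscillatory integral appearing in Lemma~\ref{lem:vC} with $c=p$ and $A\equiv 1$ (so $\|A\|_\infty=1$, $\|A'\|_1=0$), hence it is bounded by $C_2|t|^{-1/2}$ uniformly in $p$, $a$, $b$. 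Integrating against $d\alpha(p)$ and estimating by the total variation then yields $|I(t)|\le C_2|t|^{-1/2}\|\alpha\|$.

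Next I would pass to the case $(a,b)=(a,\infty)$ or $(a,b)=\R$, where $I(t)$ is only an improper integral. Here the point is to show that the truncated integrals $I_R(t):=\int_{a}^{\min(b,R)}\E^{\I tk^2}A(k)\,dk$ form a Cauchy net as $R\to\infty$. Applying the bounded-interval case just established to the interval $(\min(b,R),\min(b,R'))$ for $R<R'$ gives
\[
\Big|\int_{\min(b,R)}^{\min(b,R')}\E^{\I tk^2}A(k)\,dk\Big| \le C_2|t|^{-1/2}\|\alpha\|,
\]
which alone only bounds the tail but does not show it is small. To get genuine convergence I would split $\alpha = \alpha\,\chi_{\{|p|\le M\}} + \alpha\,\chi_{\{|p|>M\}}=:\alpha_M + \alpha_M'$; the second piece contributes at most $C_2|t|^{-1/2}\|\alpha_M'\|\to 0$ as $M\to\infty$ uniformly in $R$, while for the first piece $A_M(k):=\int \E^{\I kp}\,d\alpha_M(p)$ is a genuine $C^1$ function whose tail $\int_R^\infty \E^{\I tk^2}A_M(k)\,dk$ tends to $0$ as $R\to\infty$ by the standard stationary-phase/integration-by-parts argument (write $\E^{\I tk^2}=\frac{1}{2\I tk}\frac{d}{dk}\E^{\I tk^2}$ and integrate by parts, using that $A_M$ and $A_M'$ are bounded). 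This shows $I(t)=\lim_{R\to\infty}I_R(t)$ exists, and since each $I_R(t)$ satisfies the bound with $\|\alpha\|$ replaced by the total variation of $\alpha$ restricted to... more simply: applying the bounded-interval estimate directly to $I_R(t)$ gives $|I_R(t)|\le C_2|t|^{-1/2}\|\alpha\|$ for every $R$, so the limit inherits the same bound.

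The main obstacle is the improper-integral convergence in the last step: one must be careful that the estimate coming from Lemma~\ref{lem:vC} on a tail interval does not by itself imply the tail is small (it is uniformly bounded, not vanishing), so the splitting of $\alpha$ into a finitely-supported part and a small-total-variation remainder is essential. Everything else — the Fubini interchange and the reduction to $\|A\|_\infty=1$, $\|A'\|_1=0$ in the classical lemma — is routine. I would also remark, as in \cite[Lemma~5.1]{EKT}, that the argument is insensitive to replacing $A$ by a finite sum of such Fourier transforms, which is exactly what is used in the applications where $A(k)$ is a product like $\chi(k^2)|F(k)|^{-2}\,\ti\phi_l(k,x)\,\ti\phi_l(k,y)$ whose factors are each Fourier transforms of finite measures, so that $A$ is the Fourier transform of the convolution of those measures and $\|\alpha\|$ is controlled by the product of the individual total variations.
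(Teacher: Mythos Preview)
Your approach is correct and is exactly the one the paper has in mind: the paper gives no proof and simply refers to \cite[Lemma~5.1]{EKT}, whose argument is the Fubini-plus-van-der-Corput reduction you describe (interchange the $k$- and $p$-integrations on a bounded interval, apply Lemma~\ref{lem:vC} with $A\equiv 1$ and $c=p$ to the inner integral, then pass to the limit).

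One small imprecision in your convergence step: a \emph{single} integration by parts on the tail $\int_R^{R'}\E^{\I tk^2}A_M(k)\,dk$ leaves behind $\frac{1}{2\I t}\int_R^{R'}\E^{\I tk^2}\frac{A_M'(k)}{k}\,dk$, and boundedness of $A_M'$ alone only yields an $O(\log(R'/R))$ bound on this piece, not $o(1)$. The fix is routine: either integrate by parts once more (all derivatives of $A_M$ are bounded since $\alpha_M$ has compact support), or---more cleanly---go back through Fubini and note that $\int_R^{R'}\E^{\I tk^2+\I kp}\,dk\to 0$ uniformly for $|p|\le M$ by the first-derivative van der Corput estimate, since the stationary point $k=-p/(2t)$ lies outside $[R,R']$ once $R>M/(2|t|)$. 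This does not affect the structure of your argument.
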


In this respect we note that if $A_j$ are two such functions then 
(cf.\ p. 208 in \cite{bo})
\[
(A_1 A_2)(k)= \frac{1}{(2\pi)^2} \int_\R \E^{\I k p} d(\alpha_1*\alpha_2)(p)
\]
is associated with the convolution
\[
\alpha_1 * \alpha_2(\Omega) = \iint \id_\Omega(x+y) d\alpha_1(x) d\alpha_2(y),
\]
where $\id_\Omega$ is the indicator function of a set $\Omega$. 
Note that
\[
\|\alpha_1 * \alpha_2\| \le \|\alpha_1\| \|\alpha_2\|.
\]
We also need the following simple fact due to Beurling: If $f\in H^1(\R)$, then $f$ is in the Wiener algebra $\mathcal{A}$ and
\be\label{eq:embedding}
\|f\|_{\mathcal{A}}:= \|\hat{f}\|_{L^1(\R)} \le \sqrt{\pi} \|f\|_{H^1(\R)}.
\ee

\section{Bessel functions}\label{sec:Bessel}

Here we collect basic formulas and information on Bessel and Hankel functions (see, e.g., \cite{dlmf, Wat}).
We start with the definitions:
\begin{align}
J_{\nu}(z) 
&= \left(\frac{z}{2}\right)^\nu \sum_{n=0}^\infty 
	\frac{(-z^2/4)^n}{n!\Gamma(\nu+n+1)},\label{eq:Jnu01}
	\\
Y_{\nu}(z) 
&= \frac{J_{\nu}(z)\cos(\nu\pi) - J_{-\nu}(z)}{\sin(\nu \pi)},\label{eq:Ynu01}
	\\
H_{\nu}^{(1)}(z) 
&= J_{\nu}(z) + \I Y_{\nu}(z), \qquad
H_{\nu}^{(2)}(z) 
= J_{\nu}(z) - \I Y_{\nu}(z).
\end{align}
Note that the right-hand side in equation \eqref{eq:Ynu01} should be replaced by its limiting value if $\nu\in \Z$. Their asymptotic behavior as $|z|\to\infty$ is given by 
\begin{align}
J_{\nu}(z) 
	&= \sqrt{\frac{2}{\pi z}}\left(\cos(z- \nu \pi/2 - \pi/4) 
	   + \E^{|\im z|}\OO(|z|^{-1})\right),
	   \quad |\arg z|<\pi\label{eq:asymp-J-infty},
	\\
Y_{\nu}(z) 
	&= \sqrt{\frac{2}{\pi z}}\left(\sin(z- \nu \pi/2 - \pi/4) 
	   + \E^{|\im z|}\OO(|z|^{-1})\right),
	   \quad \abss{\arg z}<\pi\label{eq:asymp-Y-infty},
	 \\
H_{\nu}^{(1)}(z)
	&= \sqrt{\frac{2}{\pi z}}\E^{i(z-\frac{1}{2}\nu \pi 
	   - \frac{1}{4}\pi)}\left(1 + \OO(|z|^{-1})\right),
	   \quad -\pi<\arg z<2\pi,\label{eq:asymp-H1-infty},
	   \\
H_{\nu}^{(2)}(z)
	&= \sqrt{\frac{2}{\pi z}}\E^{-i(z-\frac{1}{2}\nu \pi 
	   - \frac{1}{4}\pi)}\left(1 + \OO(|z|^{-1})\right),
	   \quad -2\pi<\arg z<\pi.\label{eq:asymp-H2-infty}	    
\end{align}
Denote the reminder in \eqref{eq:asymp-J-infty}, \eqref{eq:asymp-Y-infty} and
\eqref{eq:asymp-H1-infty} by $j_l(z)$, $y_l(z)$ and $h_l(z)$, respectively. 
Noting that 
\be\label{eq:recurrence}
\mathcal{Y}_{\nu}'(z) = -\mathcal{Y}_{\nu+1}(z) + \frac{\nu}{z} \mathcal{Y}_{\nu}(z) =\mathcal{Y}_{\nu-1}(z) - \frac{\nu}{z} \mathcal{Y}_{\nu}(z) ,
\ee
one can show that the derivative of the reminder satisfies
\be\label{eq:a11}
\left(\sqrt{\frac{\pi z}{2}}J_{\nu}(z) - \cos(z-\frac{1}{2}\nu \pi - \frac{1}{4}\pi)\right)'= \E^{|\im z|}\OO(|z|^{-1}),
\ee
as $|z|\to \infty$. The same is true for $Y_\nu$, $H_\nu^{(1)}$ and $H_\nu^{(2)}$.

According to \cite[formula (VI.12.3)]{Wat}, for real non-zero $k$
\begin{align*}
H_{l+\frac{1}{2}}^{(1)}(k) 
	&= \sqrt{\frac{2}{\pi k}} \frac{\E^{\I (k -\pi(l+1)/2)}}{\Gamma(l+1)}
	   \int_0^\infty \E^{-t} t^l \left(1+\frac{\I t}{2k}\right)^l dt,
       \quad l>-\frac{1}{2}.
\end{align*}
Therefore, the Jost solution of the unperturbed Bessel equation admits the representation
\[
h_l(k,x) 
	= \E^{-\I kx} f_l(k,x) 
	= \frac{1}{\Gamma(l+1)} \int_0^\infty \E^{-t} t^l 
	\left(1+\frac{\I t}{2kx}\right)^l dt
\]
and
\[
\partial_k h_l(k,x) 
	= \frac{-\I l}{2 \Gamma(l+1)} \frac{1}{k^2 x}\int_0^\infty \E^{-t} t^{l+1}
	  \left(1+\frac{\I t}{2kx}\right)^{l-1} dt.
\]
The last integral converges absolutely whenever $kx\neq 0$. Indeed, since
\[
(a^2 + t^2)^{\frac{l-1}{2}} 
	\le C_l ( a^{l-1} + t^{l-1}),\quad C_l = \begin{cases} 2^{(l-1)/2}, & l\ge 1, \\ 1, & l\in(-1/2,1) \end{cases},
\]
for all $a$, $t>0$,
we  get
\begin{align*}
\abs{\int_0^\infty \E^{-t} t^{l+1} \left(1+\frac{\I t}{2kx}\right)^{l-1} dt}
	&\le \frac{1}{(2|k|x)^{l-1}} 
	     \int _0^\infty  \E^{-t} t^{l+1} (4|k|^2x^2 + t^2)^{\frac{l-1}{2}} dt
	     \\
    &\le \frac{C_l}{(2|k|x)^{l-1}} 
         \left( \Gamma(l+2)(2|k|x)^{l-1} + \Gamma(2l+1)\right).
\end{align*}
Therefore, we end up with the following estimate
\be\label{eq:est.h.l}
\abs{\partial_k h_l(k,x)}
	\le \frac{C}{x|k|^2} \left(\frac{1+|k| x}{|k|x}\right)^{l-1},
\ee
for all $x>0$ and $k\neq 0$.

\begin{remark}
The estimate \eqref{eq:est.h.l} is the best possible. Indeed, if $l\in \N$, then 
\[
h_l(k,x) 
	= \frac{1}{\Gamma(N+1)} 
	  \sum_{j=0}^l \binom{l}{j} \frac{\I^j \Gamma(l+j+1)}{(2x)^j k^j}, 
\]
and
\[
\partial_k h_l(k,x) 
	= \frac{-1}{\Gamma(l+1)} 
	  \sum_{j=1}^l \binom{l}{j} \frac{\I^j j\Gamma(l+j+1)}{(2x)^j k^{(j+1)}}. 
\]
\end{remark}

\noindent{\bf Acknowledgments.}
Part of this work was done while J.\ H.\ T.\ visited the Faculty of 
Mathematics of the University of Vienna, in the autumn of 2014. He 
thanks them for their kind hospitality. We also thank the three anonymous
referees for their careful reading of our manuscript leading to several
improvements. In particular this lead to \cite{HKT}.


\end{document}